\documentclass[11pt]{amsart}
\usepackage{amsmath}
\usepackage{amssymb, latexsym, amsfonts, enumitem, tikz, fancyhdr, bm, mathrsfs}
\usepackage{xcolor}
\usepackage{todonotes}
\usepackage[normalem]{ulem}
\usepackage{fullpage}

\newtheorem{thm}{Theorem}[section]
\newtheorem{prop}[thm]{Proposition}
\newtheorem{lem}[thm]{Lemma}
\newtheorem{defn}[thm]{Definition}

\newtheorem{rem}[thm]{Remark}
\newtheorem{cor}[thm]{Corollary}

\renewcommand{\leq}{\leqslant}
\renewcommand{\geq}{\geqslant}

\newcommand{\diag}{\textup{diag}}

\numberwithin{equation}{section}

\begin{document}

\title{Extreme points, positive Grothendieck constants\\ and tensor product norms}
\author[Gupta]{Rajeev Gupta}
\author[Mal]{Arpita Mal}
\author[Misra]{Gadadhar Misra}
\author[Ray]{Samya Kumar Ray}

\address[Gupta]{School of Mathematics and Computer Science, Indian Institute of Technology Goa, Goa - 403401}
\email[Gupta]{\text rajeev@iitgoa.ac.in}

\address[Ray]{The Institute of Mathematical Sciences, 4th Cross Street, CIT Campus, Tharamani, Chennai, Tamil Nadu 600113, India, and Homi Bhabha National Institute, Training School Complex, Anushakti Nagar, Mumbai 400094, India} 
\email[Ray]{\text samya@imsc.res.in}

\address[Mal]{Dhirubhai Ambani University, Gandhinagar, India}
\email[Mal]{\text arpita\_mal@dau.ac.in}

\address[Misra]{Indian Statistical Institute, Bangalore, India and Indian Institute of Technology, Gandhinagar, India}
\email[Misra]{\text gm@isibang.ac.in}

\keywords{Extreme points, Projective and injective tensor products, Property~P, Property~Q}
\subjclass{Primary 46B07;Secondary 47A67,52A10,52A15}
\begin{abstract}
We study several interrelated problems arising from the interplay between extreme point theory, Grothendieck-type inequalities, and tensor product norms. We develop a general framework for characterizing the extreme points of the set of positive contractions $\mathcal{A}_{X\to Y}$ between finite-dimensional Banach spaces, with explicit results for $X=\ell_1^n$, $Y=\ell_\infty^n$ and vice versa. These characterizations are applied to evaluate several constants exactly. We show that the positive Grothendieck constant $K_G^{+,\mathbb{R}}(3)$ equals $9/8$ and that the smallest constant $\rho^{+}(X)$ for which $\|A\|_\pi \leqslant \rho^{+}(X)\|A\|_\epsilon$ holds for all $A \geqslant 0$ equals $5/4$ when $X=\ell^3_\infty(\mathbb{R})$.
We also prove that $\rho^+(X)=1$ when $X=\ell_\infty^n(\mathbb{C})$ and $n\leqslant 3$. Finally, we prove that $\rho^+(X) = 1$ for every 2-dimensional subspace $X$ of $\ell^3_\infty(\mathbb{C})$; since this is stronger than the 2-summing property, 
it recovers Proposition~4.4 of \cite{AFJS95}.
\end{abstract}

\maketitle
\setcounter{section}{1}
\section*{Introduction}
Let $\Omega \subset \mathbb C^m$ be a bounded domain and let 
$H^\infty(\Omega)$ denote the algebra of bounded holomorphic functions on $\Omega$. 
For an $m$-tuple $V=(V_1,\dots,V_m)$ with $V_j \in M_{n_1,n_2}(\mathbb C)$,
the associated \emph{Parrott homomorphism} is defined by
\[
\rho_V(f)=
\begin{pmatrix}
f(w)I_{n_1} &
\displaystyle \sum_{j=1}^m
\frac{\partial f}{\partial z_j}(w)V_j \\[1ex]
0 & f(w)I_{n_2}
\end{pmatrix},
\qquad f\in H^\infty(\Omega),\; w\in\Omega .
\]

The $m$-tuple $V=(V_1,\dots,V_m)$ also defines a linear map $L_V:E^* \to M_{n_1,n_2}(\mathbb{C}), L_V(z_1,\dots,z_m) = \sum_{j=1}^m z_jV_j$, where $M_{n_1,n_2}(\mathbb{C})$ denotes the space of $n_1\times n_2$ complex matrices equipped with the usual operator norm.  Building on earlier results from \cite{MS, GM}, it was established in \cite{vip}  that the contractivity (respectively, complete contractivity) of the homomorphism $\rho_V$ is equivalent to the contractivity (respectively, complete contractivity) of the linear map $L_V$. This reveals a deep interplay between Arveson's boundary normal dilations over $\Omega$ and the operator space structure of Banach spaces. 
Moreover, as observed in \cite{BM}, many problems involving the homomorphisms $\rho_V$ can be reduced to equivalent questions in Banach space geometry. This led to the introduction of Banach spaces with Property $P$ and Property $Q$ in \cite{BM}. Let $X$ be a finite dimensional Banach space. 
Property~P asks that $\langle A, B\rangle \leqslant\|A\|_\epsilon\|B\|_\epsilon$ for all non-negative definite $A \in X^* \otimes X^*$ and all non-negative definite $B \in X \otimes X$. The quantitative form of Property P is the constant $\gamma^+(X)$ of Definition \ref{defn:propP}; for $X=\ell_\infty^n$ it equals the positive Grothendieck constant $K_G^{+,\mathbb{R}}(n)$. Similarly, the constant $\rho^+(X)$ is the quantitative form of Property~Q which asks when the injective and projective tensor norms agree on the non-negative definite elements of $X \otimes X$. 
 
This paper studies several interrelated problems arising from the interplay between extreme point theory, Grothendieck-type inequalities, and the structure of operator spaces. The unifying theme is the characterization of extreme points of various convex sets of matrices --- positive contractions, correlation matrices, and tuples of contractions subject to coupled norm constraints --- and the application of these characterizations to compute or estimate constants of fundamental importance in operator space theory.

In Section~\ref{sec:AXY} we develop a general framework for characterizing extreme points of the set $\mathcal{A}^{(n)}_{X\to Y}(\mathbb F)$ 
of non-negative definite  matrices $T$ satisfying $\|T\|_{X\to Y}\leq 1$, where $X$ and $Y$ are $n$-dimensional Banach spaces. The general theory is then specialized to the cases $X=\ell_1^n$, $Y=\ell_\infty^n$, where we obtain general results for the complex and the real settings, see Theorem \ref{th-complete} and Theorem \ref{th-realcomplete}, respectively. Describing the extreme points of $\mathcal{A}^{(n)}_{X\to Y}(\mathbb F)$, where $X=\ell_\infty^n$ and $Y=\ell_1^n$, is more difficult. Irrespective of the ground field, Theorem~\ref{th-2dim} describes these for $n=2$. Over the real field, Theorem~\ref{thm-rank2-inftyto1} gives a necessary condition 
for extremality when $n=3$, and Theorem~\ref{thm-normattain} provides another.

In Section~\ref{sec:KG}, we apply the extreme point analysis of $\mathcal{A}_{1\to\infty}$ and $\mathcal{A}_{\infty\to 1}$ over the real field to show that the positive Grothendieck constant $K_G^{+,\mathbb{R}}(3)=9/8$ (Theorem \ref{thm-KG}). The extreme point analysis of Section~\ref{sec:KG} also underlies the computation of $\rho^+(\ell_\infty^3(\mathbb{R}))=5/4$ (Theorem \ref{thm-Q}) in Section \ref{sec:propertyQ-constant}. 

Section~\ref{sec:propertyQ} is devoted to the study of Property~Q. We prove that $\ell_{\infty}^n(\mathbb{C})$ has Property~Q 
for $n \leqslant 3$. In \cite{BM}, Property P is shown to be equivalent to the 2-summing property. If $X$ has Property~Q, then 
for any non-negative definite $A \in X^* \otimes X^*$ and $B \in X \otimes X$, the duality between the injective and projective 
norms gives $\langle A, B\rangle \leqslant\|A\|_\epsilon\|B\|_\pi= \|A\|_\epsilon\|B\|_\epsilon$, so Property~Q implies 
Property~P.  Since \cite[Example 2.3]{AFJS95} proves that the $2$-summing property fails for $\ell_\infty^n(\mathbb{C})$ 
when $n>3$, it follows that $\ell_{\infty}^n(\mathbb{C})$ has Property~Q if and only if $n \leqslant 3$.

Our proof also resolves a gap in the argument of \cite[Fact~7]{BM} asserting that $\ell_\infty^3(\mathbb C)$ has Property~Q. The proof in \cite{BM} proceeds by considering extreme points of the set of non-negative matrices of injective norm at most $1$ and assumes that all the diagonal entries of every such extreme point are  equal to $1$. However, Corollary~\ref{cor-complex-n3} shows that this assumption is false. Consequently, the argument given in \cite{BM} is incomplete. The difficulty can be overcome by passing between the set of non-negative matrices of injective norm at most $1$ and the set of correlation matrices. Using this correspondence, together with the rank constraint for extreme correlation matrices established in \cite[Theorem~3]{CV}, we obtain a complete proof of Property~Q for $\ell_\infty^3(\mathbb C)$. This approach avoids the auxiliary calculus lemma \cite[Lemma~2.2]{BM} used in the earlier proof.

In Subsection~\ref{sec:propertyQ-normA}, we prove that $\left(\mathbb{C}^2,\|\cdot\|_{\boldsymbol A}\right)$, where $\left\|\left(z_1, z_2\right)\right\|_{\boldsymbol A}:= \left\|z_1 A_1+z_2 A_2\right\|$ for a fixed but arbitrary pair of $3 \times 3$ diagonal matrices $A_1, A_2$, has Property~$\mathrm{Q}$. It then follows that every $2$-dimensional subspace of $\ell_{\infty}^3$ has Property $Q$ (Corollary \ref{cor:2dimPropQ}). Since Property~Q is stronger than Property~P, or equivalently the $2$-summing property, this recovers the result of \cite[Proposition~4.4]{AFJS95} asserting that every $2$-dimensional subspace of $\ell_\infty^3$ has the $2$-summing property.

\section{Extreme points of the set $\mathcal{A}_{X\to Y}$}\label{sec:AXY}

Let $X$ be a finite-dimensional vector space and $C\subseteq X$ be a convex subset. Recall that an open line segment is a set of the form
\[
(x, y) := \{ t x + (1 - t)y : 0 < t < 1 \},
\]
and the segment is said to be proper if $x \neq y$.
\begin{defn}
Let $C$ be a convex subset of a vector space $X$. A point $u \in C$ is called an {extreme point} of $C$ if there is no proper open line segment that contains $u$ and lies entirely in $C$. We let $\operatorname{Ext}(C)$ denote the set of all extreme points of $C$.
\end{defn}
For any two Banach spaces $X$ and $Y$ we denote $B(X,Y)$ to be all bounded linear maps from $X$ to $Y.$ Suppose $X, Y$ are $n$-dimensional Banach spaces over $\mathbb{F}$, where $\mathbb{F}=\mathbb{R}$ or $\mathbb{C}$. By fixing some basis $\mathcal{B}_X$ and $\mathcal{B}_Y$ for $X$ and $Y$ respectively we may identify $X$ and $Y$ with $(\mathbb F^n,\|\cdot\|_X)$ and $(\mathbb{F}^n,\|\cdot\|_Y)$ respectively. 
Then, we identify a linear transformation $T:X\to Y$ with an element in $ M_{n}(\mathbb{F})$ and vice versa. Let us define
\[
\mathcal{A}_{X\to Y}^{(n)}(\mathbb F):=\{T\in M_{n}(\mathbb{F}): T\geq 0,\, \|T\|_{(\mathbb F^n,\|\cdot\|_X)\to (\mathbb F^n,\|\cdot\|_Y)}\leq 1\}.
\] 
Whenever, it is clear from the context we write  $\|T\|$ in place of  $\|T\|_{(\mathbb F^n,\|\cdot\|_X)\to (\mathbb F^n,\|\cdot\|_Y)}.$ 
Moreover, we will drop the argument $\mathbb F$ from $\mathcal{A}_{X\to Y}^{(n)}(\mathbb F)$ whenever the underlying field is clear from the context. 
Suppose $T\in \mathcal{A}_{X\to Y}^{(n)}(\mathbb F)$ and $\operatorname{rank}(T)=r$. Let $\lambda_1\geq \lambda_2\geq\cdots\geq \lambda_r$ be the non-zero eigenvalues of $T$. Set
\begin{equation}\label{notationforGamma}
\tilde{\Gamma}=\diag(\lambda_1,\ldots,\lambda_r) \quad\text{and}\quad \Gamma= \begin{bmatrix}
		\tilde{\Gamma} & 0 \\
		0& 0
	\end{bmatrix}\in M_n(\mathbb{R}),
\end{equation}
  where $\diag(\lambda_1,\ldots,\lambda_r)$ denotes the $r\times r$ diagonal matrix with diagonal entries $\lambda_1,\ldots,\lambda_r.$
Then there exists a unitary matrix $A\in M_{n}(\mathbb{F})$ such that $T=A\Gamma A^*$. Write $T=(t_{ij})_{i,j=1}^n$ and $A=(a_{ij})_{i,j=1}^n$.

\begin{lem}\label{lem-g01}
Suppose $T\in\mathcal{A}_{X\to Y}^{(n)}(\mathbb{F})$ and $\operatorname{rank}(T)=r$. Then the following are equivalent.

\noindent (i) $T$ is not an extreme contraction.

\noindent (ii) There exists a non-zero self-adjoint matrix $\tilde{C}\in M_{r}(\mathbb{F})$ such that 
\[
\tilde{\Gamma}\pm \tilde{C}\geq 0 \quad\text{and}\quad\|T\pm C\|_{X\to Y}\leq 1,
\]
where $C=A\begin{bmatrix} \tilde{C} & 0 \\ 0& 0 \end{bmatrix}A^*$.
\end{lem}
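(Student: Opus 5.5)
The plan is to prove the two implications separately, using the unitary change of variables $S\mapsto A^*SA$. This map preserves self-adjointness and positivity, and it carries $T$ to $\Gamma$. "Not an extreme contraction" is read here as "not an extreme point of $\mathcal{A}_{X\to Y}^{(n)}(\mathbb F)$".

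\emph{(ii)$\Rightarrow$(i).} Given $\tilde C$, the matrix $C$ is nonzero because $A$ is invertible, and it is self-adjoint. Moreover
\[
T\pm C=A\begin{bmatrix}\tilde\Gamma\pm\tilde C&0\\0&0\end{bmatrix}A^*\geq 0 ,
\]
and $\|T\pm C\|\leq 1$ by hypothesis. Hence $T\pm C\in\mathcal{A}_{X\to Y}^{(n)}$, and $T=\tfrac12(T+C)+\tfrac12(T-C)$ lies on a proper open segment inside the set. So $T$ is not extreme.

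\emph{(i)$\Rightarrow$(ii).} Suppose $T=tT_1+(1-t)T_2$ with $T_1\neq T_2$ in $\mathcal{A}_{X\to Y}^{(n)}$ and $0<t<1$. First I will symmetrize. Set $s=\min(t,1-t)>0$ and $D=s(T_1-T_2)$. We have $T+D=(t+s)T_1+(1-t-s)T_2$, and $T-D$ is a similar combination. Both coefficient pairs are nonnegative and sum to $1$, so $T\pm D$ are convex combinations of $T_1,T_2$ and therefore lie in $\mathcal{A}_{X\to Y}^{(n)}$, which is convex (intersection of the positive cone with a norm ball). Also $D\neq 0$ and $D$ is self-adjoint.

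The key step is to show that $D$ is supported on the range of $T$. Write $A^*DA=\begin{bmatrix}D_{11}&D_{12}\\D_{12}^*&D_{22}\end{bmatrix}$ in the $r+(n-r)$ block decomposition. From $\Gamma\pm A^*DA\geq 0$ and the zero lower-right block of $\Gamma$, we get $\pm D_{22}\geq 0$, hence $D_{22}=0$. A positive semidefinite matrix whose diagonal block vanishes must have vanishing off-diagonal block in that row and column. Applying this to $\Gamma+A^*DA$ gives $D_{12}=0$.

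Therefore $D=A\begin{bmatrix}\tilde C&0\\0&0\end{bmatrix}A^*$ with $\tilde C=D_{11}$, which is nonzero and self-adjoint. In addition $\tilde\Gamma\pm\tilde C\geq 0$ as compressions, and $\|T\pm C\|\leq 1$ with $C=D$. This gives (ii).

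The only step with real content is this support argument (the vanishing of $D_{12}$). It follows from the standard fact that if $\begin{bmatrix}P&Q\\Q^*&0\end{bmatrix}\geq 0$ then $Q=0$. To see this, test the form on vectors $(x,\lambda y)$ and let $\lambda$ vary, using that $\mathbb F=\mathbb R$ or $\mathbb C$.
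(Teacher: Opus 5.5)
Your proof is correct and follows the same route as the paper. For the forward direction you conjugate by the unitary $A$ and use $\Gamma\pm A^*CA\geq 0$ to force the last $n-r$ rows and columns to vanish; for the converse you use the decomposition $T=\frac12(T+C)+\frac12(T-C)$. The differences are only cosmetic: you derive the symmetric perturbation $T\pm D$ explicitly from an arbitrary convex combination (the paper simply recalls this fact), and you read off self-adjointness of $\tilde C$ directly from that of $D$ rather than from $\tilde C=\frac12(\tilde\Gamma+\tilde C)-\frac12(\tilde\Gamma-\tilde C)$.
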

\begin{proof}
$(i)\Rightarrow (ii)$. 
Recall that a point $u$ in a convex set $C$ is not an extreme point of $C$ if and only if there exist a nonzero vector $v$ and some $t_0>0$ such that
\[
u \pm t v \in C \quad \text{for all} \quad 0<t \leq t_0.
\]
Suppose $T$ is not an extreme contraction. Then there exists a non-zero matrix $C\in M_{n}(\mathbb{F})$ such that
\[
T=\frac{1}{2}(T+C)+\frac{1}{2}(T-C),
\]
where $T\pm C\geq 0$ and $\|T\pm C\|_{X\to Y} \leq 1$. Since $T\pm C\geq 0$, conjugating by $A^*$ gives $\Gamma \pm A^*CA \geq 0$. In particular, the diagonal entries satisfy $(\Gamma \pm A^*CA)_{ii} \geq 0$ for all $1\leq i\leq n$. For $r+1\leq i\leq n$, the diagonal entry $\Gamma_{ii}=0$, so we must have $\pm (A^*CA)_{ii} \geq 0$, which forces  $(\Gamma \pm A^*CA)_{ii} = 0$ for $r+1\leq i\leq n.$ 
Moreover, as $\Gamma \pm A^*CA$ is non-negative definite, all entries in the corresponding row and column must vanish: $(A^*CA)_{ij}= (A^*CA)_{ji} = 0$ for $r+1\leq i\leq n$ and $1\leq j\leq n$. Hence we have that $A^*CA=\begin{bmatrix} \tilde{C} & 0 \\ 0& 0 \end{bmatrix}$ and equivalently $C=A\begin{bmatrix} \tilde{C} & 0 \\ 0& 0 \end{bmatrix}A^*$ for some $\tilde{C}\in M_{r}(\mathbb{F})$. Clearly, $C\neq 0$ implies $\tilde{C}\neq 0$. Moreover, $\Gamma \pm A^*CA\geq 0$ implies $\tilde{\Gamma}\pm \tilde{C}\geq 0$. Thus $\tilde{C}=\frac{1}{2}(\tilde{\Gamma}+\tilde{C})-\frac{1}{2}(\tilde{\Gamma}-\tilde{C})$. Hence $\tilde{C}$ is self-adjoint.

$(ii)\Rightarrow (i)$. Suppose there exists $\tilde{C}\in M_{r}(\mathbb{F})$ satisfying the conditions of $(ii)$. Note that $T=\frac{1}{2}(T+C)+\frac{1}{2}(T-C)$. Since $\tilde{\Gamma}\pm \tilde{C}\geq 0,$ it follows that $T\pm C\geq 0.$  Therefore $T$ is not an extreme contraction. This completes the proof of the lemma.
\end{proof}

\begin{cor}\label{cor-g01}
Suppose $T\in\mathcal{A}^{(n)}_{X\to Y}(\mathbb F)$ and $\operatorname{rank}(T)=r$. Then the following are equivalent.

\noindent (i) $T$ is an extreme contraction.

\noindent (ii) The set
\begin{equation*}\label{eq-g1}
\Big\{\tilde{C}\in M_r(\mathbb{F}):\tilde{C}^*=\tilde{C},\, \tilde{\Gamma}\pm \tilde{C}\geq 0,\, \|T\pm C\|_{X\to Y} \leq 1,\, C=A\begin{bmatrix} \tilde{C} & 0 \\ 0& 0 \end{bmatrix}A^*\Big\}
\end{equation*}
is the singleton set $\{0\}$.
\end{cor}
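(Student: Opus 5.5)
This corollary is the contrapositive form of Lemma~\ref{lem-g01}, so the plan is to obtain it directly from that lemma by negating both sides of the equivalence there. The only thing to arrange is that the set in (ii) always contains $0$. Then ``the set is $\{0\}$'' means exactly ``the set contains no non-zero element''.

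First, I check that $0$ belongs to the set. Take $\tilde{C}=0$. It is self-adjoint, and then $C=0$. We have $\tilde{\Gamma}\pm 0=\tilde{\Gamma}\geq 0$, because its diagonal entries $\lambda_1,\ldots,\lambda_r$ are positive eigenvalues of $T\geq 0$. Also $\|T\pm 0\|_{X\to Y}=\|T\|_{X\to Y}\leq 1$ since $T\in\mathcal{A}^{(n)}_{X\to Y}(\mathbb F)$. Hence the set is non-empty and contains $0$, and it equals $\{0\}$ if and only if it has no non-zero element.

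Next, I observe that the set having a non-zero element is, word for word, condition (ii) of Lemma~\ref{lem-g01}: there is a non-zero self-adjoint $\tilde{C}\in M_r(\mathbb F)$ with $\tilde{\Gamma}\pm\tilde{C}\geq 0$ and $\|T\pm C\|_{X\to Y}\leq 1$, where $C=A\begin{bmatrix}\tilde{C}&0\\0&0\end{bmatrix}A^*$. By the lemma this holds if and only if $T$ is not an extreme contraction, meaning not an extreme point of $\mathcal{A}^{(n)}_{X\to Y}(\mathbb F)$. Negating both sides gives: $T$ is extreme if and only if the set contains no non-zero element, which by the first step happens if and only if the set is $\{0\}$.

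There is no real obstacle here. The only point needing a moment's care is membership of $0$, which uses the standing assumption $\|T\|_{X\to Y}\leq 1$ and the positivity of the $\lambda_i$. One should also note that the unitary $A$ and $\tilde{\Gamma}$ are the same fixed ones as in the lemma, so the two conditions are literally identical.
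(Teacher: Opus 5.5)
Your proof is correct and matches the paper's intent: the corollary is stated without proof as the contrapositive of Lemma~\ref{lem-g01}. Your check that $0$ always lies in the set (from $\tilde{\Gamma}\geq 0$ and $\|T\|_{X\to Y}\leq 1$) is the one detail needed to pass from ``no non-zero element'' to ``equals $\{0\}$''.
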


\begin{prop}\label{prop-g1}
Let $T\in \mathcal{A}_{X\to Y}^{(n)}(\mathbb F)$ be such that $0<\|T\|<1$. Then $T$ is not an extreme contraction.
\end{prop}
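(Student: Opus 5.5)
The plan is to exhibit $T$ as the midpoint of a proper segment in $\mathcal{A}^{(n)}_{X\to Y}(\mathbb F)$, using scalar multiples of $T$ itself. Since $0<\|T\|<1$, we have $T\neq 0$. We can therefore choose $\varepsilon>0$ small enough that $(1+\varepsilon)\|T\|\leq 1$ and $\varepsilon<1$; for instance, take $\varepsilon=\min\{1/2,\,(1-\|T\|)/\|T\|\}$. Then we write
\[
T=\tfrac12\big((1+\varepsilon)T\big)+\tfrac12\big((1-\varepsilon)T\big).
\]

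We check the two conditions for membership. Both $(1\pm\varepsilon)T$ are non-negative definite, because $T\geq 0$ and $1\pm\varepsilon>0$. Their norms are $(1\pm\varepsilon)\|T\|\leq(1+\varepsilon)\|T\|\leq 1$. So both points lie in $\mathcal{A}^{(n)}_{X\to Y}(\mathbb F)$. The two points differ by $2\varepsilon T\neq 0$, so the segment is proper, and $T$ is not extreme.

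The same argument can be phrased through Lemma~\ref{lem-g01}. Take $\tilde C=\varepsilon\tilde\Gamma$, which is non-zero and self-adjoint. Then $\tilde\Gamma\pm\tilde C=(1\pm\varepsilon)\tilde\Gamma\geq 0$, and the corresponding $C$ equals $\varepsilon T$, so $\|T\pm C\|\leq 1$. There is no real obstacle here; the only points needing care are that $\|T\|>0$ guarantees $C\neq 0$, and that $\varepsilon<1$ keeps $(1-\varepsilon)T$ positive.
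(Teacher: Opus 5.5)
Your proposal is correct and is essentially the paper's argument: the paper also perturbs along $C=cT$ with $1\pm c>0$ and $(1\pm c)\|T\|<1$, so that $T=\tfrac12(1+c)T+\tfrac12(1-c)T$ is a proper convex decomposition inside $\mathcal{A}^{(n)}_{X\to Y}(\mathbb F)$. Your explicit choice $\varepsilon=\min\{1/2,(1-\|T\|)/\|T\|\}$, giving the non-strict bound $(1+\varepsilon)\|T\|\leq 1$, works equally well.
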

\begin{proof}
 Clearly, $\big(1-\frac{1}{\|T\|}\big)<0$. Therefore, there exists a scalar $c>0$ such that $1\pm c>0$ and $\big(1-\frac{1}{\|T\|}\big)< \pm c$.  Thus,
\[
1-\frac{1}{\|T\|}<\pm {c}\quad \Rightarrow \quad 1\pm{c}<\frac{1}{\|T\|}\quad \Rightarrow \quad 0< ({1\pm c})\|T\|<1.
\]
Consider the matrix $C={c}T$. Then $C$ is non-zero and
\[
\|T\pm C\|_{X \to Y}=\|(1\pm{c})T\|_{X\to Y}= ({1\pm c})\|T\|_{X\to Y}<1.
\]
Moreover, $T\pm C=({1\pm c}) T\geq 0$. Since $T=\frac{1}{2}(T+C)+\frac{1}{2}(T-C)$, it follows that $T$ is not an extreme contraction.
\end{proof}

\begin{thm}\label{th-grank1}
Suppose $T\in \mathcal{A}_{X\to Y}^{(n)}(\mathbb{F})$ and $\operatorname{rank}(T)=1$. Then the following are equivalent.
\begin{itemize}
    \item[(i)]  $T$ is an extreme contraction.
\item[(ii)] $\|T\|_{X\to Y}=1$.
\end{itemize}
\end{thm}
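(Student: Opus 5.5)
The plan is to prove the two implications separately, leaning on Proposition~\ref{prop-g1} and Corollary~\ref{cor-g01}.

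\emph{(i)$\Rightarrow$(ii).} Since $\operatorname{rank}(T)=1$, we have $T\neq 0$ and so $\|T\|>0$. If $\|T\|<1$, then Proposition~\ref{prop-g1} says $T$ is not an extreme contraction. Hence an extreme $T$ must satisfy $\|T\|_{X\to Y}=1$.

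\emph{(ii)$\Rightarrow$(i).} Assume $\|T\|=1$ and apply Corollary~\ref{cor-g01} with $r=1$. Then $\tilde\Gamma=(\lambda_1)$ with $\lambda_1>0$, and a self-adjoint $\tilde C\in M_1(\mathbb F)$ is just a real scalar $c$. The condition $\tilde\Gamma\pm\tilde C\geq 0$ means $|c|\leq\lambda_1$. Writing $a$ for the first column of $A$, we have $T=\lambda_1 aa^*$ and
\[
C=A\begin{bmatrix} c & 0\\ 0&0\end{bmatrix}A^*=c\,aa^*=\frac{c}{\lambda_1}T .
\]
So every admissible perturbation is a real multiple of $T$ itself. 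This is the key observation, and it is immediate in rank one.

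It follows that $T\pm C=(1\pm c/\lambda_1)T$. Since $\|T\|=1$, we get
\[
\|T\pm C\|=|1\pm c/\lambda_1| .
\]
Requiring both quantities to be at most $1$, and noting $1\pm c/\lambda_1\geq 0$, gives $1+c/\lambda_1\leq 1$ and $1-c/\lambda_1\leq 1$. Together these force $c=0$. Thus the set in Corollary~\ref{cor-g01} is $\{0\}$, and $T$ is extreme.

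There is no real obstacle here. The only point needing care is the reduction of $C$ to a scalar multiple of $T$, which follows directly from the block form given by Lemma~\ref{lem-g01}.
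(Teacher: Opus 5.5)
Your proof is correct and follows the paper's own argument. For (i)$\Rightarrow$(ii) you use Proposition~\ref{prop-g1}; for (ii)$\Rightarrow$(i) you observe that in rank one the perturbation from Lemma~\ref{lem-g01} is $C=\frac{c}{\lambda_1}T$, so $\|T\pm C\|=(1\pm c/\lambda_1)\|T\|\le 1$ forces $c=0$. The only cosmetic difference is that you argue directly through Corollary~\ref{cor-g01}, whereas the paper argues by contradiction via Lemma~\ref{lem-g01}.
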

\begin{proof}
$(i)\Rightarrow (ii)$ follows from Proposition~\ref{prop-g1}.

$(ii)\Rightarrow (i)$. Suppose, if possible, that $T$ is not an extreme contraction. Then by Lemma~\ref{lem-g01}, since $\operatorname{rank}(T)=1$, there exists a non-zero real scalar $c$ such that $\lambda_1\pm c\geq 0$ and $\|T\pm C\|_{X\to Y}\leq 1$, where
$C=A\begin{bmatrix} c & 0 \\ 0& 0 \end{bmatrix}A^*$ with notations from \eqref{notationforGamma}.
Now,
\[
\frac{\lambda_1\pm c}{\lambda_1}\|T\|_{X\to Y}=\Big\|\frac{\lambda_1\pm c}{\lambda_1}T\Big\|_{X\to Y}=\|T\pm C\|_{X\to Y}\leq 1,
\]
which shows that $\lambda_1\pm c\leq \lambda_1$, that is, $c=0$. This contradiction proves that $T$ must be an extreme contraction.
\end{proof}

\subsection{The case $X=\ell_1^n$, $Y=\ell_\infty^n$}

In this section, all the matrix computations are with respect to the canonical basis of $\mathbb{F}^n.$ 
We now characterize the extreme contractions of $\mathcal{A}_{\ell_1^n\to \ell_\infty^n}^{(n)}(\mathbb F)$, denoted by $\mathcal{A}_{1\to \infty}^{(n)}({\mathbb{F}})$ for brevity. If $T=(t_{ij})\in \mathcal{A}_{1\to\infty}^{(n)}({\mathbb{F}})$, then, using Cauchy-Schwarz inequality,  $\|T\|_{1\to\infty}=\max\{t_{ii}: 1\leq i\leq n\}$. Therefore, by Corollary~\ref{cor-g01}, if $\operatorname{rank}(T)=r$, then $T$ is an extreme contraction of $\mathcal{A}_{1\to\infty}^{(n)}({\mathbb{F}})$ if and only if the set
\begin{equation}\label{eq-n1}
\Big\{\tilde{C}\in M_r(\mathbb{F}):\tilde{C}^*=\tilde{C},\, \tilde{\Gamma}\pm \tilde{C}\geq 0,\, 0\leq (T\pm C)_{ii} \leq 1\;\forall\;1\leq i\leq n \Big\},
\end{equation}
where $C=A\begin{bmatrix} \tilde{C} & 0 \\ 0& 0 \end{bmatrix}A^*$, 
is the singleton set $\{0\}$.

\begin{cor}
Suppose $T\in \mathcal{A}_{1\to\infty}^{(n)}({\mathbb{F}})$ is such that $|t_{ij}|=1$ for all $1\leq i,j\leq n$. Then $T$ is an extreme contraction.
\end{cor}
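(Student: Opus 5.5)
The plan is to show that such a $T$ has rank one and norm one, and then apply Theorem~\ref{th-grank1}.

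First, the norm. Since $T\geq 0$, its diagonal entries are non-negative reals, and $|t_{ii}|=1$ gives $t_{ii}=1$ for every $i$. By the formula $\|T\|_{1\to\infty}=\max_i t_{ii}$ noted above, $\|T\|_{1\to\infty}=1$.

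Next, the rank. For each pair $i\neq j$, the principal $2\times 2$ submatrix
\[
\begin{bmatrix} 1 & t_{ij}\\ \overline{t_{ij}} & 1\end{bmatrix}
\]
is non-negative definite with determinant $1-|t_{ij}|^2=0$. I would turn this into rank one by writing $T$ as a Gram matrix, $t_{ij}=\langle v_j,v_i\rangle$, with unit vectors $v_i$ (since $t_{ii}=1$). The condition $|\langle v_j,v_i\rangle|=1=\|v_i\|\|v_j\|$ is equality in Cauchy--Schwarz, so $v_j=\omega_{j}v_1$ with $|\omega_j|=1$ for each $j$. Hence $t_{ij}=\omega_j\overline{\omega_i}$, that is, $T=\omega\omega^*$ with $\omega=(\omega_1,\dots,\omega_n)^{T}$, and $T$ has rank one. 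The same argument works over $\mathbb{R}$, where $\omega_j=\pm1$.

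Finally, Theorem~\ref{th-grank1} applies, since $T$ has rank one and $\|T\|_{1\to\infty}=1$. So $T$ is an extreme contraction.

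The only step needing care is the rank-one deduction, i.e.\ passing from the local $2\times2$ condition to a global factorization. The Gram-vector argument handles this cleanly, so I expect no real obstacle.
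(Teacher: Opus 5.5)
Your proof is correct and follows the paper's route: write $T$ as a Gram matrix, use the vanishing $2\times 2$ principal minors to force all Gram vectors onto a single line so that $\operatorname{rank}(T)=1$, and conclude with Theorem~\ref{th-grank1}. The differences are minor: you replace the paper's citation of Horn--Johnson with a direct Cauchy--Schwarz equality argument, and you explicitly check that $t_{ii}=1$, so $\|T\|_{1\to\infty}=1$, which is a hypothesis of Theorem~\ref{th-grank1} that the paper leaves implicit.
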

\begin{proof}
Since $|t_{ij}| = 1$ for all $1 \leq i,j \leq n$, every $2 \times 2$ principal minor of $T$ vanishes. Let $v_1,\dots,v_n$ be such that $T = (\langle v_i, v_j \rangle)_{i,j=1}^n.$
By \cite[Theorem~7.2.10]{HJ} the vanishing of all $2 \times 2$ principal minors implies that the vectors $v_1,\dots,v_n$ are pairwise linearly dependent, and hence
$\dim \operatorname{span}\{v_i : 1 \leq i \leq n\} = 1.$
Therefore, again by \cite[Theorem~7.2.10]{HJ} $\operatorname{rank}(T) = 1$. The result now follows from Theorem~\ref{th-grank1}.
\end{proof}

We now focus our attention for $\mathbb F=\mathbb C.$ Suppose $\tilde{C}=(c_{ij})$ is an element of the set given in \eqref{eq-n1}. Without loss of generality, suppose that $t_{ii}=1$ for $1\leq i\leq m$ and $t_{ii}<1$ for $m+1\leq i\leq n$. Clearly, $(C)_{ii}=0$ for all $1\leq i\leq m$. Or, equivalently 
\begin{equation}\label{eq-02}
\sum_{p=1}^r |a_{ip}|^2 c_{pp}+\sum_{1\leq p<l\leq r} \big[2\Re\{a_{ip}\overline{a}_{il}\}\Re\{c_{pl}\}-2\Im\{a_{ip}\overline{a}_{il}\}\Im\{c_{pl}\}\big]=0
\end{equation}
for all $1\leq i\leq m$. This is a system of $m$ linear equations with $r^2$ real variables and real coefficients. It defines a linear operator from a real vector space $V$ of dimension $r^2$ to $\mathbb{R}^{m}$, where
\[
V=\big\{C'=(\ldots,c_{jj},\ldots,\Re(c_{pl}), \Im(c_{pl}),\ldots): c_{jj},\Re (c_{pl}),\Im(c_{pl})\in \mathbb{R},\; 1\leq j\leq r,\, 1\leq p<l\leq r\big\}.
\]
Define a linear operator $L:V\to\mathbb{R}^{m}$ by
\[
L(C')_{s} =\sum_{p=1}^r |a_{sp}|^2 c_{pp}+\sum_{1\leq p<l\leq r} \big[2\Re\{a_{sp}\overline{a}_{sl}\}\Re\{c_{pl}\}-2\Im\{a_{sp}\overline{a}_{sl}\}\Im\{c_{pl}\}\big]
\]
for $1\leq s\leq m$. If $\ker(L)=\{0\}$, then $T$ is an extreme contraction.

\begin{rem}\label{remark-real}
If we consider real scalars instead of complex scalars, then the system of equations \eqref{eq-02} involves $m$ equations and $\frac{r^2+r}{2}$ variables. The real vector space $V$ has dimension $\frac{r^2+r}{2}$, and the operator $L:V\to\mathbb{R}^{m}$ is defined by
\[
L(C')_{s} =\sum_{p=1}^r |a_{sp}|^2 c_{pp}+\sum_{1\leq p<l\leq r} 2a_{sp}{a}_{sl}\, c_{pl}
\]
for $1\leq s\leq m$. As in the complex case, $\ker(L)=\{0\}$ implies that $T$ is an extreme contraction.
\end{rem}

\begin{thm}\label{th-complete}
 Suppose $T\in \mathcal{A}_{1\to\infty}^{(n)}({\mathbb{C}})$ and $\operatorname{rank}(T)=r$. Suppose that $t_{ii}=1$ for $1\leq i\leq m$ and $t_{ii}<1$ for $m+1\leq i\leq n$. Then the following are equivalent.

\noindent (i) $T$ is an extreme contraction.

\noindent (ii) $\operatorname{rank}(L)=r^2$, where $L$ is as above.

\noindent (iii) $\operatorname{rank}(\tilde{A})=r^2$, where $\tilde{A}=[P\ Q\ R]\in M_{m\times r^2}(\mathbb{R})$ with $P\in M_{m\times r}$ and $Q, R\in M_{m\times \frac{r(r-1)}{2}}$ with $s$-th column of $P$ is $(|a_{1s}|^2, \ldots, |a_{ms}|^2)^T$ for each $1\leq s \leq r$ whereas $(p,l)$-th column of $Q$ and $R$ are $(2\Re\{a_{1p}\overline{a}_{1l}\},\ldots, 2\Re\{a_{mp}\overline{a}_{ml}\})^T$ and $(-2\Im\{a_{1p}\overline{a}_{1l},\ldots, -2\Im\{a_{mp}\overline{a}_{ml}\})^T$ respectively for each $1\leq p<l\leq r.$ Thus, $\tilde{A}$ is of the form
\[
\tilde{A}=\begin{bmatrix}
\ldots&	|a_{1s}|^2&\ldots &2\Re\{a_{1p}\overline{a}_{1l}\}&-2\Im\{a_{1p}\overline{a}_{1l}\}&\ldots\\
\ldots &	|a_{2s}|^2 &\ldots&2\Re\{a_{2p}\overline{a}_{2l}\}&-2\Im\{a_{2p}\overline{a}_{2l}\}&\ldots\\
\vdots&	\vdots & \vdots &\vdots & \vdots &\vdots\\
\ldots&	|a_{ms}|^2 &\ldots&2\Re\{a_{mp}\overline{a}_{ml}\}&-2\Im\{a_{mp}\overline{a}_{ml}\}&\ldots
\end{bmatrix}.
\]
\end{thm}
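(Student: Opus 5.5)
The plan is to prove (ii)$\Leftrightarrow$(iii) by linear algebra, (ii)$\Rightarrow$(i) from the remark preceding the theorem, and (i)$\Rightarrow$(ii) by contraposition, building a perturbation from a nonzero kernel element of $L$.

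\textbf{(ii)$\Leftrightarrow$(iii).} Identify $V$ with $\mathbb{R}^{r^2}$ via the coordinates $(c_{jj},\Re c_{pl},\Im c_{pl})$. In these coordinates, $\tilde A$ is exactly the matrix of $L$ with respect to the standard bases, because row $s$ of $\tilde A$ lists the coefficients in the formula for $L(C')_s$. Hence $\operatorname{rank}(L)=\operatorname{rank}(\tilde A)$. Since $\dim V=r^2$, the condition $\operatorname{rank}(L)=r^2$ is equivalent to $\ker L=\{0\}$.

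\textbf{(ii)$\Rightarrow$(i).} This is the observation made just before the statement. I would only verify one computation: for $C=A\,(\tilde C\oplus 0)\,A^*$ with $\tilde C$ self-adjoint, the diagonal entry is
\[
(C)_{ii}=\sum_{p,l\le r}a_{ip}c_{pl}\overline{a}_{il}.
\]
Using $c_{lp}=\overline{c_{pl}}$, this equals the left-hand side of \eqref{eq-02}. So every element of the set in \eqref{eq-n1} lies in $\ker L$. If $\ker L=\{0\}$, that set is $\{0\}$, and Corollary~\ref{cor-g01} gives extremality.

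\textbf{(i)$\Rightarrow$(ii).} Argue by contraposition. Suppose $\operatorname{rank}(L)<r^2$, and pick a nonzero $C'\in\ker L$. Let $\tilde C\in M_r(\mathbb{C})$ be the nonzero self-adjoint matrix with these entries, so that $(C)_{ii}=0$ for $1\le i\le m$. I would show that $\varepsilon\tilde C$ lies in the set \eqref{eq-n1} for small $\varepsilon>0$, which contradicts extremality. This needs three checks.

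\begin{itemize}
\item Since $\tilde\Gamma=\diag(\lambda_1,\dots,\lambda_r)$ with all $\lambda_j>0$, it is positive definite. Hence $\tilde\Gamma\pm\varepsilon\tilde C\ge 0$ for $\varepsilon\|\tilde C\|\le\lambda_r$.
\item Consequently $T\pm\varepsilon C=A\big((\tilde\Gamma\pm\varepsilon\tilde C)\oplus 0\big)A^*\ge 0$. In particular every diagonal entry is nonnegative, so no separate lower-bound check is needed even when some $t_{ii}=0$.
\item For $i\le m$ we have $(T\pm\varepsilon C)_{ii}=1$. For $i>m$ we have $t_{ii}<1$ strictly, so $(T\pm\varepsilon C)_{ii}\le 1$ once $\varepsilon\max_i|(C)_{ii}|\le \min_{i>m}(1-t_{ii})$.
\end{itemize}

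Because $\|S\|_{1\to\infty}=\max_i s_{ii}$ for $S\ge0$, the matrices $T\pm\varepsilon C$ lie in $\mathcal{A}^{(n)}_{1\to\infty}$. Thus $T$ is not extreme, by Lemma~\ref{lem-g01} or directly from the definition.

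\textbf{Main obstacle.} There is no real difficulty here. The only delicate points are the identification of $(C)_{ii}$ with the expression in \eqref{eq-02}, which uses self-adjointness of $\tilde C$, and the choice of $\varepsilon$ above. The degenerate cases are consistent: $m=0$ gives $L=0$, and $r=0$ ($T=0$) makes both sides hold trivially.
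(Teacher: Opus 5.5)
Your proposal is correct and follows essentially the same route as the paper. You prove (ii)$\Leftrightarrow$(iii) by recognizing $\tilde A$ as the matrix of $L$, (ii)$\Rightarrow$(i) via $\ker L=\{0\}$ and Corollary~\ref{cor-g01}, and (i)$\Rightarrow$(ii) by scaling a nonzero kernel element so that $\tilde\Gamma\pm\varepsilon\tilde C\geq 0$ while all diagonal entries stay at most $1$. Your explicit choice of $\varepsilon$ (via $\lambda_r$ and $\min_{i>m}(1-t_{ii})$) and your check that $(C)_{ii}$ equals the left side of \eqref{eq-02} make precise steps the paper leaves implicit.
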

\begin{proof}
$(ii)\Rightarrow (i)$. Since $\operatorname{rank}(L)=r^2$ implies $\ker{L}=\{0\}$, the previous discussion gives that $T$ is an extreme contraction.

$(i)\Rightarrow (ii)$. Suppose $T$ is an extreme contraction and, 
seeking a contradiction, suppose $\operatorname{rank}(L)\neq r^2$. 
Then $\ker(L)$ contains a non-zero element, which corresponds to a 
non-zero self-adjoint matrix $\tilde{C}\in M_r(\mathbb{C})$ such that 
$C=A\begin{bmatrix} \tilde{C} & 0 \\ 0 & 0 \end{bmatrix}A^*$ 
satisfies $(C)_{ii}=0$ for all $1\leq i\leq m$.

Choose $\delta>0$ (to be adjusted) so that the diagonal entries of 
$T\pm\delta C$ do not exceed~$1$: if $m=n$ this holds for all 
$\delta$ since $(C)_{ii}=0$ for every~$i$; if $m<n$ it holds for 
$\delta$ small enough by continuity, since $t_{ii}<1$ for $m+1\leq 
i\leq n$.

It remains to ensure $T\pm\delta C\geq 0$, which is equivalent to 
$\tilde{\Gamma}\pm\delta\tilde{C}\geq 0$. Since diagonal elements of $\tilde{\Gamma}$ are all strictly positive, we can choose $\delta>0$ such that $T\pm\delta C\geq 0$ and $\|T\pm\delta 
C\|_{1\to\infty}\leq 1$, so $T$ is not extreme by 
Lemma~\ref{lem-g01} --- a contradiction.

$(ii)\Leftrightarrow (iii)$. This follows since the matrix associated with the operator $L$ is $\tilde{A}$.
\end{proof}

\begin{cor}\label{cor-02}
Suppose $T\in \mathcal{A}_{1\to\infty}^{(n)}({\mathbb{C}})$ and $\operatorname{rank}(T)=r$. If $T$ is an extreme contraction, then $r^2\leq m$, where $m$ is the number of diagonal entries of $T$ equal to $1$.
\end{cor}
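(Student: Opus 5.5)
This follows directly from Theorem~\ref{th-complete}, after reordering coordinates so that the diagonal entries equal to $1$ come first. A simultaneous permutation $T\mapsto PTP^{*}$ by a permutation matrix $P$ preserves non-negativity. It also preserves the norm $\|\cdot\|_{1\to\infty}$, since that norm equals the largest diagonal entry. Hence $T\mapsto PTP^*$ is an affine bijection of $\mathcal{A}_{1\to\infty}^{(n)}(\mathbb{C})$ onto itself, so it maps extreme points to extreme points. It also preserves both the rank and the number $m$ of diagonal entries equal to $1$. We may therefore assume $t_{ii}=1$ for $1\leq i\leq m$ and $t_{ii}<1$ for $m+1\leq i\leq n$, which is exactly the setting of Theorem~\ref{th-complete}.

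If $T$ is extreme, then by (i)$\Rightarrow$(iii) of Theorem~\ref{th-complete} the real matrix $\tilde{A}\in M_{m\times r^2}(\mathbb{R})$ has rank $r^2$. The rank of a matrix is at most its number of rows, so $r^2=\operatorname{rank}(\tilde A)\leq m$. Equivalently, the real-linear map $L:V\to\mathbb{R}^m$ is injective on a space of real dimension $r^2$, which forces $r^2\leq m$.

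The only point needing care is the degenerate case $r=0$, i.e.\ $T=0$. Here the claim $0\leq m$ is trivial, but $T=0$ is in fact extreme, and Lemma~\ref{lem-g01} with $r=0$ is vacuous, so nothing more is required. The case $m=0$ with $r\geq 1$ is also consistent. Then $\|T\|<1$, so either Proposition~\ref{prop-g1} shows directly that $T$ is not extreme, or $L$ maps into $\mathbb{R}^0$ and has nonzero kernel, and Theorem~\ref{th-complete} gives the same conclusion.

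I expect no real obstacle beyond checking that the permutation reduction respects all the hypotheses.
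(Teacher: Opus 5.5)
Your proof is correct and is essentially the paper's argument: you invoke (i)$\Rightarrow$(iii) of Theorem~\ref{th-complete} and bound $\operatorname{rank}(\tilde{A})$ by the number $m$ of rows of $\tilde{A}\in M_{m\times r^2}(\mathbb{R})$. Your permutation reduction and the degenerate-case checks ($r=0$, and $m=0$ with $r\geq 1$) simply spell out the ``without loss of generality'' ordering of the diagonal entries that the paper leaves implicit.
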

\begin{proof}
From Theorem~\ref{th-complete}, $T$ is an extreme contraction if and only if $\operatorname{rank}(\tilde{A})=r^2$, where $\tilde{A}\in M_{m,r^2}(\mathbb{R})$. Since $\operatorname{rank}(\tilde{A})\leq m$, if $T$ is an extreme contraction then $r^2\leq m$.
\end{proof}

Combining Theorem~\ref{th-grank1} and Corollary~\ref{cor-02} gives the following explicit characterization for small $n$.

\begin{cor}\label{cor-complex-n3}
 Suppose $T\in \mathcal{A}_{1\to\infty}^{(3)}({\mathbb{C}})$. Then $T$ is an extreme contraction if and only if either (i) $T=0$, or (ii) $\operatorname{rank}(T)=1$ and at least one diagonal entry of $T$ equals $1$.
\end{cor}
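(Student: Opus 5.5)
The plan is to split according to the rank $r$ of $T\in\mathcal{A}_{1\to\infty}^{(3)}(\mathbb{C})$, using Theorem~\ref{th-grank1} for $r=1$ and Corollary~\ref{cor-02} to rule out $r\geq 2$.

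\emph{Rank $0$.} $T=0$ is extreme. Indeed, if $0=\frac12(T_1+T_2)$ with $T_1,T_2\geq 0$, then $T_1=-T_2\leq 0$. Hence $T_1=T_2=0$, and so $0\in\operatorname{Ext}(\mathcal{A}_{1\to\infty}^{(3)})$.

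\emph{Rank $1$.} By Theorem~\ref{th-grank1}, $T$ is extreme if and only if $\|T\|_{1\to\infty}=1$. As recorded at the start of this subsection, $\|T\|_{1\to\infty}=\max_i t_{ii}$ for $T\geq 0$, by Cauchy--Schwarz ($|t_{ij}|\leq\sqrt{t_{ii}t_{jj}}$). So the condition is exactly that some diagonal entry of $T$ equals $1$.

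\emph{Rank $r\geq 2$.} Let $m$ be the number of diagonal entries of $T$ equal to $1$, so $m\leq n=3$. If $T$ were extreme, Corollary~\ref{cor-02} would give $4\leq r^2\leq m\leq 3$, a contradiction. So no matrix of rank $2$ or $3$ is extreme.

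Combining the three cases gives the stated equivalence. The argument is essentially bookkeeping. The only point needing care is the identification $\|T\|_{1\to\infty}=\max_i t_{ii}$ for $T\geq 0$, which justifies replacing ``$\|T\|=1$'' by ``some diagonal entry equals $1$'' in the rank-one case.
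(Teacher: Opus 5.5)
Your proof is correct and follows the same route as the paper, which obtains this corollary by combining Theorem~\ref{th-grank1} (the rank-one case, read through $\|T\|_{1\to\infty}=\max_i t_{ii}$) with Corollary~\ref{cor-02} ($r^2\le m\le 3$ forces $r\le 1$). Your explicit check that $T=0$ is extreme is a small addition that the paper leaves implicit.
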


\begin{cor}\label{cor-03}
 $T\in \mathcal{A}_{1\to\infty}^{(4)}({\mathbb{C}})$. Then $T$ is an extreme contraction if and only if either (i) $T=0$, or (ii) $\operatorname{rank}(T)=1$ and at least one diagonal entry of $T$ equals $1$, or (iii) $\operatorname{rank}(T)=2$ and $T$ is an extreme correlation matrix.
\end{cor}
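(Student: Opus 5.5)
The plan is to bound the rank using Corollary~\ref{cor-02}, and then treat the ranks $r=0,1,2$ separately. For $r=2$, the key observation is that the set of correlation matrices is a face of $\mathcal{A}_{1\to\infty}^{(4)}(\mathbb C)$.

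\textbf{Necessity.} Let $T$ be an extreme contraction of rank $r$, and let $m$ be the number of diagonal entries equal to $1$. By Corollary~\ref{cor-02}, $r^2\leq m\leq 4$, so $r\leq 2$.
\begin{itemize}
\item If $r=0$, then $T=0$.
\item If $r=1$, Theorem~\ref{th-grank1} gives $\|T\|_{1\to\infty}=1$. Since $\|T\|_{1\to\infty}=\max_i t_{ii}$, some diagonal entry equals $1$.
\item If $r=2$, Corollary~\ref{cor-02} forces $m=4$. Hence every diagonal entry of $T$ is $1$, and $T$ is a correlation matrix. Let $\mathcal{E}$ denote the set of $4\times 4$ correlation matrices. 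Then $\mathcal{E}\subseteq\mathcal{A}_{1\to\infty}^{(4)}(\mathbb C)$, so any proper segment in $\mathcal{E}$ through $T$ would also be a segment in $\mathcal{A}$ through $T$. Therefore $T$ is an extreme point of $\mathcal{E}$.
\end{itemize}

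\textbf{Sufficiency.}
\begin{itemize}
\item \emph{$T=0$.} If $0=\tfrac12(T_1+T_2)$ with $T_1,T_2\geq 0$, then $\operatorname{tr}T_1+\operatorname{tr}T_2=0$. Since both traces are non-negative, each is zero, so $T_1=T_2=0$. Alternatively, Lemma~\ref{lem-g01} applies vacuously with $r=0$.
\item \emph{$\operatorname{rank}(T)=1$ with some $t_{ii}=1$.} Then $\|T\|_{1\to\infty}=\max_i t_{ii}=1$, and Theorem~\ref{th-grank1} shows that $T$ is extreme.
\item \emph{$\operatorname{rank}(T)=2$ and $T$ an extreme correlation matrix.} Suppose $T=\tfrac12(T_1+T_2)$ with $T_1,T_2\in\mathcal{A}_{1\to\infty}^{(4)}(\mathbb C)$. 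For each $i$, the diagonal entries $(T_1)_{ii}$ and $(T_2)_{ii}$ lie in $[0,1]$ and average to $1$, so both equal $1$. Thus $T_1,T_2\in\mathcal{E}$, and extremality of $T$ in $\mathcal{E}$ gives $T_1=T_2=T$.
\end{itemize}
This argument shows that $\mathcal{E}$ is a face of $\mathcal{A}_{1\to\infty}^{(4)}(\mathbb C)$, which gives both directions of case (iii).

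\textbf{Main subtlety.} No step is genuinely hard. The points needing care are:
\begin{itemize}
\item the rank reduction via $r^2\leq m$, which excludes ranks $3$ and $4$ and forces $m=4$ when $r=2$;
\item the face argument identifying extremality in $\mathcal{A}$ with extremality in $\mathcal{E}$ for matrices whose diagonal entries are all $1$.
\end{itemize}
The condition "rank $2$" in (iii) is not an additional hypothesis beyond extremality in $\mathcal{E}$. It simply records the case distinction; in the sufficiency direction it is used only to place $T$ in that case.
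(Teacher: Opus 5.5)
Your proof is correct and follows the paper's argument: Corollary~\ref{cor-02} forces $r\le 2$ (and $m=4$ when $r=2$), Theorem~\ref{th-grank1} handles rank one, and the fact that the correlation matrices form a face of $\mathcal{A}_{1\to\infty}^{(4)}(\mathbb C)$ settles case (iii), and you spell out this face argument and the case $T=0$ in more detail than the paper does. One wording point: your closing remark that rank $2$ is ``not an additional hypothesis beyond extremality in $\mathcal{E}$'' should not be read as saying that extreme correlation matrices have rank $2$, since rank-one ones $u\otimes\bar u$ with $|u_i|=1$ exist and are simply covered by (ii).
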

\begin{proof}
If any of (i), (ii), or (iii) holds, then $T$ is an extreme contraction. In particular, since the set of correlation matrices is a face of $\mathcal{A}_{1\to\infty}^{(n)}({\mathbb{C}})$, an extreme correlation matrix must be an extreme contraction. Conversely, if $T$ is an extreme contraction with $\operatorname{rank}(T)=r$, then Corollary~\ref{cor-02} gives $r\leq 2$. If $r=2$, all diagonal entries of $T$ are $1$, so $T$ is a correlation matrix; moreover, it must be an extreme correlation matrix. If $r=1$, then (ii) follows from Theorem~\ref{th-grank1}.
\end{proof}

For real scalars, using Remark~\ref{remark-real} and proceeding as in Theorem~\ref{th-complete}, we obtain the following.

\begin{thm}\label{th-realcomplete}
Suppose $T\in \mathcal{A}_{1\to\infty}^{(n)}({\mathbb{R}})$ and $\operatorname{rank}(T)=r$. Suppose that $t_{ii}=1$ for $1\leq i\leq m$ and $t_{ii}<1$ for $m+1\leq i\leq n$. Then the following are equivalent.

\noindent (i) $T$ is an extreme contraction.

\noindent (ii) $\operatorname{rank}(L)=\frac{r^2+r}{2}$, where $L$ is as in Remark~\ref{remark-real}.

\noindent (iii) $\operatorname{rank}(\tilde{A})=\frac{r^2+r}{2}$, where $\tilde{A}=[P\ \ Q]\in M_{m\times \frac{r^2+r}{2}}(\mathbb{R})$ with $P\in M_{m\times r}$ and $Q\in M_{m\times \frac{r(r-1)}{2}}$ with $s$-th column of $P$ is $(|a_{1s}|^2, \ldots, |a_{ms}|^2)^T$ for each $1\leq s \leq r$ whereas $(p,l)$-th column of $Q$ is $(2a_{1p}{a}_{1l},\ldots, 2a_{mp}{a}_{ml})^T$ for each $1\leq p<l\leq r.$ Thus, $\tilde{A}$ is of the form
\[
\tilde{A}=\begin{bmatrix}
\ldots&	|a_{1s}|^2&\ldots &2a_{1p}{a}_{1l}&\ldots\\
\ldots &	|a_{2s}|^2 &\ldots&2a_{2p}{a}_{2l}&\ldots\\
\vdots&	\vdots & \vdots &\vdots &\vdots\\
\ldots&	|a_{ms}|^2 &\ldots&2a_{mp}{a}_{ml}&\ldots
\end{bmatrix}.
\]
\end{thm}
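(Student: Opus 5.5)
The plan is to repeat the argument of Theorem~\ref{th-complete}, now with the real parametrisation of Remark~\ref{remark-real}. Take $T=A\Gamma A^*$ with $A$ real orthogonal. A real symmetric $\tilde C=(c_{pl})\in M_r(\mathbb R)$ is described by $\frac{r^2+r}{2}$ real coordinates, namely $c_{pp}$ for $1\leq p\leq r$ and $c_{pl}$ for $p<l$. For $C=A\begin{bmatrix}\tilde C&0\\0&0\end{bmatrix}A^*$, the $i$-th diagonal entry is
\[
(C)_{ii}=\sum_{p=1}^r a_{ip}^2c_{pp}+\sum_{1\leq p<l\leq r}2a_{ip}a_{il}c_{pl}.
\]
For $1\leq i\leq m$ this is exactly $L(C')_i$. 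Since the columns of $\tilde A$ are the coefficient vectors of these coordinates, $\tilde A$ is the matrix of $L$. This gives (ii)$\Leftrightarrow$(iii) at once, because $L$ has domain of dimension $\frac{r^2+r}{2}$.

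For (ii)$\Rightarrow$(i), use Corollary~\ref{cor-g01} together with the characterisation \eqref{eq-n1} of $\|\cdot\|_{1\to\infty}$ on non-negative matrices as the maximal diagonal entry. Suppose $\tilde C$ lies in the set \eqref{eq-n1}. For $i\leq m$ we have $t_{ii}=1$, so the conditions $1\pm (C)_{ii}\leq 1$ force $(C)_{ii}=0$, that is, $L(C')=0$. Full column rank gives $\ker L=\{0\}$, so $\tilde C=0$ and $T$ is extreme.

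For (i)$\Rightarrow$(ii), argue by contraposition. If $\operatorname{rank}(L)<\frac{r^2+r}{2}$, pick a nonzero $C'\in\ker L$ and let $\tilde C$ be the corresponding nonzero real symmetric matrix. Then $(C)_{ii}=0$ for $i\leq m$. For $i>m$ we have $t_{ii}<1$, so there is $\delta_1>0$ with $t_{ii}\pm\delta (C)_{ii}\leq1$ for all $0<\delta\leq\delta_1$. Since $\tilde\Gamma$ is positive definite (its diagonal entries $\lambda_j$ are all strictly positive), $\tilde\Gamma\pm\delta\tilde C\geq0$ for $\delta\leq\lambda_r/\|\tilde C\|$. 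This in turn gives $T\pm\delta C\geq0$, hence the diagonal entries are nonnegative and $\|T\pm\delta C\|_{1\to\infty}\leq1$. Lemma~\ref{lem-g01} then shows $T$ is not extreme.

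The only point needing care is that the real symmetric $\tilde C$ from Lemma~\ref{lem-g01} is parametrised bijectively by $V\cong\mathbb R^{(r^2+r)/2}$. This replaces the $r^2$ real parameters of the Hermitian case, since over $\mathbb R$ there are no imaginary parts. Everything else is routine and mirrors the complex proof.
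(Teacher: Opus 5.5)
Your proposal is correct and is essentially the paper's own argument. The paper proves this theorem by carrying the proof of Theorem~\ref{th-complete} over to the real parametrisation of Remark~\ref{remark-real}, exactly as you do: the kernel of $L$ with Corollary~\ref{cor-g01} for (ii)$\Rightarrow$(i), a small multiple of a nonzero kernel element with Lemma~\ref{lem-g01} for (i)$\Rightarrow$(ii), and $\tilde A$ as the matrix of $L$ for (ii)$\Leftrightarrow$(iii), with your explicit bound $\delta\le\min\{\delta_1,\lambda_r/\|\tilde C\|\}$ making the positivity step slightly more precise than the paper's.
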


\begin{cor}\label{cor-02real}
Suppose $T\in \mathcal{A}_{1\to\infty}^{(n)}({\mathbb{R}})$ and $\operatorname{rank}(T)=r$. If $T$ is an extreme contraction, then $\frac{r^2+r}{2}\leq m$, where $m$ is the number of diagonal entries of $T$ equal to $1$.
\end{cor}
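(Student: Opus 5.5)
The argument runs exactly parallel to the proof of Corollary~\ref{cor-02}, with Theorem~\ref{th-realcomplete} in place of Theorem~\ref{th-complete}. Let $T\in\mathcal{A}_{1\to\infty}^{(n)}(\mathbb{R})$ be an extreme contraction of rank $r$. By relabelling coordinates we may assume that $t_{ii}=1$ for $1\leq i\leq m$ and $t_{ii}<1$ for $m+1\leq i\leq n$. Relabelling is a simultaneous permutation of rows and columns, i.e.\ conjugation by a permutation matrix. This preserves non-negative definiteness and the $\ell_1^n\to\ell_\infty^n$ norm, so it maps extreme points to extreme points and does not change $m$. The hypotheses of Theorem~\ref{th-realcomplete} are therefore satisfied.

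By the implication (i)$\Rightarrow$(iii) of Theorem~\ref{th-realcomplete}, the real matrix $\tilde{A}=[P\ \ Q]$ has rank $\frac{r^2+r}{2}$. Now $\tilde{A}$ is an $m\times\frac{r^2+r}{2}$ matrix, so its rank is at most its number of rows, $m$. Hence $\frac{r^2+r}{2}=\operatorname{rank}(\tilde{A})\leq m$, which is the claim.

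The only step that needs care is confirming that Theorem~\ref{th-realcomplete} applies to every rank, including degenerate cases. For $r=0$ we have $T=0$ and the inequality $0\leq m$ is trivial. For $m=0$, the theorem says $\operatorname{rank}(L)=\frac{r^2+r}{2}$ for a map $L$ into $\mathbb{R}^0$, which forces $r=0$. This is consistent with Proposition~\ref{prop-g1}: a nonzero extreme point must have $\|T\|=1$, so some diagonal entry equals $1$. I expect no real obstacle here; the substance lies entirely in the proof of Theorem~\ref{th-realcomplete}, which is taken as given.
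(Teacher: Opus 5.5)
Your proof is correct and is the paper's argument: it is the real analogue of the proof of Corollary~\ref{cor-02}, where extremality gives $\operatorname{rank}(\tilde{A})=\frac{r^2+r}{2}$ via Theorem~\ref{th-realcomplete}, and this rank is at most $m$ because $\tilde{A}$ has only $m$ rows. Your remarks on relabelling by a permutation and on the degenerate cases $r=0$ and $m=0$ are valid, and they spell out what the paper leaves implicit.
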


We recall the following result of Li and Tam.

\begin{thm}[{\cite[Corollary~5]{LT}}]\label{th-LT}
A $3\times 3$ real symmetric correlation matrix of rank two is extreme if and only if its off-diagonal entries all have absolute values less than $1$.
\end{thm}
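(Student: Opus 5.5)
Since this is a cited result of Li and Tam, I would reprove it using Theorem~\ref{th-realcomplete}, applied with $n=m=3$ and $r=2$. A correlation matrix has all diagonal entries equal to $1$. The set of real $3\times 3$ correlation matrices is a face of $\mathcal{A}_{1\to\infty}^{(3)}(\mathbb{R})$. So $T$ is an extreme correlation matrix if and only if it is an extreme contraction, which holds if and only if the $3\times 3$ matrix
\[
\tilde A=\begin{bmatrix} a_{11}^2 & a_{12}^2 & 2a_{11}a_{12}\\ a_{21}^2 & a_{22}^2 & 2a_{21}a_{22}\\ a_{31}^2 & a_{32}^2 & 2a_{31}a_{32}\end{bmatrix}
\]
is invertible. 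Here the rows $u_i=(a_{i1},a_{i2})$ are the first two columns of the orthogonal matrix $A$ restricted to row $i$. Put $v_i=\sqrt{\lambda_1}\,a_{i1}e_1+\sqrt{\lambda_2}\,a_{i2}e_2\in\mathbb{R}^2$. Then $T=(\langle v_i,v_j\rangle)$, and $\|v_i\|=1$.

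The key step is to identify when $\tilde A$ is singular. Row $i$ of $\tilde A$ is the coefficient vector of the quadratic form $q_i(x,y)=(a_{i1}x+a_{i2}y)^2$ in the basis $x^2,y^2,xy$. So $\tilde A$ is singular if and only if the three squares of linear forms $\ell_i^2$, with $\ell_i=a_{i1}x+a_{i2}y$, are linearly dependent in the $3$-dimensional space of binary quadratic forms. I would use the following classical fact, via a Vandermonde argument: squares of pairwise non-proportional linear forms in two variables are linearly independent when there are at most three of them. For nonzero forms, $\ell_i^2=\ell_i(\ell_i)$ up to scaling. If no $\ell_i$ vanishes, write $\ell_i\propto x+t_iy$ (or $y$). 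The determinant then reduces to a Vandermonde determinant in the $t_i$, which is nonzero exactly when the $t_i$ are distinct. Hence $\tilde A$ is invertible if and only if the vectors $u_1,u_2,u_3$ are nonzero and pairwise non-parallel.

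Next I would translate this condition to $T$. Since $\lambda_1,\lambda_2>0$, the map $u_i\mapsto v_i$ is an invertible diagonal scaling. It therefore preserves being nonzero and being parallel, and each $u_i$ is automatically nonzero because $\|v_i\|=1$. Two unit vectors $v_i,v_j$ are parallel if and only if $|\langle v_i,v_j\rangle|=1$, that is, $|t_{ij}|=1$. Combining these, $\tilde A$ is invertible if and only if $|t_{ij}|<1$ for all $i\neq j$, which proves both directions.

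I expect the main obstacle to be the use of the face property, namely that extremality within correlation matrices coincides with extremality within $\mathcal{A}_{1\to\infty}^{(3)}$. This holds because if $T=\frac12(T_1+T_2)$ with $T_k\in\mathcal{A}$, the bound $(T_k)_{ii}\le 1$ together with $t_{ii}=1$ forces $(T_k)_{ii}=1$. The other delicate point is the careful Vandermonde case analysis, in particular when some $a_{i1}=0$.
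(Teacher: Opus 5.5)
Your argument is correct. It also takes a genuinely different route from the paper, which gives no proof at all and simply imports the statement from \cite{LT}.

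You derive it from the paper's own machinery instead. The face property reduces extremality in the real elliptope to extremality in $\mathcal{A}^{(3)}_{1\to\infty}(\mathbb{R})$. Theorem~\ref{th-realcomplete} with $m=n=3$, $r=2$ then reduces this to invertibility of the $3\times 3$ matrix $\tilde A$, and reading the rows of $\tilde A$ as squares of binary linear forms finishes the argument. There is no circularity, since Theorem~\ref{th-realcomplete} does not rely on Theorem~\ref{th-LT}.

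The case analysis you flag as delicate (some $a_{i1}=0$) disappears if you use the homogeneous Vandermonde identity
\[
\det\tilde A=-2\prod_{i<j}(a_{i1}a_{j2}-a_{j1}a_{i2}),
\]
which needs no dehomogenization. Combine it with $\det[v_i\ v_j]=\sqrt{\lambda_1\lambda_2}\,(a_{i1}a_{j2}-a_{j1}a_{i2})$ and Lagrange's identity $\det[v_i\ v_j]^2=1-t_{ij}^2$ for unit vectors in $\mathbb{R}^2$. This even yields the closed form $(\det\tilde A)^2=4(\lambda_1\lambda_2)^{-3}\prod_{i<j}(1-t_{ij}^2)$, from which both directions are immediate.

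As for what each route buys: the citation is shorter, and the Li--Tam criterion covers every $n$ and $r$. That criterion says extremality holds iff the matrices $v_iv_i^{T}$ span the symmetric $r\times r$ matrices. Your route shows that Theorem~\ref{th-realcomplete} with $m=n$ is exactly that criterion. It also makes Corollary~\ref{cor-realn=3}, and hence the computation of $K_G^{+,\mathbb{R}}(3)$, independent of the external reference.
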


Using Corollary~\ref{cor-02real} and Theorem~\ref{th-LT}, and proceeding as in Corollary~\ref{cor-03}, we obtain the following.

\begin{cor}\label{cor-realn=3}
Suppose $T\in \mathcal{A}_{1\to\infty}^{(3)}({\mathbb{R}})$. Then $T$ is an extreme contraction if and only if either (i) $T=0$, or (ii) $\operatorname{rank}(T)=1$ and at least one diagonal entry of $T$ equals $1$, or (iii) $\operatorname{rank}(T)=2$ and $T$ is an extreme correlation matrix, that is,
\[
T=\begin{bmatrix} 1&a&b\\ a&1&c\\ b&c&1 \end{bmatrix}, \quad\text{where } |a|<1,|b|<1,\mbox{ and }|c|<1.
\]
\end{cor}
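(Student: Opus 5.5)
We follow the pattern of Corollary~\ref{cor-03}, replacing Corollary~\ref{cor-02} by its real counterpart Corollary~\ref{cor-02real} and adding the Li--Tam description (Theorem~\ref{th-LT}) of extreme rank-two correlation matrices. Throughout, $T\in\mathcal{A}_{1\to\infty}^{(3)}(\mathbb{R})$ has rank $r$, and $m$ is the number of diagonal entries of $T$ equal to $1$.

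\emph{Sufficiency.} The zero matrix is extreme. If $T\neq 0$ were a proper midpoint $\frac12(T_1+T_2)$ in the set, then each $T_i\geq 0$ and the diagonal entries of $T_1+T_2$ vanish. Hence $T_1$ and $T_2$ have zero diagonal, so $T_1=T_2=0$, a contradiction. In case (ii), $\|T\|_{1\to\infty}=\max_i t_{ii}=1$, so Theorem~\ref{th-grank1} applies. In case (iii), the set of correlation matrices is a face of $\mathcal{A}_{1\to\infty}^{(3)}(\mathbb{R})$. Indeed, if $T=\frac12(T_1+T_2)$ has all diagonal entries equal to $1$ and each $(T_k)_{ii}\leq 1$, then every $(T_k)_{ii}=1$. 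So an extreme correlation matrix is an extreme contraction. By Theorem~\ref{th-LT}, a rank-two correlation matrix with off-diagonal entries of modulus less than $1$ is such an extreme correlation matrix.

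\emph{Necessity.} Let $T\neq 0$ be extreme. Corollary~\ref{cor-02real} gives $\frac{r^2+r}{2}\leq m\leq 3$, so $r\leq 2$.
\begin{itemize}
\item If $r=1$, Theorem~\ref{th-grank1} gives $\|T\|_{1\to\infty}=1$, so some $t_{ii}=1$. This is case (ii).
\item If $r=2$, then $m\geq 3$, so all diagonal entries equal $1$ and $T$ is a correlation matrix. Any decomposition of $T$ inside the correlation matrices is also a decomposition inside $\mathcal{A}_{1\to\infty}^{(3)}(\mathbb{R})$, so $T$ is an extreme correlation matrix. Theorem~\ref{th-LT} then shows that its off-diagonal entries have modulus less than $1$, giving the displayed form in (iii).
\end{itemize}

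We expect no serious obstacle: everything reduces to earlier results. The only points needing care are the face argument, which is a one-line diagonal argument, and the fact that $r=3$ is excluded because $6>3$.
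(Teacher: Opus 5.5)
Your proposal is correct and follows the paper's proof essentially verbatim. Corollary~\ref{cor-02real} gives $r\leq 2$ and forces $m=3$ when $r=2$, Theorem~\ref{th-grank1} handles rank one, and the face property of correlation matrices together with Theorem~\ref{th-LT} handles rank two; you also spell out the sufficiency direction, which the paper calls trivial, and the only slip is verbal: in showing that $0$ is extreme you mean ``if $0$ were a proper midpoint $\frac12(T_1+T_2)$'', not ``if $T\neq 0$ were''.
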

\begin{proof}
The sufficient part follows trivially. For the necessary part, let $T$ be an extreme contraction of rank $r$. By Corollary~\ref{cor-02real}, $r\leq 2$. If $r=0$, then $T=0$. If $r=1$, Theorem~\ref{th-grank1} gives (ii). If $r=2$, Corollary~\ref{cor-02real} forces all three diagonal entries to equal $1$, so $T$ is a correlation matrix and hence an extreme correlation matrix. The rest of (iii) follows from Theorem~\ref{th-LT}.
\end{proof}

\subsection{The case $X=\ell_\infty^n$, $Y=\ell_1^n$}

We now turn to extreme contractions of $\mathcal{A}_{\infty\to 1}^{(n)}({\mathbb{R}})$.

\begin{thm} \label{th-norm-inf-1}
Let $T=\big (\!\big ( t_{i j}\big )\! \big ) \in M_3(\mathbb{R})$ be a symmetric non-negative definite matrix. Then
\begin{eqnarray}\label{Formula for positive definite of size 3}
    \|T\|_{\infty \rightarrow 1}=\operatorname{tr}(T)+2\left(\left|t_{12}\right|+\left|t_{13}\right|+\left|t_{23}\right|-2 \min \left\{\left|t_{12}\right|,\left|t_{13}\right|,\left|t_{23}\right|\right\} \cdot \mathbf{1}_{\left\{t_{12} t_{13} t_{23}<0\right\}}\right),
\end{eqnarray}
where $\mathbf{1}_{\left\{t_{12} t_{13} t_{23}<0\right\}}$ equals 1 if $t_{12} t_{13} t_{23}<0$ and 0 otherwise. 
\end{thm}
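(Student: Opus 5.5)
Since $T$ is symmetric and non-negative definite, the quadratic form $x\mapsto x^TTx$ is convex. For a symmetric $T$ one also has $\|T\|_{\infty\to1}=\max_{x,y\in\{\pm1\}^3} y^TTx$. The first step is to reduce this bilinear maximum to a quadratic one, by showing
\[
\|T\|_{\infty\to 1}=\max_{\varepsilon\in\{\pm1\}^3}\varepsilon^T T\varepsilon .
\]
For this, write $T=S^TS$ (possible since $T\geq 0$). Then
\[
y^TTx=\langle Sy,Sx\rangle\leq \|Sy\|\|Sx\|\leq \max(\|Sx\|^2,\|Sy\|^2),
\]
so the maximum over $x,y$ is attained on the diagonal $x=y$. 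The reverse inequality is trivial. This is the only place where positivity is used for the reduction, and it is routine.

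Next I will evaluate $\varepsilon^T T\varepsilon=\operatorname{tr}(T)+2(\varepsilon_1\varepsilon_2t_{12}+\varepsilon_1\varepsilon_3t_{13}+\varepsilon_2\varepsilon_3t_{23})$ over the eight sign vectors. Only four values occur, since $\varepsilon$ and $-\varepsilon$ give the same value. Put $s_{12}=\varepsilon_1\varepsilon_2$, $s_{13}=\varepsilon_1\varepsilon_3$, $s_{23}=\varepsilon_2\varepsilon_3$. These three signs range over exactly the triples with $s_{12}s_{13}s_{23}=1$. So the problem becomes: maximize $\sum s_{ij}t_{ij}$ over sign triples whose product is $+1$.

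The unconstrained maximum is $\sum|t_{ij}|$, attained at $s_{ij}=\operatorname{sgn}(t_{ij})$ (with an arbitrary choice of sign where $t_{ij}=0$). Two cases follow.
\begin{itemize}
\item If $t_{12}t_{13}t_{23}>0$, or if some entry vanishes, the optimal signs can be chosen with product $+1$. The value is then $\sum|t_{ij}|$, and the indicator is $0$.
\item If $t_{12}t_{13}t_{23}<0$, the optimal sign pattern has product $-1$ and is forbidden. Any admissible triple must disagree with $\operatorname{sgn}(t_{ij})$ in an odd number of positions, each flip costing $2|t_{ij}|$. The best choice is to flip exactly the entry of smallest modulus, which gives $\sum|t_{ij}|-2\min|t_{ij}|$.
\end{itemize}
Multiplying by $2$ and adding $\operatorname{tr}(T)$ yields the stated formula in both cases.

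The only point needing care is the reduction to $x=y$ in the first step; the rest is a finite sign optimization. I expect no real obstacle beyond checking the boundary case of zero entries, where the indicator is $0$ and the formula remains consistent.
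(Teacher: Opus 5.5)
Your proposal is correct and follows the paper's proof. Both reduce to $\max_{\varepsilon\in\{\pm1\}^3}\varepsilon^T T\varepsilon$, rewrite it in terms of the pairwise products $\varepsilon_i\varepsilon_j$ (constrained to have product $+1$), and split according to the sign of $t_{12}t_{13}t_{23}$. The differences are minor: you prove the diagonal reduction directly via $T=S^TS$ and Cauchy--Schwarz, where the paper simply cites Rietz's result, and your ``odd number of flips'' count replaces the paper's explicit enumeration of the four admissible sign triples.
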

\begin{proof}
Since $T \geq 0$, the bilinear form $(x, y) \mapsto y^{\top} T x$ is maximized over $\{ \pm 1\}^3 \times\{ \pm 1\}^3$ on the diagonal $y=x$ (see \cite{Rietz}). 
Therefore, 
\begin{eqnarray}\label{Maximum is attained at diagonal}
    \|T\|_{\infty \rightarrow 1}=\max _{x \in\{-1,+1\}^3} x^{\top} T x.
\end{eqnarray}
For $x=\left(x_1, x_2, x_3\right) \in\{ \pm 1\}^3$,
\[
x^{\top} T x=\sum_{i=1}^3 t_{i i} x_i^2+2 \sum_{i<j} t_{i j} x_i x_j=\operatorname{tr}(T)+2\left(t_{12} \varepsilon_{12}+t_{13} \varepsilon_{13}+t_{23} \varepsilon_{23}\right),
\]
where $\varepsilon_{i j}:=x_i x_j \in\{-1,+1\}$. Since $\varepsilon_{12} \varepsilon_{13} \varepsilon_{23}=x_1^2 x_2^2 x_3^2=1$, the triple $\left(\varepsilon_{12}, \varepsilon_{13}, \varepsilon_{23}\right)$ is constrained to the four elements of $\{ \pm 1\}^3$ whose product is $+1$, and conversely. Hence, 
\begin{equation*}
\|T\|_{\infty \rightarrow 1}=\operatorname{tr}(T)+2 M, \quad\text {where} \quad M:=\max _{\substack{\varepsilon_{i j} \in\{ \pm 1\} \\ \varepsilon_{12} \varepsilon_{13} \varepsilon_{23}=1}}\left(\varepsilon_{12} t_{12}+\varepsilon_{13} t_{13}+\varepsilon_{23} t_{23}\right). 
\end{equation*}
We assume that $t_{12}t_{23}t_{13}\neq 0.$ We set $s_{i j}:=\operatorname{sgn}\left(t_{i j}\right) \in \{1,-1\},$ $\eta_{i j}:=\varepsilon_{i j} s_{i j}$, and $\sigma= s_{12}s_{13}s_{23}$. Then we rewrite  
\begin{equation*} 
M=\max _{\substack{\eta_{i j} \in\{ \pm 1\} \\ \eta_{12} \eta_{13} \eta_{23}=\sigma}}\left(\eta_{12}\left|t_{12}\right|+\eta_{13}\left|t_{13}\right|+\eta_{23}\left|t_{23}\right|\right).
\end{equation*}
If $\sigma=+1$,  the admissible triples $\left(\eta_{12}, \eta_{13}, \eta_{23}\right)$ are $(+,+,+),(+,-,-),(-,+,-),(-,-,+)$, giving values
\[
|t_{12}|+|t_{13}|+|t_{23}|, \quad |t_{12}| -|t_{13}| - |t_{23}| , \quad -|t_{12}| + |t_{13}| - |t_{23}|,\quad - |t_{12}| - |t_{13}| + |t_{23}|.
\]
If $\sigma=-1$, then the admissible triples $\left(\eta_{12}, \eta_{13}, \eta_{23}\right)$ are giving values 
\[
-|t_{12}| - |t_{13}| - |t_{23}|, \quad -|t_{12}| + |t_{13}| + |t_{23}|, \quad |t_{12}| - |t_{13}| + |t_{23}|, \quad |t_{12}| + |t_{13}| - |t_{23}|.
\]
The first is clearly not maximal. The maximum of the remaining three equals
\begin{multline*}
\max \{|t_{13}| + |t_{23}| - |t_{12}|, |t_{12}| + |t_{23}| - |t_{13}|, |t_{12}|  + |t_{13}| - |t_{23}| \}\\ = (|t_{12}| + |t_{13}| + |t_{23}| )-2 \min \{|t_{12}| , |t_{13}| , |t_{23}| \}.
\end{multline*}
Combining the two cases, we have 
\begin{equation}\label{amar1st}
M=|t_{12}|+|t_{13}|+|t_{23}|-2 \min \{|t_{12}|,|t_{13}|,|t_{23}|\} \cdot \mathbf{1}_{\{t_{12} t_{13} t_{23}<0\}}.\end{equation}
If some $t_{i j}=0$, then $t_{12} t_{13} t_{23}=0$, the indicator is $0$,
and the expression for $M$ reduces to $\left|t_{12}\right|+\left|t_{13}\right|+\left|t_{23}\right|$, i.e., the sum of the absolute values of the two nonzero off-diagonal entries. However, observe that if one of the off-diagonal, say $t_{23}=0$, the constraint $\varepsilon_{12} \varepsilon_{13} \varepsilon_{23}=1$ can be satisfied while independently choosing $\varepsilon_{12}= s_{12}$ and $\varepsilon_{13}=s_{13}$ by setting $\varepsilon_{23}=\varepsilon_{12} \varepsilon_{13}$. Thus, each surviving $\left|t_{i j}\right|$ is attained with a $+$ sign. This shows that the formula in \eqref{amar1st} is valid in this case also. Therefore, by substituting the value of $M$ into $\|T\|_{\infty \rightarrow 1}=\operatorname{tr}(T)+2 M$ yields that
\[
\|T\|_{\infty \rightarrow 1}=\operatorname{tr}(T)+2\left(\left|t_{12}\right|+\left|t_{13}\right|+\left|t_{23}\right|-2 \min \left\{\left|t_{12}\right|,\left|t_{13}\right|,\left|t_{23}\right|\right\} \cdot \mathbf{1}_{\left\{t_{12} t_{13} t_{23}<0\right\}}\right),
\]
as claimed. This completes the proof of the theorem.
\end{proof}
Even though Theorem \ref{th-norm-inf-1} is written 
for symmetric non-negative definite matrices only, the proof shows that the explicit formula for the norm of a symmetric matrix is given by \eqref{Formula for positive definite of size 3} remains valid as long as we have the equality \eqref{Maximum is attained at diagonal}. For example, consider $T=(\!(t_{ij})\!)_{3\times 3}$ with $t_{ij}=1$ for all $(i,j)$ except $(i,j)=(2,3)$ and $(i,j)=(3,2),$ and $t_{23}=t_{32}=-1.$ Note that $\|T\|_{\infty \to 1}=5,$ which agrees with the right hand side of \eqref{Formula for positive definite of size 3}.

For the proof of the next theorem, we need the formula for $\|T\|_{\infty \to 1}$, where $T\in \mathcal{M}_2(\mathbb{C})$ is self-adjoint. Such a formula, given in the lemma below, is obtained by a straightforward but lengthy computation which is given in the Appendix.
\begin{lem}\label{norm:lem 2.17} Let $a, b \in \mathbb{R}$ and $r \geq 0$. Define the matrix $A = \begin{pmatrix} a & re^{i\theta} \\ re^{-i\theta} & b \end{pmatrix}$ for $0 \leq \theta < 2\pi$. The matrix norm $\|A\|_{\infty\to 1}$ is given by:
\[ \|A\|_{\infty\to 1} = \sup_{x \in [0, 2\pi)} f(x) \]
where the objective function $f(x)$ is defined as:
\[ f(x) = \sqrt{a^2 + r^2 + 2ar \cos x} + \sqrt{b^2 + r^2 + 2br \cos x} \]
The value of this supremum is characterized as follows:
\begin{enumerate}
    \item If $ab \geq 0$, the maximum occurs at $x=0$, yielding $\|A\|_{\infty\to 1} = |a| + |b| + 2r$.
    \item If $ab < 0$, let $s = -b$ and assume $a, s > 0$. The supremum is $\max \{ f(0), f(\pi), f(x^*) \}$, where $x^*$ is a critical point satisfying $\cos x^* = \frac{r(a-s)}{2as}$ and $f(x^*) = (a+s)\sqrt{1 + \frac{r^2}{as}}$, provided $\left| \frac{r(a-s)}{2as} \right| \leq 1$. 
\end{enumerate}
In the case $b = -a$, the norm is $2\sqrt{a^2 + r^2}$.
\end{lem}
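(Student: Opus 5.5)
The plan is to compute $\|A\|_{\infty\to 1}$ directly from its definition as a supremum of the sesquilinear form over the torus, and then to reduce everything to a one-variable calculus problem. By definition, $\|A\|_{\infty\to1}=\sup\{\|Au\|_1 : \|u\|_\infty\le 1\}$. By convexity the supremum is attained at unimodular vectors. Multiplying $u$ by a common phase changes nothing, so we may take $u=(1,e^{i\phi})$. Then
\[
\|Au\|_1=|a+re^{i(\theta+\phi)}|+|re^{-i\theta}+be^{i\phi}| .
\]
Substituting $x=\theta+\phi$, the second term becomes $|r+be^{ix}|$ after pulling out the unimodular factor $e^{-i\theta}$. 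Expanding the two moduli gives exactly
\[
f(x)=\sqrt{a^2+r^2+2ar\cos x}+\sqrt{b^2+r^2+2br\cos x},
\]
and $x$ ranges over all of $[0,2\pi)$ as $\phi$ does. This establishes the sup formula, which is independent of $\theta$.

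\textbf{Case (1).} If $ab\ge0$, then $a$ and $b$ have the same sign. Each radicand is then monotone in $\cos x$ in the same direction: increasing if $a,b\ge0$, decreasing if $a,b\le 0$. Hence both square roots are maximized simultaneously, at $\cos x=1$ or at $\cos x=-1$. Either way the value is $|a|+r+|b|+r=|a|+|b|+2r$, since $f$ depends only on $\cos x$ and we may absorb the sign. It is natural to report this as $x=0$ after the symmetry $a,b\mapsto -a,-b$, which is harmless because $\|{-A}\|=\|A\|$.

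\textbf{Case (2).} If $ab<0$, write $s=-b>0$ and $c=\cos x\in[-1,1]$. Then
\[
g(c)=\sqrt{a^2+r^2+2arc}+\sqrt{s^2+r^2-2src},
\]
and the supremum of $f$ is the maximum of $g$ over $[-1,1]$. That maximum is attained at an endpoint $c=\pm1$ (giving $f(0)$ and $f(\pi)$) or at an interior critical point. Setting $g'(c)=0$ gives
\[
\frac{a}{\sqrt{a^2+r^2+2arc}}=\frac{s}{\sqrt{s^2+r^2-2src}} .
\]
Squaring yields $a^2(s^2+r^2-2src)=s^2(a^2+r^2+2arc)$, that is, $r^2(a^2-s^2)=2asr\,c(a+s)$. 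Hence $c=r(a-s)/(2as)$ when $r\neq0$, and this is admissible exactly when $|c|\le1$.

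Plugging back in, the radicands become $a^2+r^2+r^2(a-s)/s=a(as+r^2)/s$ and, symmetrically, $s(as+r^2)/a$. Their square roots sum to $(a+s)\sqrt{(as+r^2)/(as)}=(a+s)\sqrt{1+r^2/(as)}$, as claimed. For the special case $b=-a$, i.e. $s=a$, we get $c^*=0$. Then $f(x^*)=2a\sqrt{1+r^2/a^2}=2\sqrt{a^2+r^2}$, while $f(0)=f(\pi)=(a+r)+|a-r|=2\max(a,r)$, which is at most $2\sqrt{a^2+r^2}$. This gives the stated norm.

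The only delicate point is the bookkeeping that the critical point is the unique interior candidate and must be compared with the endpoints; I would not attempt to decide in general which of the three values wins, since the statement only claims the maximum of the three. I also need to check that squaring introduced no spurious root. This holds because both sides of the critical-point equation are positive, so the squared equation is equivalent to the original.
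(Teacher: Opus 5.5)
Your proposal is correct and follows essentially the same route as the paper's appendix proof. You restrict to unimodular vectors, reduce to the one-variable function $f(x)$, and compare $f(0)$ and $f(\pi)$ with the interior critical point $\cos x^*=r(a-s)/(2as)$, evaluated exactly as in the paper. Working in $c=\cos x$ makes your version slightly more careful than the paper's. You also note that for $a,b\le 0$ the maximum actually sits at $x=\pi$, recovered at $x=0$ via $-A$, whereas the paper asserts the maximum at $x=0$ for all same-sign $a,b$.
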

\begin{thm}\label{th-2dim}
Let $T\in \mathcal{A}_{\infty\to 1}^{(2)}(\mathbb F)$ be non-zero. Then the following are equivalent.

\noindent (i) $T$ is an extreme contraction.

\noindent (ii) $\operatorname{rank}(T)=1$ and $a+b+2r=1$, where
$T=\begin{bmatrix} a & re^{i\theta} \\ re^{-i\theta}& b \end{bmatrix}$, with $a,b,r\geq 0$ and $0\leq \theta< 2\pi$ if $\mathbb{F}=\mathbb{C}$, and $\theta\in \{0,\pi\}$ if $\mathbb{F}=\mathbb{R}$.
\end{thm}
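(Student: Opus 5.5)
Since every $T\in\mathcal{A}^{(2)}_{\infty\to 1}(\mathbb F)$ is non-negative definite, its diagonal entries $a,b$ are non-negative, so $ab\geq 0$. Part (1) of Lemma~\ref{norm:lem 2.17} then gives the closed form
\[
\|T\|_{\infty\to 1}=a+b+2r \qquad\text{for every } T=\begin{bmatrix} a & re^{i\theta}\\ re^{-i\theta} & b\end{bmatrix}\geq 0 .
\]
Hence $\mathcal{A}^{(2)}_{\infty\to 1}(\mathbb F)$ is the set of such $T\geq 0$ with $a+b+2r\leq 1$. The same formula holds over $\mathbb R$, where $\theta\in\{0,\pi\}$. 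I will use this description throughout.

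\emph{Proof of (ii)$\Rightarrow$(i).} If $\operatorname{rank}(T)=1$ and $a+b+2r=1$, then $\|T\|_{\infty\to 1}=1$. Theorem~\ref{th-grank1} then says directly that $T$ is an extreme contraction.

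\emph{Proof of (i)$\Rightarrow$(ii).} Let $T\neq 0$ be extreme.
\begin{enumerate}
\item By Proposition~\ref{prop-g1}, $\|T\|=1$, that is, $a+b+2r=1$.
\item It remains to exclude $\operatorname{rank}(T)=2$, i.e.\ the case where $T$ is positive definite ($a,b>0$ and $r^2<ab$).
\item In that case take the nonzero self-adjoint (real) matrix $C=\diag(1,-1)$. Then
\[
T\pm\delta C=\begin{bmatrix} a\pm\delta & re^{i\theta}\\ re^{-i\theta} & b\mp\delta\end{bmatrix}.
\]
\item For small $\delta>0$ this matrix is still positive definite, because the positive definite matrices form an open set; concretely, $(a\pm\delta)(b\mp\delta)>r^2$ for $\delta$ small.
\item Its off-diagonal modulus is still $r$ and its trace is still $a+b$. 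By the closed form above, $\|T\pm\delta C\|_{\infty\to 1}=a+b+2r=1$.
\item Thus $T=\tfrac12(T+\delta C)+\tfrac12(T-\delta C)$ with both summands in $\mathcal{A}^{(2)}_{\infty\to1}(\mathbb F)$. So $T$ is not extreme, a contradiction.
\end{enumerate}
Therefore $\operatorname{rank}(T)=1$.

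The only step needing care is the norm formula. I must check that the hypothesis $ab\geq 0$ of Lemma~\ref{norm:lem 2.17}(1) holds for the perturbed matrices as well as for $T$. It does, since $T\pm\delta C\geq 0$ forces non-negative diagonal entries. The remaining steps are routine.
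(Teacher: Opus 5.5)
Your proof is correct and follows the paper's route. For (ii)$\Rightarrow$(i) you use Theorem~\ref{th-grank1} together with the closed form $\|T\|_{\infty\to 1}=a+b+2r$. For (i)$\Rightarrow$(ii) you use Proposition~\ref{prop-g1} and then the same diagonal perturbation $T\pm\delta\,\diag(1,-1)$, which keeps positive definiteness for small $\delta$ and leaves the norm unchanged by Lemma~\ref{norm:lem 2.17}; your check that the perturbed diagonal stays non-negative is the paper's requirement $a\pm\epsilon>0$, $b\pm\epsilon>0$.
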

\begin{proof}
$(ii)\Rightarrow (i)$ follows from Theorem~\ref{th-grank1} and the fact that $\|T\|_{\infty\to 1}=a+b+2r$.

$(i)\Rightarrow (ii)$. Since $T$ is an extreme contraction, Proposition~\ref{prop-g1} gives $\|T\|_{\infty\to 1}=1$, that is, $a+b+2r=1$. We show $\operatorname{rank}(T)=1$. Suppose $\operatorname{rank}(T)=2$. Then $a>0$, $b>0$, and $ab-r^2>0$. Choose $\epsilon>0$ such that $a\pm \epsilon>0$, $b\pm \epsilon>0$, and $ab-r^2\pm \epsilon(b-a\mp \epsilon)>0$. Consider
\[
T_1=\begin{bmatrix} a +\epsilon& re^{i\theta} \\ re^{-i\theta}& b-\epsilon \end{bmatrix}, \quad T_2=\begin{bmatrix} a -\epsilon& re^{i\theta} \\ re^{-i\theta}& b+\epsilon \end{bmatrix}.
\]
Then $\|T_1\|_{\infty\to 1}=\|T_2\|_{\infty\to 1}=a+b+2r=1$ by Lemma \ref{norm:lem 2.17}, both $T_1\geq 0$ and $T_2\geq 0$, and $T=\frac{1}{2}(T_1+T_2)$, contradicting extremality. Therefore $\operatorname{rank}(T)=1$.
\end{proof}

\begin{lem}\label{lem-directsum}
Let $T=A\oplus 0$ with $A\in \mathcal{A}_{\infty\to 1}^{(n)}(\mathbb F)$. Then $T$ is an extreme contraction of  $\mathcal{A}_{\infty\to 1}^{(n+1)}(\mathbb F)$ if and only if $A$ is an extreme contraction of $\mathcal{A}_{\infty\to 1}^{(n)}(\mathbb F)$.
\end{lem}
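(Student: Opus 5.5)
Two facts carry the whole argument.

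First, $\|A\oplus 0\|_{\infty\to 1}=\|A\|_{\infty\to 1}$. Writing $\|T\|_{\infty\to1}=\sup\{|y^*Tx|:\|x\|_\infty\le1,\|y\|_\infty\le1\}$, the zero last row and column mean the last coordinates of $x,y$ play no role.

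Second, if $S\in\mathcal{A}^{(n+1)}_{\infty\to1}(\mathbb F)$ satisfies $T\pm S\ge 0$ (or more generally $0\le S'\le 2T$-type bounds), then $S$ has zero last row and column. This is exactly the argument in the proof of Lemma~\ref{lem-g01}. The $(n+1,n+1)$ entry of $T\pm S$ is $\pm s_{n+1,n+1}\ge0$, so it vanishes. Positivity of $T\pm S$ then forces the entire last row and column of $S$ to vanish, since a positive semidefinite matrix with a zero diagonal entry has zero corresponding row and column.

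For ($\Leftarrow$), suppose $A$ is extreme and $T$ is not. Then there is a nonzero $C$ with $T\pm C\in\mathcal{A}^{(n+1)}_{\infty\to1}$. By the second fact, $C=\tilde C\oplus 0$ with $\tilde C\neq0$. Then $A\pm\tilde C\ge0$, since these are compressions of $T\pm C$ to the first $n$ coordinates. By the first fact, $\|A\pm\tilde C\|_{\infty\to1}=\|T\pm C\|_{\infty\to1}\le1$. So $A=\tfrac12(A+\tilde C)+\tfrac12(A-\tilde C)$ with both summands in $\mathcal{A}^{(n)}_{\infty\to1}$, contradicting extremality of $A$.

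For ($\Rightarrow$), suppose $A$ is not extreme, so $A\pm\tilde C\in\mathcal{A}^{(n)}_{\infty\to1}$ for some nonzero $\tilde C$. Set $C=\tilde C\oplus0$. Then $T\pm C=(A\pm\tilde C)\oplus0$ is positive semidefinite, and by the first fact it has norm at most $1$. Hence $T$ is not extreme.

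There is no real obstacle. The only point needing care is the norm identity for direct sums with a zero block, and it is immediate from the bilinear-form description of $\|\cdot\|_{\infty\to1}$. The argument uses only the definitions and the zero-row argument of Lemma~\ref{lem-g01}, so it holds for both $\mathbb F=\mathbb R$ and $\mathbb F=\mathbb C$.
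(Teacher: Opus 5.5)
Your proof is correct and follows the paper's argument. Both rest on the norm identity $\|A\oplus 0\|_{\infty\to1}=\|A\|_{\infty\to1}$ and on the fact that a zero diagonal entry of a positive semidefinite matrix kills the corresponding row and column; you phrase this through symmetric perturbations $T\pm C$, while the paper uses decompositions $T=\tfrac12(T_1+T_2)$, and these are equivalent. One small slip: your second fact should not assume $S\in\mathcal{A}^{(n+1)}_{\infty\to1}(\mathbb F)$, since the $C$ you apply it to need not lie there, but your argument for that fact uses only $T\pm S\ge 0$.
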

\begin{proof}
Observe that $\|T\|_{\infty\to 1}=\|A\|_{\infty\to 1}$. If $T$ is extreme and $A=\frac{1}{2}A_1+\frac{1}{2}A_2$ for some $A_1,A_2\in \mathcal{A}_{\infty\to 1}^{(n)}$, then $T=\frac{1}{2}(A_1\oplus 0)+\frac{1}{2}(A_2\oplus 0)$, forcing $A_1\oplus 0=A_2\oplus 0$, hence $A=A_1=A_2$.

Conversely, if $A$ is extreme and $T=\frac{1}{2}(T_1+T_2)$ for some $T_1,T_2\in \mathcal{A}_{\infty\to 1}^{(n+1)}$, then $(T)_{(n+1)(n+1)}=0$ implies $(T_i)_{(n+1)(n+1)}=0$ for $i=1,2$, so $T_i=A_i\oplus 0$ and $A=\frac{1}{2}(A_1+A_2)$. Since $A$ is extreme, $A_1=A_2$, hence $T_1=T_2$.
\end{proof}

\begin{thm}\label{thm-rank2-inftyto1}
 Suppose $T=(t_{ij})_{i,j=1}^3\in \mathcal{A}_{\infty\to 1}^{(3)}(\mathbb{R})$ with $\operatorname{rank}(T)=2$. If  $T$ is an extreme contraction of $\mathcal{A}_{\infty\to 1}^{(3)}(\mathbb{R})$, then either (i) $t_{12}=t_{13}=t_{23}<0$, or (ii) $|t_{12}|=|t_{13}|=|t_{23}|$ and exactly two of $t_{12},t_{13},t_{23}$ are positive.
\end{thm}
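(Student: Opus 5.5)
The plan is to argue by contraposition, using the perturbation criterion of Lemma~\ref{lem-g01} together with the description of the $\infty\to1$ norm from the proof of Theorem~\ref{th-norm-inf-1}. Let $T$ be an extreme point of rank $2$. By Proposition~\ref{prop-g1}, $\|T\|_{\infty\to1}=1$.

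\textbf{Setting up the perturbations.} Write $T=A\Gamma A^*$ with $\tilde\Gamma=\diag(\lambda_1,\lambda_2)$ and $\lambda_1,\lambda_2>0$. The admissible perturbations are $C=A\bigl(\tilde C\oplus 0\bigr)A^*$, where $\tilde C$ ranges over the $3$-dimensional real space of symmetric $2\times2$ matrices. Since $\tilde\Gamma$ is positive definite, $\tilde\Gamma\pm\delta\tilde C\geq0$ for all small $\delta>0$. Hence $T\pm\delta C\geq 0$, and the norm formula \eqref{Maximum is attained at diagonal} applies to $T\pm\delta C$:
\[
\|T\pm\delta C\|_{\infty\to1}=\max_{x\in\{\pm1\}^3}\bigl(x^{\top}Tx\pm\delta\, x^{\top}Cx\bigr).
\]
Up to the sign $x\mapsto -x$ there are only four sign vectors. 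Call $x$ \emph{active} if $x^{\top}Tx=1$.

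\textbf{Counting active sign vectors.} Inactive $x$ satisfy $x^{\top}Tx<1$, so they stay below $1$ for small $\delta$. It therefore suffices to find a nonzero $\tilde C$ with $x^{\top}Cx=0$ for every active $x$. Each map $\tilde C\mapsto x^{\top}Cx=(A^*x)^{\top}(\tilde C\oplus0)(A^*x)$ is a linear functional on a $3$-dimensional space. If at most two sign vectors are active, the common kernel is nontrivial, and Lemma~\ref{lem-g01} shows $T$ is not extreme. So extremality forces at least three of the four quantities
\[
u_1+u_2+u_3,\quad u_1-u_2-u_3,\quad -u_1+u_2-u_3,\quad -u_1-u_2+u_3
\]
to attain the common maximum, where $(u_1,u_2,u_3)=(t_{12},t_{13},t_{23})$.

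\textbf{Solving the linear equalities.} If the last three quantities are equal, then $u_1=u_2=u_3=a$. Maximality of $-a$ over $3a$ gives $a\le0$, and $a<0$ is case (i). If the first quantity together with two of the others are equal, say the first three, we get $u_1=u_2=-u_3=a$ with $a\geq0$; the other choices follow by symmetry. For $a>0$ this is case (ii): equal absolute values with exactly two positive entries.

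\textbf{The degenerate case $a=0$.} This is the main point needing care. Here $T$ is diagonal of rank $2$, say $T=\diag(d_1,d_2,0)$ after permuting. Then $A$ can be taken to be the identity on the support, and $x^{\top}Cx=c_{11}+c_{22}+2x_1x_2c_{12}$. These are only two distinct functionals, so a nonzero $\tilde C$ in their common kernel exists, and $T$ is not extreme. (Alternatively, Lemma~\ref{lem-directsum} together with Theorem~\ref{th-2dim} gives the same conclusion, since the $2\times2$ block has rank $2$.)

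One must also check that the sign-vector reduction is legitimate for the perturbed matrices. This holds precisely because positivity of $T\pm\delta C$ is preserved, which is where $\operatorname{rank}(T)=2$ and $\tilde\Gamma>0$ are used.
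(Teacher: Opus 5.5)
Your proof is correct and follows essentially the paper's argument. You perturb by $A(\tilde C\oplus 0)A^{*}$ with $\tilde C$ ranging over the $3$-dimensional space of symmetric $2\times 2$ matrices, chosen to vanish on the norm-attaining sign vectors, so extremality forces at least three of the four sign classes to be active; the paper phrases the same step as ``at least two of the three neighbour equalities hold at a fixed norming vector $z$'', and your version, which reads the sign patterns directly off maximality and handles the diagonal case explicitly, is a slightly cleaner presentation of the same proof.
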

\begin{proof}
Suppose  $T$ is an extreme contraction of $\mathcal{A}_{\infty\to 1}^{(3)}(\mathbb{R}).$ Thus $\|T\|_{\infty\to 1} = 1$. Without loss of generality, we may assume that $T$ is a non-diagonal matrix; otherwise using Lemma \ref{lem-directsum} and Theorem \ref{th-2dim}, we can show that $T$ can not be an extreme contraction. Since $T$ is non-negative definite, the norm is attained at some $z = (z_1, z_2, z_3) \in \{-1, 1\}^3$ such that $\langle Tz, z \rangle = 1$. Since $z$ is a point of maxima, therefore, using $\langle Tz, z \rangle\geq \langle T(-z_1,z_2,z_3), (-z_1,z_2,z_3) \rangle,$ we must have $t_{12}z_1z_2+t_{13}z_1z_3 \geq  0.$ 
Similarly, we must also have $t_{12}z_1z_2+t_{23}z_2z_3 \geq 0$ and 
$t_{13}z_1z_3+t_{23}z_2z_3 \geq 0.$
We first show that at least two of the following equations must hold:
\begin{align}
t_{12}z_2+t_{13}z_3 &= 0, \label{eq-001}\\
t_{12}z_1+t_{23}z_3 &= 0, \label{eq-002}\\
t_{13}z_1+t_{23}z_2 &= 0. \label{eq-003}
\end{align}
To verify this claim, assume, for instance, to the contrary that  \eqref{eq-001} and \eqref{eq-002} do not hold, that is, 
\begin{align}
    t_{12}z_1z_2 + t_{13}z_1z_3 &> 0 \label{eq-004}\\
    t_{12}z_1z_2+t_{23}z_2z_3 &> 0. \label{eq-005}
\end{align}
Since $T$ is non-negative definite and $\operatorname{rank}(T)$ is $2$, we may fix a factorization
\[
T=A\Gamma A^*,
\]
where $A\in M_{3,2}(\mathbb{R})$ has rank $2$ and 
$\Gamma\in M_2(\mathbb{R})$ is positive definite. 
We shall construct a non-zero self-adjoint matrix
\[C=\begin{pmatrix}
	c_{11}&c_{12}\\
	c_{12}&c_{22}\\
	\end{pmatrix}
\]
such that, defining
\[
E=ACA^*=(e_{ij}),
\]
we have
\[
T\pm E=A(\Gamma\pm C)A^*\geq 0
\]
and
\[
\|T\pm E\|_{\infty\to1}=\|T\|_{\infty\to1}.
\]
This will show that
\[
T=\frac12(T+E)+\frac12(T-E)
\]
is a non-trivial convex decomposition, contradicting the extremality of $T$.

Observe that the following equations have a non-zero solution.
\begin{equation}\label{eq-007}
\operatorname{tr}(E)+2e_{12}z_1z_2=0, \quad e_{13}z_1z_3+e_{23}z_2z_3=0.	
	\end{equation}
Choose a non-zero solution of (\ref{eq-007}), where $c_{11},c_{12},c_{22}$ are sufficiently small so that  $\Gamma \pm C\geq 0$ and 
\begin{equation}\label{eq-008}
 (t_{12}\pm e_{12})z_1z_2+(t_{13}\pm e_{13})z_1z_3 >  0, \quad  (t_{12}\pm e_{12})z_1z_2+(t_{23}\pm e_{23})z_2z_3 >  0.
 \end{equation}
Note that the last two inequalities hold for sufficiently small $c_{11},c_{12},c_{22}$ because of (\ref{eq-004}) and (\ref{eq-005}).
Now,
\begin{eqnarray*}
	\langle (T+E)z,z\rangle &=& \operatorname{tr}(T+E)+2((t_{12}+e_{12})z_1z_2+(t_{13}+e_{13})z_1z_3+(t_{23}+e_{23})z_2z_3) \\
	&=&\operatorname{tr}(T)+2(t_{12}z_1z_2+t_{13}z_1z_3+t_{23}z_2z_3), ~(\text{by using } (\ref{eq-007}))\\
	&=& \|T\|_{\infty\to 1}.
	\end{eqnarray*}
In what follows, we adopt the notation: $\sigma^{(i)} \odot z$, $i=1,2,3$, where 
$\sigma^{(1)}=(-1,1,1)^\top$, $\sigma^{(2)}=(1,-1,1)^\top$, $\sigma^{(3)}=(1,1,-1)^\top$, $z=(z_1, z_2, z_3)^T$ and $\odot$ is the Hadamard product. 
    From (\ref{eq-008}), it follows that
\begin{align*}
\big\langle(T+E) 
	\sigma^{(1)}\odot z, &\,\, \sigma^{(1)}\odot z\big\rangle  \\ 
    &= \operatorname{tr}(T+E)+2(-(t_{12}+e_{12})z_1z_2-(t_{13}+e_{13})z_1z_3+(t_{23}+e_{23})z_2z_3) \\
	&< \operatorname{tr}(T+E)+2((t_{12}+e_{12})z_1z_2+(t_{13}+e_{13})z_1z_3+(t_{23}+e_{23})z_2z_3)\\
	&= \|T\|_{\infty\to 1}.\phantom{ tr(T+E)+2(-(t_{12}+e_{12})z_1z_2-(t_{13}+e_{13})z_1z_3+(t_{23}+e_{23})z_2z_3)}.
\end{align*}
Similarly, using (\ref{eq-008}), we have 
\begin{align*}
	\big \langle (T+E) \sigma^{(2)}\odot z, &\,\, \sigma^{(2)} \odot z \big \rangle\\  & = \operatorname{tr}(T+E)+2(-(t_{12}+e_{12})z_1z_2+(t_{13}+e_{13})z_1z_3-(t_{23}+e_{23})z_2z_3) \\
	&< \operatorname{tr}(T+E)+2((t_{12}+e_{12})z_1z_2+(t_{13}+e_{13})z_1z_3+(t_{23}+e_{23})z_2z_3)\\
	&= \|T\|_{\infty\to 1}.
\end{align*}
Finally, 
\begin{align*}
	\big \langle (T+E) \sigma^{(3)}\odot z,&\,\, \sigma^{(3)} \odot z \big \rangle\\ &= \operatorname{tr}(T+E)+2((t_{12}+e_{12})z_1z_2-(t_{13}+e_{13})z_1z_3-(t_{23}+e_{23})z_2z_3) \\
	&= \operatorname{tr}(T) + \operatorname{tr}(E) + 2e_{12}z_1z_2 - 2(e_{13}z_1z_3 + e_{23}z_2z_3) \\
    &\phantom{\qquad\quad}+2(t_{12}z_1z_2 - t_{13}z_1z_3 - t_{23}z_2z_3)\\
	&= \operatorname{tr}(T)+2(t_{12}z_1z_2 - t_{13}z_1z_3 - t_{23}z_2z_3) \quad (\text{using } (\ref{eq-007})) \\
	&= \langle T (\sigma^{(3)}\odot z), \sigma^{(3)}\odot z \rangle \\
	&\leq \|T\|_{\infty\to 1}.
\end{align*}
Since $\langle (T+E)z,z\rangle = \|T\|_{\infty\to 1}$ and the values at all other extreme points are less than or equal to $\|T\|_{\infty\to 1}$, we conclude that 
$\|T+E\|_{\infty\to 1}=\|T\|_{\infty\to 1}.$
Proceeding similarly, we can show that $\|T-E\|_{\infty\to 1}=\|T\|_{\infty\to 1}.$ This shows that $T$ is not an extreme contraction. Hence we conclude that at least two of (\ref{eq-001}), (\ref{eq-002}) and (\ref{eq-003}) must hold. Thus, $|t_{12}| = |t_{13}| = |t_{23}| = k$ for some $k > 0$.

If $t_{12}=t_{13}=t_{23}>0$, the norm is attained at $z=\pm(1,1,1)$ and none of \eqref{eq-001}--\eqref{eq-003} is satisfied, therefore $T$ can not be an extreme contraction. Similarly, cases with exactly two negatives among $t_{12}, t_{13}, t_{23}$ (when all have the same absolute value) fail.
\end{proof}

\begin{thm}\label{thm-normattain}
Suppose $T\in \mathcal{A}_{\infty\to 1}^{(n)}(\mathbb{R})$ is an extreme contraction of rank $r$. Then $T$ attains its norm at no fewer than $r^2+r$ extreme points of $(\ell_\infty^n)_1$.
\end{thm}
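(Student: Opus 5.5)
The plan is a dimension count: if $T$ attains its norm at too few extreme points of $(\ell_\infty^n)_1$, the corresponding linear constraints cannot pin $T$ down, and we get a nontrivial convex decomposition as in Lemma~\ref{lem-g01}.

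\emph{Setup.} If $r=0$ there is nothing to prove, so assume $r\geq 1$. By Proposition~\ref{prop-g1}, $\|T\|_{\infty\to 1}=1$. The extreme points of $(\ell_\infty^n)_1$ are the sign vectors $\{\pm1\}^n$. For any non-negative definite $S$, the Cauchy--Schwarz inequality for the semi-inner product $(x,y)\mapsto y^\top Sx$ gives
\[
y^\top Sx\leq \tfrac12\bigl(x^\top Sx+y^\top Sy\bigr).
\]
Hence, as in \eqref{Maximum is attained at diagonal},
\[
\|S\|_{\infty\to1}=\max_{x\in\{\pm1\}^n}x^\top Sx .
\]
Let $N=\{x\in\{\pm1\}^n: x^\top Tx=1\}$ be the set of norm-attaining sign vectors. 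Since $N=-N$ and $x\mapsto -x$ has no fixed points, $N$ splits into $|N|/2$ antipodal pairs, and $x$ and $-x$ impose the same quadratic condition.

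\emph{Constructing a perturbation.} Write $T=A\Gamma A^*$ as in \eqref{notationforGamma}, with $\tilde\Gamma$ positive definite. Suppose, for contradiction, that $|N|<r^2+r$. Consider the real vector space of symmetric $\tilde C\in M_r(\mathbb R)$, which has dimension $\frac{r^2+r}{2}$. Put $C=A\bigl[\begin{smallmatrix}\tilde C&0\\0&0\end{smallmatrix}\bigr]A^*$ and impose the homogeneous linear conditions
\[
x^\top Cx=0\quad\text{for one representative } x \text{ of each pair }\{x,-x\}\subseteq N .
\]
There are $|N|/2<\frac{r^2+r}{2}$ such conditions, so a nonzero solution $\tilde C$ exists.

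\emph{Checking the two conditions of Lemma~\ref{lem-g01}.} Since $\tilde\Gamma>0$, we have $\tilde\Gamma\pm\delta\tilde C\geq 0$ for all small $\delta>0$, and therefore $T\pm\delta C\geq0$. Because $T\pm\delta C$ is non-negative definite, the displayed formula gives
\[
\|T\pm\delta C\|_{\infty\to1}=\max_{x}x^\top(T\pm\delta C)x .
\]
For $x\in N$ this quadratic form equals $1$. For each of the finitely many $x\notin N$ we have $x^\top Tx<1$, so shrinking $\delta$ keeps $x^\top(T\pm\delta C)x<1$. Thus $\|T\pm\delta C\|_{\infty\to1}=1$. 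Lemma~\ref{lem-g01} then shows that $T$ is not extreme, a contradiction. Hence $|N|\geq r^2+r$.

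\emph{Main obstacle.} The delicate step is the count. One must identify antipodal sign vectors, which is what makes the bound $r^2+r$ rather than $\frac{r^2+r}{2}$. One must also justify reducing the norm to the diagonal quadratic form for the perturbed matrices; this is why non-negative definiteness of $T\pm\delta C$ is secured before the norm is computed.
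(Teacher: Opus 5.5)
Your proof is correct and is essentially the paper's argument. Both proofs impose $|N|/2$ homogeneous linear conditions on the $\frac{r^2+r}{2}$-dimensional space of symmetric $r\times r$ perturbations living on the range of $T$, then shrink the perturbation to keep $T\pm\delta C\geq 0$ and the strict inequalities at the non-norming sign vectors. The only differences are cosmetic: you use the unitary diagonalization from \eqref{notationforGamma} with Lemma~\ref{lem-g01} instead of the paper's full-column-rank factorization $T=AA^T$, and you prove the reduction $\|S\|_{\infty\to1}=\max_x x^\top Sx$ directly by Cauchy--Schwarz rather than citing it.
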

\begin{proof} The extreme points of the unit ball $(\ell_\infty^n)_1$ are the vectors in $\{-1, 1\}^n$. By symmetry, they occur in antipodal pairs $\pm w$. Let $S$ be a set of $2^{n-1}$ representative extreme points formed by fixing the final coordinate to $1$:
\[S = \{w_j = (w_{j1}, \dots, w_{j(n-1)}, 1) : w_{jm} \in \{1, -1\}\}.\]
Suppose for the sake of contradiction that $T$ attains its norm at strictly fewer than $r^2+r$ extreme points. Since these points come in $\pm$ pairs, $T$ attains its norm at $2k$ points where $2k < r^2+r$. We can order the vectors in $S$ such that $T$ attains its norm at the first $k$ vectors. Therefore, for the norm-attaining points,

\[\langle Tw_i, w_i \rangle = \|T\|_{\infty\to 1}, \quad \forall~ 1 \leq i \leq k,\]
and for the remaining non-norm-attaining points in $S$,

\[\langle Tw_j, w_j \rangle < \|T\|_{\infty\to 1}, \quad \forall~ k+1 \leq j \leq 2^{n-1}.\]
Because $2k < r^2+r$, it follows that $k < \frac{r^2+r}{2}$. Since $T \geq 0$ and has rank $r$, we can factor it as $T = AA^T$, where $A$ is an $n \times r$ real matrix with full column rank. We seek to construct a small symmetric perturbation of the form $E = ACA^T$, where $C$ is an $r \times r$ real symmetric matrix. The real vector space of $r \times r$ symmetric matrices has dimension $\frac{r^2+r}{2}$. We require our perturbation to satisfy

\[\langle Ew_i, w_i \rangle = 0, \quad \forall~ 1 \leq i \leq k.\]
Substituting $E = ACA^T$, this requirement becomes$$\langle ACA^T w_i, w_i \rangle = (A^T w_i)^T C (A^T w_i) = 0, \quad \forall~ 1 \leq i \leq k.$$This imposes $k$ homogeneous linear equations on the entries of $C$. Because the number of equations $k$ is strictly less than the dimension of the space, there exists a non-trivial solution. Thus, we can choose a non-zero symmetric matrix $C$, which gives a non-zero symmetric perturbation matrix $E$. Because $T = AA^T$ and $E = ACA^T$, we can scale $C$ (and consequently $E$) to be arbitrarily small while still satisfying our homogeneous equations. We scale $E$ to be small enough to guarantee two conditions: first, that $T \pm E = A(I_{r \times r} \pm C)A^T \geq 0$, and second, that the strict inequalities separating the norm from the non-norm-attaining points are preserved. Specifically, for all $k+1 \leq j \leq 2^{n-1}$, we ensure

\[\langle (T\pm E) w_1, w_1 \rangle > \langle (T\pm E) w_j, w_j \rangle.\]
We now evaluate the norm of the perturbed operators $T \pm E$. For the $k$ norm-attaining points, the perturbation vanishes by design:
\[\langle (T\pm E) w_i, w_i \rangle = \langle Tw_i, w_i \rangle \pm \langle Ew_i, w_i \rangle = \|T\|_{\infty\to 1} \pm 0 = \|T\|_{\infty\to 1}.\]
For the remaining points, the strict inequalities established above ensure that their values remain strictly bounded below $\|T\|_{\infty\to 1}$. Consequently, the norm of the perturbed operators is completely determined by the first $k$ points:

\[\|T \pm E\|_{\infty\to 1} = \sup_{1 \leq j \leq 2^{n-1}} \langle (T\pm E) w_j, w_j \rangle = \langle (T\pm E) w_1, w_1 \rangle = \|T\|_{\infty\to 1}.\]
We have shown that $\|T + E\|_{\infty\to 1} = \|T - E\|_{\infty\to 1} = \|T\|_{\infty\to 1}$. Because $E$ is non-zero, we can write $T$ as the non-trivial convex combination

\[T = \frac{1}{2} \bigl((T + E) + (T - E)\bigr).\]
This implies that $T$ is not an extreme contraction, which contradicts our initial assumption. Therefore, $T$ must attain its norm at at least $r^2+r$ extreme points.
\end{proof}

\begin{cor}\label{infty to 1 - low dimension - rank bound}
 If $T\in \mathcal{A}_{\infty\to 1}^{(n)}(\mathbb{R})$ is an extreme contraction and $n=4$, then $\operatorname{rank}(T)\leq 3$; if $n=3$, then $\operatorname{rank}(T)\leq 2$.
\end{cor}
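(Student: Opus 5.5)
This follows by counting norm-attaining sign vectors and comparing with the lower bound of Theorem~\ref{thm-normattain}. Let $T$ be an extreme contraction of rank $r$. By Theorem~\ref{thm-normattain}, $T$ attains its norm at no fewer than $r^2+r$ extreme points of $(\ell_\infty^n)_1$. These extreme points are the $2^n$ vectors of $\{-1,1\}^n$, so we get
\[
r^2+r\leq 2^n .
\]
If $T=0$ there is nothing to prove, so assume $r\geq 1$.

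For $n=4$ suppose, to the contrary, that $r=4$. Then Theorem~\ref{thm-normattain} requires at least $r^2+r=20$ norm-attaining sign vectors. Only $2^4=16$ sign vectors exist, a contradiction. Hence $\operatorname{rank}(T)\leq 3$.

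For $n=3$ suppose, to the contrary, that $r=3$. Then at least $r^2+r=12$ norm-attaining sign vectors are needed, but only $2^3=8$ exist. Hence $\operatorname{rank}(T)\leq 2$.

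The proof is a direct application of Theorem~\ref{thm-normattain} followed by a count, so there is no real obstacle. One point needs a check: Theorem~\ref{thm-normattain} counts norm-attaining extreme points, which come in antipodal pairs $\pm w$, and these are the elements of $\{-1,1\}^n$ themselves, not pairs of them. Accordingly the bound above is $2^n$ rather than $2^{n-1}$. The argument does not need the sharper bound $2^{n-1}$ on the number of pairs.
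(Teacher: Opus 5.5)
Your argument is correct and is exactly the intended one: the paper states this corollary without a separate proof, as the immediate consequence $r^2+r\leq 2^n$ of Theorem~\ref{thm-normattain}, which rules out $r=4$ when $n=4$ (since $20>16$) and $r=3$ when $n=3$ (since $12>8$). A minor wording point: counting antipodal pairs gives $(r^2+r)/2\leq 2^{n-1}$, which is the same inequality rather than a sharper one, so nothing is gained or lost by counting pairs instead of points.
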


\subsection{The positive Grothendieck constant $K_G^{+,\mathbb{F}}(3)$}\label{sec:KG}
Let $T=(t_{ij})\in M_n(\mathbb{R})$ be symmetric and non-negative definite. The $\ell_\infty\to\ell_1$ norm is given by
\[
\|T\|_{\infty\to 1} = \max_{\varepsilon\in\{\pm1\}^n} \varepsilon^T T\varepsilon = \operatorname{tr}(T)+2\max_{\varepsilon\in\{\pm1\}^n}\sum_{i<j} t_{ij}\varepsilon_i\varepsilon_j,
\]
where the first equality follows from $T\geq 0$ as in the proof of Theorem \ref{th-norm-inf-1}. This norm is equivalent to the cut norm, defined by
\[
\|T\|_{\mathrm{cut}} = \max_{S,S'\subseteq [n]}\Big|\sum_{i\in S,\,j\in S'} t_{ij}\Big|,
\]
where $S,S'$ are any two subsets of $[n]$. They satisfy the relation $\|T\|_{\mathrm{cut}} \leq \|T\|_{\infty\to 1} \leq 4\,\|T\|_{\mathrm{cut}}$, see \cite[p.~788]{AN}.

Computing these norms is NP-hard in general. For instance, if $L=D-A$ is the Laplacian of a graph $G=(V,E)$, then for any $\varepsilon\in\{\pm1\}^n$,
\[
\varepsilon^T L\varepsilon = \sum_{\{i,j\}\in E}(\varepsilon_i-\varepsilon_j)^2 = 4\,|E(S,S^c)|,
\]
where $S=\{i:\varepsilon_i=1\}$. Maximizing $|E(S,S^c)|$ is the \textsc{Max-Cut} problem; hence computing $\|L\|_{\infty\to 1}$ (and by extension $\|L\|_{\mathrm{cut}}$) is NP-hard.

To approximate this value, the natural SDP relaxation is introduced. For a symmetric matrix $\big(\!\big ( t_{i,j}\big )\!\big)$, define 
\[SDP(T) = \sup\big \{ \sum_{i,j} t_{ij}\langle u_i,u_j\rangle: u_1, \ldots, u_n, \|u_i\|=1\big\},\] 
which replaces the signs $\varepsilon_i\in\{\pm1\}$ with unit vectors $u_i$ in a Hilbert space $H$. The positive Grothendieck constant $K_G^+$ controls the integrality gap:
\[
\text{SDP}(T) \leq K_G^+\cdot \|T\|_{\infty\to 1}.
\]

For $n=3$, only four sign vectors are admissible (after fixing $\varepsilon_1=1$), and their structure is governed by the single parity bit $\sigma=\operatorname{sgn}(t_{12}t_{13}t_{23})$ --- simple enough to optimize in closed form, as Theorem \ref{th-norm-inf-1} shows. As far as we know, no comparable reduction is available for $n\geq 4$.

Let $A = (a_{ij})_{i,j=1}^n\in M_n(\mathbb F)$. Consider
\begin{equation} \label{eqn:14}
\Gamma(A):= \sup\Big\{\Big|\sum_{i,j=1}^n a_{ij}\langle v_i,w_j\rangle\Big|:\ \|v_i\|_2=1,\ \|w_j\|_2=1\ \text{for all }i,j\Big\},
\end{equation}
where $v_i,w_j$ range over vectors in arbitrary Hilbert space $\mathcal{H}$ over $\mathbb F.$
Define the numerical constant
\[
K_G^{\mathbb F}(n) \stackrel{\text { def }}{=} \sup \left\{\Gamma(A): A\in M_n(\mathbb F),\|A\|_{\infty \to 1} \leq 1\right\}.
\]
The constant $K_G^{\mathbb F}(n)$ clearly depends on the ground field. The fact that $K_G^{\mathbb F}(n)$ remains finite as $n \rightarrow \infty$ was established by Grothendieck. The limit of this sequence is denoted by $K_G^{\mathbb F}$, and is called the real or complex Grothendieck constant depending on the scalar field $\mathbb{F}$ being real or complex. Its exact value is not known. The limit taken over non-negative definite matrices is finite as well and is denoted by $K_G^{+,\mathbb F}$. 

{An immediate consequence of Theorem \ref{th-2dim} is that $K_G^{+, \mathbb{R}}(2) = 1$}. The theorem below gives the exact value of $K_G^{+, \mathbb{R}}(3)$.
\begin{thm}\label{thm-KG}
$K_G^{+,\mathbb{R}}(3)=\frac{9}{8}$.
\end{thm}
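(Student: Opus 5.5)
The plan is to reduce $K_G^{+,\mathbb{R}}(3)$ to a bilinear optimization over two compact convex sets and evaluate it at their extreme points. First, for $A\geq 0$, I would show that the supremum $\Gamma(A)$ in \eqref{eqn:14} can be taken with $v_i=w_i$. The form $(v,w)\mapsto\sum a_{ij}\langle v_i,w_j\rangle$ is a positive semidefinite sesquilinear form on $\mathcal H^n$, so Cauchy–Schwarz gives
\[
\Big|\sum a_{ij}\langle v_i,w_j\rangle\Big|\leq \Big(\sum a_{ij}\langle v_i,v_j\rangle\Big)^{1/2}\Big(\sum a_{ij}\langle w_i,w_j\rangle\Big)^{1/2}.
\]
Hence $\Gamma(A)=\operatorname{SDP}(A)=\max\{\operatorname{tr}(AX): X \text{ a } 3\times 3 \text{ real correlation matrix}\}$. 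Thus
\[
K_G^{+,\mathbb R}(3)=\sup\{\operatorname{tr}(AX): A\in\mathcal{A}^{(3)}_{\infty\to 1}(\mathbb R),\ X\in\mathcal{E}_3\},
\]
where $\mathcal E_3$ is the set of correlation matrices. Since the expression is bilinear and both sets are compact and convex, the supremum is attained at a pair $(A,X)$ of extreme points.

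Next I would dispose of the easy extreme points. If $X$ has rank one, then $X=\varepsilon\varepsilon^T$ with $\varepsilon\in\{\pm1\}^3$, and $\operatorname{tr}(AX)=\varepsilon^TA\varepsilon\leq\|A\|_{\infty\to1}\leq 1$. If $A$ has rank one, then $A=aa^T$, and $\operatorname{tr}(AX)=a^TXa\leq(\sum|a_i|)^2=\|A\|_{\infty\to 1}\leq 1$, because $|x_{ij}|\leq 1$. By Corollary~\ref{infty to 1 - low dimension - rank bound}, an extreme $A$ has rank at most $2$, and the zero matrix is trivial. So the only remaining case is $\operatorname{rank}(A)=2$, with $\|A\|_{\infty\to1}=1$ by Proposition~\ref{prop-g1}. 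In that case Theorem~\ref{thm-rank2-inftyto1} applies: $|t_{12}|=|t_{13}|=|t_{23}|=k>0$, and the product $t_{12}t_{13}t_{23}$ is negative in both alternatives. Conjugating by a diagonal sign matrix $D$ preserves $\mathcal A_{\infty\to1}$ and $\mathcal E_3$, so I may assume $t_{12}=t_{13}=t_{23}=-k$. Theorem~\ref{th-norm-inf-1} then gives $M=3k-2k=k$, so $\|A\|_{\infty\to 1}=\operatorname{tr}A+2k=1$.

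For this $A$, the objective is
\[
\operatorname{tr}(AX)=\operatorname{tr}A-2k(x_{12}+x_{13}+x_{23}).
\]
Writing $X$ as the Gram matrix of unit vectors $u_i$, we have $\|u_1+u_2+u_3\|^2\geq 0$, so $x_{12}+x_{13}+x_{23}\geq-3/2$. Hence $\operatorname{tr}(AX)\leq\operatorname{tr}A+3k$. Now $\operatorname{tr}A=1-2k$, so the bound becomes $1+k$, and I need the largest $k$ permitted by positivity. Writing $A$ as the Gram matrix of vectors $v_i$ with $\langle v_i,v_j\rangle=-k$ for $i\neq j$, the inequality $\|v_1+v_2+v_3\|^2\geq0$ gives $\operatorname{tr}A\geq 6k$. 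Combined with $\operatorname{tr}A+2k=1$ this yields $k\leq 1/8$, so $\operatorname{tr}(AX)\leq 9/8$.

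For the lower bound, I would take $A=\tfrac18(3I-J)$. It is positive semidefinite of rank $2$ with eigenvalues $0,3/8,3/8$, and by Theorem~\ref{th-norm-inf-1} it has $\|A\|_{\infty\to1}=\tfrac98-\tfrac28=1$. Pairing it with the correlation matrix having off-diagonal entries $-1/2$ (three unit vectors at $120^\circ$) gives exactly $9/8$. The main obstacle is ensuring that the extreme-point reduction is airtight. Everything rests on the necessary conditions of Theorem~\ref{thm-rank2-inftyto1} together with the rank bound. Since I use only necessary conditions on $A$ and only the elementary inequality on $X$, no extremality of $X$ beyond rank one is actually needed, and the two bounds above close the argument.
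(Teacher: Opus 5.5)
Your argument is correct. Its skeleton matches the paper's. You reduce to extreme points of $\mathcal A^{(3)}_{\infty\to1}(\mathbb R)$, then use Corollary~\ref{infty to 1 - low dimension - rank bound} and Theorem~\ref{thm-rank2-inftyto1} to reach a rank-two extreme point with $|t_{12}|=|t_{13}|=|t_{23}|=k$ and negative product. From there $\|A\|_{\infty\to1}=\operatorname{tr}A+2k$ follows from Theorem~\ref{th-norm-inf-1}.

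Where you diverge is the endgame. The paper proceeds as follows:
\begin{itemize}
\item it also passes to extreme points on the correlation side and parametrizes the rank-two ones through Corollary~\ref{cor-realn=3} (hence Li--Tam);
\item it writes the $\infty\to1$ matrix as the Gram matrix of $x_1,x_2,\alpha x_1+\beta x_2$ and maximizes $\alpha\beta/((\alpha+\beta)(1+\alpha)(1+\beta))$ to get $k\le 1/8$;
\item it dismisses the remaining sign pattern as yielding ``the same bound''.
\end{itemize}
You instead conjugate by a diagonal sign matrix, which rigorously collapses both sign patterns to $t_{12}=t_{13}=t_{23}=-k$. You then finish with two one-line Gram inequalities. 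First, $\|u_1+u_2+u_3\|^2\ge 0$ bounds the correlation side for every correlation matrix. Second, $\mathbf 1^{T}A\mathbf 1\ge 0$ gives $\operatorname{tr}A\ge 6k$, hence $k\le 1/8$.

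What your route buys:
\begin{itemize}
\item a shorter, fully checkable proof that needs no description of the extreme correlation matrices at all;
\item the details behind the paper's unproved step $\langle A,B\rangle\le\operatorname{tr}(B)+3b_{12}$ (paper's notation, with $A$ the correlation matrix);
\item a Cauchy--Schwarz justification for identifying $\Gamma(A)$ with the single-family supremum when $A\ge 0$, which the paper uses tacitly.
\end{itemize}
Your rank-one-$X$ case is in fact superfluous, since your bounds for rank-one and rank-two $A$ hold for every correlation matrix $X$. The paper's route, in turn, is designed to showcase the extreme-point descriptions of both $\mathcal A_{1\to\infty}$ and $\mathcal A_{\infty\to1}$ from Section~\ref{sec:AXY}.

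One small slip. For $A=\tfrac18(3I-J)$ one has $\varepsilon^{T}A\varepsilon=\tfrac18\bigl(9-(\sum_i\varepsilon_i)^2\bigr)$. So $\|A\|_{\infty\to1}=\tfrac98-\tfrac18$, not $\tfrac98-\tfrac28$; the value $1$ you claim is correct.
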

\begin{proof}
Let $E_{1\to \infty}$ and $E_{\infty\to 1}$ denote the sets of extreme contractions of $\mathcal{A}_{1\to\infty}^{(3)}(\mathbb{R})$ and $\mathcal{A}_{\infty\to 1}^{(3)}(\mathbb{R})$, respectively. 
Then
\begin{align*}
K_G^{+,\mathbb{R}}(3) &:= \sup\Big\{\Big|\sum_{i,j=1}^3 b_{ij}\langle x_i,x_j\rangle\Big|: B=(b_{ij})\geq 0,\, \|B\|_{\infty\to 1}\leq 1,\, \|x_i\|_2\leq 1\Big\}\\
&= \sup\{|\langle A,B\rangle|: A\in \mathcal{A}_{1\to \infty}^{(3)},\, B\in \mathcal{A}_{\infty\to 1}^{(3)}\}\\
&= \sup\{|\langle A,B\rangle|: A\in E_{1\to \infty},\, B\in E_{\infty\to 1}\}.
\end{align*}
Clearly $K_G^{+,\mathbb{R}}(3)\geq 1$. If $B=(b_{ij})$ satisfies $\|B\|_{\infty\to 1}=\sum_{i,j=1}^3|b_{ij}|$, then $|\sum_{i,j=1}^3 b_{ij}\langle x_i,x_j\rangle|\leq 1$. Thus these matrices don't count in the computation of  $K_G^{+,\mathbb{R}}(3)$.
By the norm computation in Theorem~\ref{th-norm-inf-1}, either all of $\{b_{12},b_{13},b_{23}\}$ are negative or exactly one is negative for any  extremal $B$.

Note that, from Corollary \ref{infty to 1 - low dimension - rank bound}, $\operatorname{rank}(B)\leq 2$. We first show $\operatorname{rank}(B)=2$. If $\operatorname{rank}(B)=1$, then $B=\gamma\begin{bmatrix} 1&\delta&\mu\\ \delta&\delta^2&\delta\mu\\ \mu&\delta\mu&\mu^2 \end{bmatrix}$ with $\gamma>0$. The sign conditions force (without loss of generality) $\mu=0$, $\delta<0$, and $\|B\|_{\infty\to 1}=\gamma(1-2\delta+\delta^2)$. Then $|\langle A,B\rangle|=\gamma|a_{11}+2a_{12}\delta+a_{22}\delta^2|\leq \gamma(1-2\delta+\delta^2)=1$, so the supremum is not attained at rank-$1$ matrices.

Therefore $\operatorname{rank}(B)=2$ and $B\in E_{\infty\to 1}$. By Theorem~\ref{thm-rank2-inftyto1}, either (I) $b_{12}=b_{13}=b_{23}<0$, or (II) $|b_{12}|=|b_{13}|=|b_{23}|$ with exactly two positive and one negative. In both cases $\|B\|_{\infty\to 1}=\operatorname{tr}(B)+2|b_{12}|$.

Consider the case $B=\begin{bmatrix} b_{11}&-b_{12}&b_{12}\\ -b_{12}&b_{22}&b_{12}\\ b_{12}&b_{12}&b_{33} \end{bmatrix}$ with $b_{12}>0$. Since $B\geq 0$ and $\operatorname{rank}(B)=2$, writing $B$ as the Gram matrix of $\{x_1,x_2,\alpha x_1+\beta x_2\}$ with linearly independent $x_1,x_2$ and $\alpha,\beta>0$, the constraint $\|B\|_{\infty\to 1}=1$ gives
\[
b_{12}=\frac{\alpha\beta}{(\alpha+\beta)(1+\alpha)(1+\beta)}.
\]
One checks that $\max\big\{\frac{\alpha\beta}{(\alpha+\beta)(1+\alpha)(1+\beta)}:\alpha>0,\beta>0\big\}=\frac{1}{8}$.

For the extremal $A$, rank-$1$ matrices yield $|\langle A,B\rangle|\leq 1$. So $\operatorname{rank}(A)=2$. By Corollary~\ref{cor-realn=3}, $A$ is an extreme correlation matrix:
\[
A=\begin{bmatrix} 1& a&\gamma+\delta a\\ a&1&\gamma a+\delta\\ \gamma +\delta a & \gamma a+\delta & 1 \end{bmatrix},
\]
where $\gamma^2+\delta^2+2a\gamma\delta=1$, $|a|<1$, $|\gamma+\delta a|<1$, $|\gamma a+\delta|<1$.

Computing $\langle A,B\rangle = \operatorname{tr}(B)+2b_{12}(-a+\gamma+\delta+a\gamma+a\delta)$, one obtains
\[
\langle A,B\rangle \leq \operatorname{tr}(B)+3b_{12} = 1+b_{12} \leq 1+\frac{1}{8} = \frac{9}{8},
\]
and $\langle A,B\rangle \geq \operatorname{tr}(B)-6b_{12} = 1-8b_{12} \geq 0$. The other cases of sign patterns in $B$ yield the same bound.

The value $\frac{9}{8}$ is attained by
\[
A=\begin{bmatrix} 1& -\frac{1}{2}&\frac{1}{2}\\ -\frac{1}{2}&1&\frac{1}{2}\\ \frac{1}{2}& \frac{1}{2}& 1 \end{bmatrix}, \quad B=\begin{bmatrix} \frac{1}{4}& -\frac{1}{8}&\frac{1}{8}\\ -\frac{1}{8}&\frac{1}{4}&\frac{1}{8}\\ \frac{1}{8}& \frac{1}{8}& \frac{1}{4} \end{bmatrix},
\]
giving $\langle A, B \rangle =\frac{9}{8}$.
\end{proof}
The computation below giving a lower bound for the Grothendieck constant in the real case is not sharp. However, obtaining the lower bound using Khintchine's inequality is immediate. We therefore include it here. First, define
\[
\widetilde{\Gamma}(A) 
:= \sup\Big\{\Big|\sum_{i,j=1}^n a_{ij}\langle v_i,e_j\rangle\Big|:\ \|v_i\|_2=1,\ 1\le i\le n\Big\},
\]
where $(e_j)$ is an orthonormal system in $\mathcal{H}$. Clearly, $\widetilde{\Gamma}(A)\leq{\Gamma}(A)$, see Equation \eqref{eqn:14}.

Note that we also have $\widetilde{\Gamma}(A)\leq K_G^{\mathbb{R}}\|A\|_{\infty\to 1}.$  Let $\widetilde{K_G^{\mathbb{R}}}$ denote the best possible constant in this inequality.
\begin{prop} 
$\displaystyle \widetilde{K_G^{\mathbb{R}}} = \sqrt{2}.$
\end{prop}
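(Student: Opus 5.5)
The plan is to reduce $\widetilde{\Gamma}(A)$ to a closed-form quantity, prove the upper bound with the sharp Khintchine constant, and then exhibit one explicit matrix that attains $\sqrt{2}$.

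\emph{Reduction.} Write $a_i=(a_{i1},\dots,a_{in})$ for the $i$-th row of $A$ and set $u_i:=\sum_j a_{ij}e_j\in\mathcal H$. Since $(e_j)$ is orthonormal, $\|u_i\|_2=\|a_i\|_2$. Then
\[
\Big|\sum_{i,j}a_{ij}\langle v_i,e_j\rangle\Big|=\Big|\sum_i \langle v_i,u_i\rangle\Big|\leq \sum_i\|a_i\|_2 .
\]
Equality holds for $v_i=u_i/\|u_i\|$, with $v_i$ an arbitrary unit vector when $u_i=0$. Hence $\widetilde{\Gamma}(A)=\sum_i\|a_i\|_2$. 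The claim therefore becomes: the best constant $c$ in $\sum_i\|a_i\|_2\leq c\,\|A\|_{\infty\to1}$, over all $n$ and all real $A$, equals $\sqrt2$.

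\emph{Upper bound.} We use $\|A\|_{\infty\to 1}=\sup_{\varepsilon\in\{\pm1\}^n}\sum_i|\langle a_i,\varepsilon\rangle|$. A supremum dominates an average, so taking $\varepsilon$ to be a uniformly random sign vector gives
\[
\|A\|_{\infty\to1}\geq \mathbb E_\varepsilon\sum_i\Big|\sum_j a_{ij}\varepsilon_j\Big|\geq \frac{1}{\sqrt2}\sum_i\|a_i\|_2 .
\]
The last step is Khintchine's inequality with the optimal constant $A_1=1/\sqrt2$ (Szarek's theorem), applied row by row. This yields $\widetilde{K_G^{\mathbb R}}\leq\sqrt2$. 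This step is the only nontrivial input, and it is imported rather than proved. The remaining work is to check that the optimal constant is indeed $1/\sqrt{2}$ and is not merely some lesser known constant.

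\emph{Lower bound.} Take $n=2$ and $A=\begin{bmatrix}1&1\\1&-1\end{bmatrix}$. For every $x\in\{\pm1\}^2$, one of $x_1+x_2$ and $x_1-x_2$ equals $0$ and the other equals $\pm2$, so $|x_1+x_2|+|x_1-x_2|=2$. Hence $\|A\|_{\infty\to1}=2$. On the other hand, $\sum_i\|a_i\|_2=2\sqrt2$, so the ratio is exactly $\sqrt2$. This matrix is precisely the extremal case of Khintchine's inequality, $\mathbb E|\varepsilon_1+\varepsilon_2|=\tfrac{1}{\sqrt2}\,\|(1,1)\|_2$, which is why equality holds. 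Combining the two bounds gives $\widetilde{K_G^{\mathbb R}}=\sqrt2$.
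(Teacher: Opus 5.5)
Your proof is correct and follows the paper's argument step for step: the reduction $\widetilde{\Gamma}(A)=\sum_i\|a_i\|_2$, averaging over random signs with the sharp Khintchine constant $1/\sqrt{2}$ (the paper cites Haagerup, you cite Szarek; both are fine), and the Hadamard matrix $H_2$ for the lower bound (the paper uses $\tfrac12 H_2$). Your remark that one still has to ``check'' the constant is $1/\sqrt{2}$ is unnecessary, since that is exactly Szarek's theorem, which you and the paper both import.
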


\begin{proof}
By the duality $(\ell_1^n)^*\cong\ell_\infty^n$, we have 
\[
\|A\|_{\infty\to 1}
=\sup\Big\{\sum_{i=1}^n\Big|\sum_{j=1}^n a_{ij}y_j\Big|:\ y_j\in\{+1,-1\}\Big\}.
\]
On the other hand,
\begin{align*}
\widetilde{\Gamma}(A)
&=\sup\Big\{\Big|\sum_{i,j=1}^n a_{ij}\langle v_i,e_j\rangle\Big|:\ \|v_i\|_2=1,\ 1\le i\le n\Big\}\\
&=\sup\Big\{\Big|\sum_{i=1}^n \Big\langle v_i,\sum_{j=1}^n a_{ij}e_j\Big\rangle\Big|:\ \|v_i\|_2=1,\ 1\le i\le n\Big\}.
\end{align*}
For each $i$, the inner supremum is attained when $v_i$ is in the direction of $\sum_{j=1}^n a_{ij}e_j$. Hence,
\[
\widetilde{\Gamma}(A)=\sum_{i=1}^n\Big(\sum_{j=1}^n|a_{ij}|^2\Big)^{1/2}.
\]

Now, let $\varepsilon_1,\dots,\varepsilon_n$ be independent Rademacher random variables taking values $\pm1$ with equal probability. By the sharp Khintchine inequality, for each $i$,
\[
\mathbb{E}\Big|\sum_{j=1}^n a_{ij}\varepsilon_j\Big|
\ge \frac{1}{\sqrt{2}}\Big(\sum_{j=1}^n |a_{ij}|^2\Big)^{1/2},
\]
see \cite{Haagerup1981}. Summing over $i$ gives
\[
\mathbb{E}\sum_{i=1}^n\Big|\sum_{j=1}^n a_{ij}\varepsilon_j\Big|
\ge \frac{1}{\sqrt{2}}\sum_{i=1}^n\Big(\sum_{j=1}^n |a_{ij}|^2\Big)^{1/2}.
\]
Hence, there exists a particular choice of signs $(\varepsilon_j)\in\{\pm1\}^n$ such that
\[
\sum_{i=1}^n\Big|\sum_{j=1}^n a_{ij}\varepsilon_j\Big|
\ge \frac{1}{\sqrt{2}}\sum_{i=1}^n\Big(\sum_{j=1}^n |a_{ij}|^2\Big)^{1/2}.
\]
Taking the supremum over all $y_j\in\{\pm1\}$, we obtain
\[
\|A\|_{\infty\to1}\ge \frac{1}{\sqrt{2}}\,\widetilde{\Gamma}(A),
\]
or equivalently,
\[
\widetilde{\Gamma}(A)\le \sqrt{2}\,\|A\|_{\infty\to1}.
\]
Thus, $\widetilde{K_G^{\mathrm{R}}} \leqslant \sqrt{2}$.

 To show that $\sqrt{2}$ is also a lower bound, consider the $2 \times 2$ matrix $A=\frac{1}{2} H_2$, where $H_2= \left ( \begin{smallmatrix}1 & \,\,1 \\ 1 & -1\end{smallmatrix}\right)$ is the standard Hadamard matrix. For any vector $e \in \{ \pm 1\}^2$, the vector $A e$ is always a permutation of $( \pm 1,0)$, hence $\|A e\|_1=1$, which establishes $\|A\|_{\infty \rightarrow 1}=1$. Since each row $w_i$ has entries $\pm 1 / 2$, its Euclidean norm is $\left\|w_i\right\|_2=1 / \sqrt{2}$. The sum of the row norms is thus $2 \cdot(1 / \sqrt{2})=\sqrt{2}$, showing the bound is attained.
\end{proof}
Since $\widetilde{K_G^{\mathbb{R}}} \leqslant  K_G^{\mathbb{R}}$, we have the following corollary.
\begin{cor}
  $K_G^{\mathbb{R}} \geqslant \sqrt{2}$. 
\end{cor}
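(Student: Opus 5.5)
The inequality $\widetilde{\Gamma}(A)\le \widetilde{K_G^{\mathbb{R}}}\,\|A\|_{\infty\to1}$ with $\widetilde{K_G^{\mathbb{R}}}=\sqrt2$ was just established. So the plan is to compare $\widetilde{\Gamma}(A)$ with $\Gamma(A)$ and pass to the limit defining $K_G^{\mathbb{R}}$.

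The first step is the observation already recorded before the proposition: $\widetilde{\Gamma}(A)\le\Gamma(A)$ for every $A\in M_n(\mathbb{R})$. Indeed, $\widetilde{\Gamma}(A)$ is the supremum in \eqref{eqn:14} restricted to the choice $w_j=e_j$, where $(e_j)$ is orthonormal in a real Hilbert space and hence consists of unit vectors.

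Next, take the extremal matrix from the lower-bound half of the preceding proof, $A=\frac12 H_2\in M_2(\mathbb{R})$. That proof shows $\|A\|_{\infty\to1}=1$ and $\widetilde{\Gamma}(A)=\sqrt2$. Combining with the first step gives
\[
K_G^{\mathbb{R}}(2)\ \ge\ \Gamma(A)\ \ge\ \widetilde{\Gamma}(A)=\sqrt2 .
\]

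To reach the limiting constant, I will note that $K_G^{\mathbb{R}}(n)$ is nondecreasing in $n$. This holds because padding an $n\times n$ matrix with zeros to size $(n+1)\times(n+1)$ changes neither $\|\cdot\|_{\infty\to1}$ nor $\Gamma$. Hence $K_G^{\mathbb{R}}=\lim_n K_G^{\mathbb{R}}(n)\ge K_G^{\mathbb{R}}(2)\ge\sqrt2$.

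The only point needing care is that $\widetilde{K_G^{\mathbb{R}}}\le K_G^{\mathbb{R}}$ should be read with the sharp constant attained at finite $n$. The explicit $2\times2$ witness settles this, so I expect no genuine obstacle.
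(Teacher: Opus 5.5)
Your proposal is correct and is essentially the paper's argument. The paper simply cites $\widetilde{K_G^{\mathbb{R}}}\leqslant K_G^{\mathbb{R}}$, which rests on $\widetilde{\Gamma}(A)\leqslant\Gamma(A)$, together with $\widetilde{K_G^{\mathbb{R}}}=\sqrt{2}$; you unpack this by feeding the witness $A=\frac12 H_2$ directly into $\Gamma$, and you make explicit the monotonicity of $K_G^{\mathbb{R}}(n)$ under zero-padding that the paper leaves implicit in passing from $n=2$ to the limit.
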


\section{Property Q}\label{sec:propertyQ}
Let $X$ be a finite-dimensional normed linear space, and let $A\in X\otimes X$. Since $\dim X = n < \infty$, choosing a basis $e_1,\ldots ,e_n$ of $X$, any $A\in X\otimes X$ can be written as $A=\sum_{i,j=1}^n a_{ij}\, e_i \otimes e_j$ for some scalars $a_{ij}$. We say that  $A$ is non-negative definite, denoted by $A\ge 0$, if the matrix $\big (\!\big (a_{ij}\big )\!\big)_{i,j=1}^n$ is non-negative definite. Equivalently, $A$ lies in the convex hull of the tensors $x \otimes \bar{x}$, $x \in X$, where if $x=\sum_{i=1}^n x_i e_i$, then $\bar{x}= \sum_{i=1}^n \bar{x}_i\, e_i$.

If $A \in X\otimes X$, it defines a linear transformation from $X^*$ to $X$ by $A (\lambda) = \sum_{i,j=1}^n a_{ij} \lambda(e_i) e_j$, $\lambda \in X^*$. The \textit{injective norm} $\|A\|_\epsilon$ of $A$ is defined as
\begin{equation}\label{inj}
\|A\|_\epsilon := \sup\{ \|A (\lambda)\|_X : \lambda\in X^*,\, \|\lambda\|_{X^*} = 1\}. 
\end{equation}
We let $X\otimes_\epsilon X$ denote $X\otimes X$ equipped with the injective norm. 
The \textit{projective norm} (the dual of the injective norm in $X^*\otimes X^*$) is given by
\begin{equation}\label{proj}
\|A\|_\pi:= \inf \Big\{\sum_{k=1}^N \|v_k\| \|w_k\| : A = \sum_{k=1}^N v_k\otimes w_k,\, v_k, w_k \in X,\, N\in\mathbb{N}\Big\},
\end{equation}
where the infimum is taken over all representations of $A$ as a finite sum of elementary tensors. Property Q for a normed linear space $(X, \|\cdot\|)$ was introduced in \cite{BM}. 

\begin{defn}\label{defn:propQ}
A finite-dimensional normed linear space $X$ has \emph{Property~Q} if for all non-negative definite $B$ in $X\otimes X$, we have $\|B\|_\pi = \|B\|_\epsilon$.
\end{defn}

\subsection{Property Q for $\ell_\infty^2$ and $\ell_\infty^3$ over the complex field $\mathbb{C}$}
We note that Property Q introduced in Definition \ref{defn:propQ} depends on the ground field. We first show that both $\ell_\infty^2$ and $\ell_\infty^3$ possess Property Q over the complex field. In investigating Property~Q for $\ell_\infty^n$ over the complex field, the case $n=3$ is a threshold: $\ell_\infty^n$ has Property~Q if and only if $n \leq 3$. This is proved in \cite[Theorem~2.3]{BM}. Part of that proof relies on the fact that $\ell_\infty^3$ has Property~Q, for which the original proof in \cite{BM} is not complete. Here we give a simple and self-contained proof.

An explicit formula for the injective norm in $\ell^n_\infty \otimes \ell^n_\infty$ is easy to find. For any $A\in \ell_\infty^n\otimes \ell_\infty^n$,
\[
\|A\|_{\ell_1^n \to \ell_\infty^n} = \max\{|a_{i,j}|: 1\leq i,j \leq n\}.
\]
If $A$ is also non-negative definite, then $A= \big (\!\big (\langle a_i , a_j \rangle\big )\!\big )_{i,j=1}^n$ for some vectors $a_1, \ldots , a_n$. By the Cauchy--Schwarz inequality, $|a_{ij}|^2 \leq \|a_i\|^2 \|a_j\|^2 = a_{ii}\, a_{jj}$. It follows that
\begin{equation}\label{injnorm}
\|A\|_\epsilon= \max_{1\leq i \leq n}\{a_{ii}\}.
\end{equation}

\begin{thm}\label{thm-propQ-linf3}
Both $\ell^2_\infty$ and $\ell_\infty^3$ have Property~Q.
\end{thm}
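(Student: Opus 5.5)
Since $\|B\|_\epsilon\le\|B\|_\pi$ always holds, we only need $\|B\|_\pi\le\|B\|_\epsilon$ for non-negative definite $B\in\ell_\infty^n\otimes\ell_\infty^n$ with $n\le 3$. By duality, $\|B\|_\pi=\sup\{|\langle A,B\rangle|:\|A\|_\epsilon\le1\}$, where $A$ ranges over $\ell_1^n\otimes\ell_1^n$ with the injective norm. For $A\in(\ell_1^n\otimes_\epsilon\ell_1^n)$, the norm is the $\infty\to1$ norm, i.e.\ $\|A\|_\epsilon=\|A\|_{\infty\to 1}$. So we need $|\langle A,B\rangle|\le\|A\|_{\infty\to1}\max_i b_{ii}$ for all $A$, not only positive ones.

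The plan is to reduce to extreme points on the $B$ side. By homogeneity assume $\|B\|_\epsilon=\max_i b_{ii}=1$, so $B\in\mathcal A^{(n)}_{1\to\infty}(\mathbb C)$. The map $B\mapsto\|B\|_\pi$ is convex, so its maximum over this compact convex set is attained at an extreme point. By Corollary~\ref{cor-complex-n3} for $n=3$, the extreme points are $0$ and rank-one matrices $B=x\otimes\bar x$ with $\max_i|x_i|=1$. The analogous statement for $n=2$ follows from Corollary~\ref{cor-02}: $r^2\le m\le 2$ forces $r\le1$, and Theorem~\ref{th-grank1} then applies. For such an elementary tensor, the cross-norm property gives $\|x\otimes\bar x\|_\pi\le\|x\|_\infty\|\bar x\|_\infty=1=\|B\|_\epsilon$. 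Hence $\|B\|_\pi\le1$ on all of $\mathcal A^{(n)}_{1\to\infty}(\mathbb C)$, which is the claim.

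The point needing care is the extreme-point reduction. It rests on the description of the extreme points of $\mathcal A^{(n)}_{1\to\infty}(\mathbb C)$, and this is exactly where \cite{BM} erred, since extreme points need not have all diagonal entries equal to $1$. Corollary~\ref{cor-complex-n3} settles this: the rank bound $r^2\le m\le3$ forces $r\le1$. The bound rests on the complex real-dimension count $r^2$ from Theorem~\ref{th-complete}, which I would double-check before relying on it.

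With this corollary in hand, Krein--Milman in finite dimensions expresses every $B$ as a convex combination of these rank-one extreme points. Convexity of $\|\cdot\|_\pi$ then finishes the argument directly, and no correlation-matrix rank theorem is needed. If one prefers the route sketched in the introduction, one instead passes to correlation matrices: there \cite[Theorem~3]{CV} gives rank $\le1$ for extreme complex $3\times3$ correlation matrices, and the reduction to that case goes through $B\le D+B'$-type comparisons. I would use the direct route above.
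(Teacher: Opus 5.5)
Your proof is correct. It is not the argument in the paper's proof of Theorem~\ref{thm-propQ-linf3}, but it is the alternative route the paper records afterwards in Remark~\ref{rem:Q-extreme-points}; you have also extended that route to $n=2$ via Corollary~\ref{cor-02} and Theorem~\ref{th-grank1}.

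The paper's proof works with a given $A\geq 0$ directly:
\begin{itemize}
\item If all $a_{kk}>0$, it conjugates by $D=\diag(\sqrt{a_{11}},\dots,\sqrt{a_{nn}})$ to obtain a correlation matrix.
\item It then uses \cite[Theorem~3]{CV} (extreme complex correlation matrices have rank $r$ with $r^2\leq n$) and Carath\'eodory to write $D^{-1}AD^{-1}$ as a convex combination of tensors $u\otimes\bar u$ with $|u_k|=1$.
\item It reads off $\|A\|_\pi\leq\max_k a_{kk}$ from $A=\sum_i\lambda_i\,(Du^{(i)})\otimes\overline{(Du^{(i)})}$.
\item Zero diagonal entries are handled separately by collapsing to a smaller block.
\end{itemize}
So the reduction to correlation matrices you allude to at the end is a diagonal congruence, not an order comparison of the type $B\leq D+B'$.

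Both arguments rest on the same count. The $r\times r$ Hermitian matrices form a real space of dimension $r^2$, and this space must be cut down to $\{0\}$ by at most $n\leq 3$ saturated diagonal constraints. The check you flag therefore goes through. You only need the easy direction of Theorem~\ref{th-complete}: a nonzero kernel element $\tilde C$ gives $T\pm\delta C\in\mathcal A^{(n)}_{1\to\infty}(\mathbb C)$ for small $\delta>0$. This holds because $\tilde\Gamma$ is positive definite and the unsaturated diagonal entries have slack.

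Your route buys uniformity and self-containment. The extreme points of $\mathcal A^{(n)}_{1\to\infty}(\mathbb C)$ already include the rank-one matrices with zero or sub-maximal diagonal entries. Hence you need no case split and no external theorem on correlation matrices. The paper's route buys independence from the Section~2 machinery. It also gives an explicit decomposition of $A$ into rank-one tensors, each of projective norm exactly $\|A\|_\epsilon$.
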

\begin{proof} Throughout, $A=\big (\!\big ((a_{ij}\big )\!\big )$ is non-negative definite and, by \eqref{injnorm}, $\|A\|_\epsilon=\max_k a_{kk}=:M$. Since $\|A\|_\epsilon\le\|A\|_\pi$ always holds, it suffices to prove $\|A\|_\pi\le M$. We establish this for both $n=2$ and $n=3$ via two cases. In Case 1,  all diagonal entries are strictly positive ($a_{kk} > 0$), allowing us to normalize $A$ to a correlation matrix. In Case 2, at least one diagonal entry is zero, which collapses the matrix to a lower-dimensional block. Then, in particular, the $n=3$ case is either trivial to solve directly, or reduces to the case of a $2\times2$ block handled in Case 1.

\subsubsection*{Case 1: All $a_{kk}>0$} Let $D=\operatorname{diag}(d_1,\dots,d_n)$, $d_k=\sqrt{a_{kk}}>0$ ($n\in\{2,3\}$).
Then $B:=D^{-1}AD^{-1}$ is Hermitian, non-negative  definite, with $B_{kk}=1$:
an $n\times n$ correlation matrix. The set $\mathcal{C}_n$ of such matrices is a closed, bounded (non-negative definite with unit diagonal gives $|B_{ij}|\le1$), hence compact, convex subset of the \emph{real} vector space $\mathcal{H}_n$ of $n\times n$ Hermitian matrices. As the diagonal is fixed at $1$, $\mathcal{C}_n$ lies in the affine subspace determined by the off-diagonal entries, of real dimension
$2\binom{n}{2}$.

By Minkowski's theorem, $\mathcal{C}_n$ equals the convex hull of its extreme points. 
By \cite[Theorem 3]{CV}, in the complex case an extreme point of $\mathcal{C}_n$ has rank $r$ with $r^2\le n$; for $n\in\{2,3\}$ this forces $r=1$. (For a full characterization of extreme correlation matrices for arbitrary $n$, see \cite{LT}.) A rank-one Hermitian non-negative definite matrix with unit diagonal is necessarily of the form $u\otimes\bar u$ with $|u_i|=1$, so every extreme point of $\mathcal{C}_n$ has this form.
Applying Carathéodory's theorem in the affine hull of $\mathcal{C}_n$, which has
real dimension $2\binom{n}{2}$, we can write
\[
  B \;=\; \sum_{i=1}^{m} \lambda_i \, \bigl(u^{(i)} \otimes \overline{u^{(i)}}\bigr),
  \qquad \lambda_i \geq 0, \;\; \sum_{i=1}^m \lambda_i = 1,
\]
with $m \leq 2\binom{n}{2} + 1$; concretely, $m \leq 3$ when $n = 2$ and
$m \leq 7$ when $n = 3$. In either case the sum is finite, which is all we use below. Since $D$ is real, $D\overline{u}=\overline{Du}$, so
\[
A=DBD=\sum_{i=1}^m \lambda_i\,(Du^{(i)})\otimes\overline{(Du^{(i)})}.
\]
Set $w^{(i)}:=Du^{(i)}$; then $|w^{(i)}_k|=\sqrt{a_{kk}}$. Consequently, 
$\|w^{(i)}\|_{\ell_\infty}^2=\max_k a_{kk}=M$. Therefore, using
$\|x\otimes\bar x\|_\pi=\|x\|_{\ell_\infty}^2$ and the triangle inequality on
the \emph{finite} sum we deduce that 
\[
\|A\|_\pi\le\sum_{i=1}^m\lambda_i\|w^{(i)}\|_{\ell_\infty}^2
=\sum_{i=1}^m\lambda_i M=M.
\]
Hence $\|A\|_\pi=M=\|A\|_\epsilon$.

\subsubsection*{Case 2: $a_{kk}=0$ for some $k$.}
Since $A \geqslant 0$, the minor $\left (\begin{smallmatrix}0&a_{kj}\\
\overline{a_{kj}}&a_{jj}\end{smallmatrix}\right )$ has determinant $-|a_{kj}|^2\ge0$,
so $a_{kj}=0$ for all $j$: the $k$th row and column vanish. Let
$S=\{k:a_{kk}>0\}$. 
\begin{enumerate}
\item If $S=\emptyset$ then $A=0$ and $\|A\|_\pi = 0 =\|A\|_\epsilon$.
\item If $|S|=1$, say $S=\{j\}$, then $A=a_{jj}\,e_j\otimes\bar e_j$ is an elementary tensor and $\|A\|_\pi=\|A\|_\epsilon=a_{jj}=M$.
\item If $n=3$ and $|S|=2$, then $A$ is supported on the $2\times2$ block indexed by $S$. The canonical coordinate projection operator $P_S : \ell_\infty^3 \to \ell_\infty^2$ has operator norm $1$, which ensures that $\|A\|_\pi$ and $\|A\|_\epsilon$ are equal to the respective norms of this $2\times2$ block, and the $\ell_\infty^2$ case (Case 1, $n=2$) applies.
\end{enumerate}
This proves Property~Q for both $\ell_\infty^2$ and $\ell_\infty^3$.
\end{proof}
\begin{rem}\label{rem:Q-extreme-points}
The proof of Theorem~\ref{thm-propQ-linf3} given above is self-contained; we record the alternative route that motivated the extreme-point analysis of Section~2. By the duality $(X \otimes_\pi X)^*= X^* \otimes_\epsilon X^*$ (see \cite[\S 6.4]{DF}), Property~Q for $\ell^3_\infty$ states that $\|B\|_\pi = \|B\|_\epsilon$ for all $B \ge 0$, where$$\|B\|_\pi = \sup \big\{\, |\langle T,B\rangle| : T \text{ a contraction } \ell^3_\infty\to\ell^3_1 \,\big\}.$$Since $B \mapsto \|B\|_\pi$ is convex, verifying that $\|B\|_\pi \le 1$ on $\mathcal A^{(3)}_{1\to\infty}(\mathbb C)$ reduces to evaluating this norm on its extreme points. By Corollary~\ref{cor-complex-n3}, every non-zero extreme point of $\mathcal A^{(3)}_{1\to\infty}(\mathbb C)$ is a rank-one matrix $B=v\otimes\bar v$ with $\max_i|v_i|^2=1$, yielding $\|B\|_\pi=\|v\|_\infty^2=1=\|B\|_\epsilon$. Thus, $\|B\|_\pi \le \|B\|_\epsilon$ throughout, which is Property~Q for $\ell^3_\infty$.
\end{rem}

\subsection{Property Q for  $\ell_\infty^2$ and $\ell_\infty^3$ over the real field $\mathbb{R}$}\label{sec:propertyQ-constant}
We have just proved that both  $\ell_\infty^2$ and $\ell_\infty^3$ have Property Q over the complex field. Over the real field, while $\ell_\infty^2$ has Property Q, $\ell_\infty^3$ does not. In this subsection, the scalar field is assumed to be real. First, we quantify Property Q by setting  $\rho^+(X)$ to be the smallest constant such that
\begin{equation} \label{constQ}
\|A\|_\pi \leq \rho^+(X)\|A\|_\epsilon \text { for all } A \geq 0
\end{equation}
In the special case, when $\rho^+(X)=1$, the normed linear space $X$ has Property Q matching with Definition \ref{defn:propQ}. 

\begin{thm}\label{thm-Q}
$\rho^+(\ell_\infty^2)=1$ and $\rho^+(\ell_\infty^3)=\frac{5}{4}$.
\end{thm}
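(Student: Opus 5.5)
The plan is to reduce both statements to the extreme points of $\mathcal{A}^{(n)}_{1\to\infty}(\mathbb{R})$, as in Remark~\ref{rem:Q-extreme-points}.

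\emph{Reduction.} By \eqref{injnorm}, $\{A\ge 0:\|A\|_\epsilon\le 1\}=\mathcal{A}^{(n)}_{1\to\infty}(\mathbb{R})$. The map $A\mapsto\|A\|_\pi$ is convex, so
\[
\rho^+(\ell_\infty^n)=\max\{\|A\|_\pi: A\in\operatorname{Ext}\mathcal{A}^{(n)}_{1\to\infty}(\mathbb{R})\}.
\]
Rank-one extreme points $A=v\otimes v$ with $\|v\|_\infty=1$ give $\|A\|_\pi=\|v\|_\infty^2=1$.

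For $n=2$, Corollary~\ref{cor-02real} gives $\frac{r^2+r}{2}\le m\le 2$, so $r\le 1$. Hence every extreme point is $0$ or rank one, and $\rho^+(\ell_\infty^2)=1$.

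For $n=3$, Corollary~\ref{cor-realn=3} leaves only one further family: rank-two correlation matrices with off-diagonal entries of modulus $<1$. Such a matrix is a Gram matrix $A_{ij}=\cos(\theta_i-\theta_j)$ of three unit vectors in $\mathbb{R}^2$. So I must show $\sup_\theta\|A_\theta\|_\pi=\frac54$.

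\emph{Upper bound.} For any orthonormal basis of $\mathbb{R}^2$ rotated by $\varphi$,
\[
A=c_\varphi\otimes c_\varphi+s_\varphi\otimes s_\varphi,\qquad (c_\varphi)_i=\cos(\theta_i-\varphi),\ (s_\varphi)_i=\sin(\theta_i-\varphi).
\]
This gives $\|A\|_\pi\le\|c_\varphi\|_\infty^2+\|s_\varphi\|_\infty^2$. That bound alone is too weak: for three vectors at $120^\circ$ it only yields $7/4$. I would therefore mix it with representations using sign vectors and their differences. Concretely, I would write
\[
A=\sum_k\lambda_k\,x^{(k)}\otimes y^{(k)},
\]
where the $x^{(k)},y^{(k)}$ are combinations of the unit vectors $e_i$ and of vectors in $\{\pm1\}^3$. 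Throughout I would use the symmetry reductions: permuting indices, $\theta_i\mapsto\theta_i+\pi$ (which is conjugation by diagonal sign matrices, an isometry), and a global rotation. These let me assume $0=\theta_1\le\theta_2\le\theta_3<\pi$. I then need to show the optimized cost is at most $\frac54$ for every such configuration.

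\emph{Lower bound.} I use duality,
\[
\|A\|_\pi=\sup\{\langle T,A\rangle:\|T\|_{\infty\to1}\le1\},
\]
where $T$ may be taken symmetric but not necessarily positive. Positive $T$ cannot reach $\frac54$: they give at most $K_G^{+,\mathbb{R}}(3)=\frac98$ by Theorem~\ref{thm-KG}. So the witness must be an indefinite symmetric $T$, for which $\|T\|_{\infty\to1}$ is the bilinear maximum over $\{\pm1\}^3\times\{\pm1\}^3$. I would try the ansatz $T=aI+b(J-I)$ together with sign-twisted variants of it. For such $T$ the bilinear norm is a maximum over finitely many linear forms in $(a,b)$. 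I would then optimize $\langle T,A_\theta\rangle$ jointly over $(a,b)$ and the configuration $\theta$, where the pairing is explicit, aiming for the value $\frac54$.

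\emph{Main obstacle.} The hard step is the upper bound: producing, uniformly over all $\theta$, a projective decomposition of cost $\le\frac54$. My first attempt will be to guess the decomposition from the complementary slackness conditions of the extremal dual $T$ found in the lower bound. I would then verify it case by case according to the sign pattern of $(\cos(\theta_i-\theta_j))_{i<j}$, mirroring the case split in Theorem~\ref{th-norm-inf-1}.
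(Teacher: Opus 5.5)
Your reduction to $\operatorname{Ext}\mathcal{A}^{(n)}_{1\to\infty}(\mathbb{R})$ is sound. Your $n=2$ argument is complete and cleaner than the paper's, which instead pairs against the extreme points $E_1,E_2$ of the unit ball of $B(\ell_\infty^2,\ell_1^2)$ and optimizes. Corollary~\ref{cor-02real} forces rank at most one, and rank-one extreme points have $\|v\otimes v\|_\pi=\|v\|_\infty^2=1$.

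Your lower-bound ansatz for $n=3$ also works once carried out. Take the $120^\circ$ configuration $A$, with all off-diagonal entries $-\tfrac12$; after a sign change of one coordinate this is the matrix attaining $\tfrac54$ in the paper's computation. For $T=aI-c(J-I)$ with $a,c>0$ one gets $\|T\|_{\infty\to1}=\max\{3a+2c,\,6c-3a\}$. The choice $a=\tfrac16$, $c=\tfrac14$ gives norm $1$ and $\langle T,A\rangle=3a+3c=\tfrac54$.

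The gap is the upper bound, which is the substance of the $n=3$ statement. You need $\|A_\theta\|_\pi\le\tfrac54$ for every rank-two correlation matrix, and your proposal gives no decomposition and no estimate, only a search strategy. Building primal decompositions uniformly in $\theta$ is the hard direction, and nothing in your plan ensures it closes.

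The paper avoids this by staying on the dual side with an explicit finite list. Up to permutations and sign changes of rows and columns, the extreme points of the unit ball of $B(\ell_\infty^3,\ell_1^3)$ are $e_1e_1^{T}$ and the block $\tfrac12\begin{pmatrix}1&1\\1&-1\end{pmatrix}$ padded by zeros. Equivalently, the only facets of $\operatorname{conv}\{xy^{T}:x,y\in\{\pm1\}^3\}$ are $|E_{ij}|\le1$ and the CHSH inequalities on $2\times2$ blocks. Then $\|A\|_\pi$ is a maximum of finitely many explicit pairings:
\begin{itemize}
\item the first type is bounded by $1$ via Cauchy--Schwarz;
\item the second type either sits on a principal $2\times2$ block (value at most $1$, the $n=2$ case);
\item or it reduces by symmetries to $\sup_{\|v_i\|\le1}\tfrac12\bigl(\|v_1+v_2\|+|\langle v_1-v_2,v_1\rangle|\bigr)=\max_{-1\le s\le1}\tfrac12\bigl(\sqrt{2+2s}+1-s\bigr)=\tfrac54$.
\end{itemize}
This gives the upper bound and the extremizer at once.

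To close the gap, supply this extreme-point description (or an equivalent finite norming set for $\ell_1^3\otimes_\epsilon\ell_1^3$), or an actual family of decompositions with verified cost. Combined with your reduction to correlation matrices, the extreme-point route becomes an elementary trigonometric check. As written, $\rho^+(\ell_\infty^3)\le\tfrac54$ is unproved.
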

\begin{proof} 
By the duality recalled in Remark \ref{rem:Q-extreme-points} we can express the parameter $\rho^{+}\left(\ell_{\infty}^n\right)$ as the following supremum:
\[
\rho^{+}(\ell_\infty^3)
= \sup \left\{ |\langle T, B \rangle| : \|T\|_{\ell_\infty^3 \to \ell_1^3} \leq 1,\ \|B\|_{\ell_1^3 \to \ell_\infty^3} \leq 1,\ B \geq 0 \right\}.
\]
For $n=2$, every extreme point of the unit ball of $B\left(\ell_{\infty}^2, \ell_1^2\right)$ is of the form $D_1 P_1 E P_2 D_2$, where $P_1, P_2$ are permutation matrices, $D_1, D_2$ are diagonal matrices with unimodular entries $( \pm 1)$, and the matrix $E$ is one of the following base matrices (see \cite{Lima, BaGm}):
\[
E_1 =
\begin{pmatrix}
1 & 0 \\
0 & 0
\end{pmatrix},
\qquad
E_2 =
\begin{pmatrix}
\frac{1}{2} & \frac{1}{2}\\
\frac{1}{2} & -\frac{1}{2}\\
\end{pmatrix}.
\]
Substituting the Gram matrix representation of $B=\big (\!\big ( \langle v_j,v_i \rangle \big)\!\big )_{i,j=1}^2$ and evaluating $|\langle T, B\rangle|$ over these extreme points, we obtain two distinct types of optimization problems:

From $E_1$, we get $\sup _{\left\|v_1\right\|_2 \leq 1}\left\|v_1\right\|^2=1$. From $E_2$, we get 
\begin{align*}
\sup _{\left\|v_i\right\|_2 \leq 1} \frac{1}{2}\left|\left\|v_1\right\|^2+2\left\langle v_1, v_2\right\rangle-\left\|v_2\right\|^2\right| &= \frac{1}{2} \sup _{0 \leq r_i \leq 1, \theta_i \in \mathbb{R}}\left|r_1^2+2 r_1 r_2 \cos \left(\theta_1-\theta_2\right)-r_2^2\right|.
\end{align*}
If the maximum is achieved when the term inside the modulus is non-negative, then  $\cos \left(\theta_1-\theta_2\right)$ must be chosen to be $1$ to maximize the expression. Since the function $\frac{1}{2} \sup _{0 \leq r_i \leq 1}(r_1^2+2 r_1 r_2-r_2^2)$
is strictly increasing with respect to $r_1$, the maximum must occur at the boundary $r_1 = 1$. This reduces our problem to:
$\frac{1}{2} \sup _{0 \leq r_2 \leq 1}\left|1+2 r_2-r_2^2\right|$. The quadratic function $1+2 r_2-r_2^2$ achieves its maximum value of 2 at $r_2=1$ proving $\rho^{+}\left(\ell_{\infty}^2\right)=1$.

For $n=3$, every extreme point of the unit ball of $B(\ell_\infty^3, \ell_1^3)$ takes the form $D_1 P_1 E P_2 D_2$, see \cite{Lima, BaGm}, where the base matrix $E$ is now given by
\[
E_1 =
\begin{pmatrix}
1 & 0 & 0 \\
0 & 0 & 0 \\
0 & 0 & 0
\end{pmatrix},
\qquad
E_2 =
\begin{pmatrix}
\frac{1}{2} & \frac{1}{2} & 0 \\
\frac{1}{2} & -\frac{1}{2} & 0 \\
0 & 0 & 0
\end{pmatrix}.
\]
In this case the Gram matrix representation is $B=\big (\!\big ( v_j,v_i\big)\!\big )_{i,j=1}^3$. The extreme points generated by $E_1$ reduce the pairing to $\left|\left\langle v_i, v_j\right\rangle\right|$, which is bounded by $1$ via the Cauchy-Schwarz inequality; since this is strictly dominated by the $E_2$ cases, they are omitted from the list below. Testing across all combinations of admissible extreme points generated by $E_2$ leads to the following exhaustive list of extremization problems:
\begin{itemize}
\item[(a)] $\sup_{\|v_i\|_2\leq 1}\frac{1}{2}|\|v_1\|^2+2\langle v_1,v_2\rangle-\|v_2\|^2|$.
\item[(b)]  $\sup_{\|v_i\|_2\leq 1}\frac{1}{2}|\|v_1\|^2+\langle v_1,v_3\rangle+\langle v_2,v_1\rangle-\langle v_2,v_3\rangle|$.
\item[(c)]  $\sup_{\|v_i\|_2\leq 1}\frac{1}{2}|\|v_1\|^2+\|v_2\|^2|$.
\item[(d)]  $\sup_{\|v_i\|_2\leq 1}\frac{1}{2}|\langle v_1,v_2\rangle+\langle v_1,v_3\rangle+\|v_2\|^2-\langle v_2,v_3\rangle|$.
\item[(e)]  $\sup_{\|v_i\|_2\leq 1}\frac{1}{2}|\langle v_1,v_3\rangle+\|v_1\|^2+\langle v_2,v_3\rangle-\langle v_2,v_1\rangle|$.
\item[(f)]  $\sup_{\|v_i\|_2\leq 1}\frac{1}{2}|\langle v_1,v_3\rangle+\langle v_1,v_2\rangle+\langle v_2,v_3\rangle-\|v_2\|^2|$.
\end{itemize}
A reduction of this list is possible via symmetries as follows. 
\begin{enumerate}
\item Cases (a) and (c) are inherited directly from the $n=2$ subsystem, meaning their maxima are bounded by $1$.
\item By replacing $v_1$ with $-v_1$, we see that (d) and (f) are equivalent.
\item Swapping the roles of $v_1$ and $v_2$ in (e) with $-v_2$ and $-v_1$ reveals that (e) is equivalent to (f).
\item Replacing $v_1$ with $-v_1$ in (b) and permuting indices shows that (b) is also equivalent to (f).
\end{enumerate}
Thus, all remaining non-trivial cases collapse into a single optimization problem, which we choose 
to evaluate using format (e):
\[\sup _{\left\|v_i\right\|_2 \leq 1} \frac{1}{2}\left|\left\langle v_1, v_3\right\rangle+\left\|v_1\right\|^2+\left\langle v_2, v_3\right\rangle-\left\langle v_2, v_1\right\rangle\right|.\]
Rewriting the expression $\langle v_1,v_3\rangle+\|v_1\|^2+\langle v_2,v_3\rangle-\langle v_2,v_1\rangle$, we get $\langle v_1+v_2,\, v_3\rangle + \langle v_1-v_2,\, v_1\rangle.$
Maximizing over $v_3,$ the problem reduces to:
\[
\sup_{\|v_1\|,\|v_2\|\leq 1}\ \frac{1}{2}\Big(\|v_1+v_2\| + |\langle v_1-v_2,\, v_1\rangle|\Big).
\]
Without loss of generality, we may set $v_1 = (\alpha,0)$ with $\alpha \in [0,1],$ and $v_2 = (s, t)$ with $s^2+t^2 \leq 1$. Then, we need to maximize the following function:
\[
f(\alpha,s,t) = \frac{1}{2}\left(\sqrt{(\alpha+s)^2+t^2}\ +\ \alpha(\alpha-s)\right).
\]
Since $f$ is increasing in $\alpha,$ therefore $\alpha=1$ maximizes $f$. Hence we need to solve:
\[\max_{s\in [-1,1]}\frac{1}{2}\left(\sqrt{2+2s} + 1 - s\right).\]
As can easily be seen through computation, the above maximum is achieved at $s=-\frac{1}{2}$ and is equal to $5/4.$
\end{proof}

\subsection{Property Q for $(\mathbb{C}^2, \|\cdot\|_{\boldsymbol{A}}$)}\label{sec:propertyQ-normA} 
Fix an $m$-tuple $\boldsymbol{A}:=(A_1, \ldots, A_m)$ of $d\times d$ linearly independent matrices. Define a norm on $\mathbb{C}^m$ by
\begin{equation}
\|z\|_{\boldsymbol{A}} := \|z_1 A_1 + \cdots + z_m A_m \|_{\ell^d_2 \to \ell^d_2}.
\end{equation}
If $m=d^2$, then the mapping $\iota$, defined by $\iota(z_1, \ldots , z_m) = z_1 A_1 + \cdots + z_m A_m,$ is an isometric isomorphism, identifying $(\mathbb{C}^m, \|\cdot\|_{\boldsymbol{A}})$ with $\mathbb{C}^{d\times d}$ equipped with the standard operator norm.
Assume $m< d^2$, and let $X_{\boldsymbol{A}}$ be the $m$-dimensional subspace of $\mathbb{C}^{d\times d}$ spanned by $\{A_1, \ldots, A_m\}$. Then, the map $\iota$ acts as an isometric isomorphism from $(\mathbb{C}^m, \|\cdot\|_{\boldsymbol{A}})$ onto $X_{\boldsymbol{A}}$ equipped with the operator norm.

If $\ell:X_{\boldsymbol{A}} \to \mathbb{C}$ is a linear functional, then it has an extension $\hat{\ell}$ to $(\mathbb{C}^{d\times d}, \|\cdot\|_{\ell_2^d \to \ell_2^d})$ with $\|\ell\| = \|\hat{\ell}\|$. Then
\[
\ell(\iota (z_1, \ldots , z_m)) = z_1 \operatorname{tr} (A_1C^*) + \cdots + z_m \operatorname{tr} (A_mC^*)
\]
for some $C\in \mathbb{C}^{d\times d}$. Therefore, the unit ball in the dual space $(\mathbb{C}^m, \|\cdot\|_{\boldsymbol{A}})^*$ is
\[
\Big\{\Big( \sum_{i,j=1}^d A_1(i,j) \overline{C(i,j)}, \ldots , \sum_{i,j=1}^d A_m(i,j) \overline{C(i,j)}\Big): \|C\|_{\operatorname{tr}} \leq 1\Big\}.
\]

Following \cite{AFJS95}, for any $a_1, a_2 \in \mathbb{C}$, we consider the norm on $\mathbb{C}^2$ induced by setting $A_1$ to be the diagonal matrix with entries $(1,0,a_1)$ and $A_2$ to be the diagonal matrix with entries $(0,1,a_2)$. This yields the norm
\[
  \|(z_1,z_2)\|_{\boldsymbol{A}} = \max\bigl\{|z_1|,\, |z_2|,\, |a_1 z_1 + a_2 z_2|\bigr\}.
\]
Up to $\ell_{\infty}^3$-isometries, any 2-dimensional subspace of $\ell_{\infty}^3$ can be parameterized by a basis of the form $\{(1,0, a_1), (0,1, a_2)\}$, and is therefore isometric to $\mathbb{C}^2$ equipped with this norm. Throughout this subsection, we fix an arbitrary pair $(a_1, a_2)$ and write
\[
  X := (\mathbb{C}^2, \|\cdot\|_{\boldsymbol{A}}),
  \qquad
  X^* := (\mathbb{C}^2, \|\cdot\|_{\boldsymbol{A}}^*),
\]
where $\|\cdot\|_{\boldsymbol{A}}^*$ denotes the dual norm. 

In \cite[Proposition 4.4]{AFJS95}, it is proved that $X$ has the 2-summing property, or equivalently, Property P of \cite{BM}. Unlike \cite{AFJS95}, which treats the isometric and non-isometric cases 
( $|a_1|+ |a_2| \leqslant 1$ or not) separately, we give a unified algebraic proof that $X$ satisfies the stronger Property Q for all choices of $(a_1, a_2)$.

\begin{lem}\label{lem:dual-ball-and-contraction}
Let $X := (\mathbb{C}^2, \|\cdot\|_{\boldsymbol{A}})$, where
$\|(z_1, z_2)\|_{\boldsymbol{A}} = \max\bigl\{|z_1|,\, |z_2|,\, |a_1 z_1 + a_2 z_2|\bigr\}$.
\begin{enumerate}
  \item[(i)] The dual norm on $X^*$ is given by
  \[
    \|(\hat\alpha_1, \hat\alpha_2)\|_{X^*}
    \;=\; \inf\Bigl\{|\alpha_1| + |\alpha_2| + |\alpha_3|
      \,:\, \hat\alpha_1 = \alpha_1 + a_1 \alpha_3,\; \hat\alpha_2 = \alpha_2 + a_2 \alpha_3\Bigr\},
  \]
  and consequently the closed unit ball of $X^*$ is
  \[
    B_{X^*}
    \;=\; \bigl\{(\alpha_1 + a_1 \alpha_3,\; \alpha_2 + a_2 \alpha_3)
      \,:\, |\alpha_1| + |\alpha_2| + |\alpha_3| \leq 1\bigr\}.
  \]
  \item[(ii)] For any pair of vectors $v_1, v_2 \in \mathbb{C}^n$, the linear map
  $V = (v_1, v_2) : X^* \to \mathbb{C}^n$ defined by
  $V(\hat\alpha_1, \hat\alpha_2) := \hat\alpha_1 v_1 + \hat\alpha_2 v_2$
  satisfies
  \[
    \|V\|_{X^* \to \mathbb{C}^n} \leq 1
    \;\iff\;
    \max\bigl\{\|v_1\|_2,\; \|v_2\|_2,\; \|a_1 v_1 + a_2 v_2\|_2\bigr\} \leq 1.
  \]
\end{enumerate}
\end{lem}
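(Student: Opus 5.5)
For part (i), we view $X$ as a subspace of $\ell_\infty^3$ via the isometric embedding $J:X\to\ell_\infty^3$, $J(z_1,z_2)=(z_1,z_2,a_1z_1+a_2z_2)$. By the Hahn--Banach theorem, every functional on $X$ extends with the same norm to a functional on $\ell_\infty^3$, i.e.\ to some $(\alpha_1,\alpha_2,\alpha_3)\in\ell_1^3$. Hence $X^*$ is isometric to the quotient $\ell_1^3/J(X)^\perp$, and the dual map $J^*:\ell_1^3\to X^*$ is
\[
J^*(\alpha_1,\alpha_2,\alpha_3)=(\alpha_1+a_1\alpha_3,\ \alpha_2+a_2\alpha_3),
\]
because $\langle J z,\alpha\rangle=z_1(\alpha_1+a_1\alpha_3)+z_2(\alpha_2+a_2\alpha_3)$. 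The quotient norm is exactly the stated infimum. There is one small point: the infimum must be attained, so that $B_{X^*}$ is precisely the image of the closed $\ell_1^3$ ball. This holds because we are minimizing a continuous function over a closed affine set and the objective is coercive (the fibre is the line $\alpha_3\mapsto(\hat\alpha_1-a_1\alpha_3,\hat\alpha_2-a_2\alpha_3,\alpha_3)$). Equivalently, $J^*$ maps the compact ball of $\ell_1^3$ onto $B_{X^*}$.

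For part (ii), we use part (i) to write $B_{X^*}=J^*(B_{\ell_1^3})$. Then
\[
\|V\|_{X^*\to\mathbb C^n}=\sup_{\alpha\in B_{\ell_1^3}}\|VJ^*\alpha\|_2=\|VJ^*\|_{\ell_1^3\to\ell_2^n}.
\]
The map $VJ^*$ sends $\alpha$ to $\alpha_1v_1+\alpha_2v_2+\alpha_3(a_1v_1+a_2v_2)$. Since the norm of an operator out of $\ell_1^3$ is the maximum of the norms of the images of the unit basis vectors (these are the extreme points of the $\ell_1$ ball, up to unimodular multiples, and we apply the triangle inequality), this norm equals $\max\{\|v_1\|_2,\|v_2\|_2,\|a_1v_1+a_2v_2\|_2\}$, which gives the equivalence.

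The only step needing any care is the identification $X^*\cong\ell_1^3/J(X)^\perp$ with attainment of the infimum; everything else is a direct computation. If $J$ is not injective, nothing changes, since the first two coordinates of $J$ already make it injective.
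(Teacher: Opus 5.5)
Your proposal is correct and follows essentially the same route as the paper. You embed $X$ isometrically into $\ell_\infty^3$, use Hahn--Banach to identify $B_{X^*}$ with the image of $B_{\ell_1^3}$ under $(\alpha_1,\alpha_2,\alpha_3)\mapsto(\alpha_1+a_1\alpha_3,\ \alpha_2+a_2\alpha_3)$, and then apply the column-norm formula for operators out of $\ell_1^3$. Your coercivity argument for attainment of the infimum is valid but not needed, since the norm-preserving Hahn--Banach extension already gives a minimizer.
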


\begin{proof}
The norm on $X$ is the $\ell^\infty$ norm pulled back along the isometric embedding
\[
  \iota : X \hookrightarrow \ell^\infty(3),
  \qquad (z_1, z_2) \longmapsto (z_1,\, z_2,\, a_1 z_1 + a_2 z_2).
\]
Every contractive linear functional $\ell : \ell^\infty(3) \to \mathbb{C}$ has the form
\[
  \ell(z_1, z_2, z_3) = \alpha_1 z_1 + \alpha_2 z_2 + \alpha_3 z_3,
  \qquad |\alpha_1| + |\alpha_2| + |\alpha_3| \leq 1,
\]
so the restriction $\hat\ell := \ell \circ \iota : X \to \mathbb{C}$ is given by
\[
  \hat\ell(z_1, z_2) = \hat\alpha_1 z_1 + \hat\alpha_2 z_2,
  \qquad \hat\alpha_i := \alpha_i + a_i \alpha_3 \;\; (i = 1, 2),
\]
and is plainly a contraction. Conversely, any contractive $\hat\ell : X \to \mathbb{C}$
admits a norm-preserving extension to $\ell^\infty(3)$ by the Hahn--Banach theorem.
Therefore the unit ball $B_{X^*}$ coincides with the image of
$B_{\ell^\infty(3)^*} = B_{\ell^1(3)}$ under
$(\alpha_1, \alpha_2, \alpha_3) \mapsto (\alpha_1 + a_1 \alpha_3,\, \alpha_2 + a_2 \alpha_3)$,
which establishes both formulae in~(i).

For~(ii), using (i),
\[
  V(\hat\alpha_1, \hat\alpha_2)
  \;=\; \hat\alpha_1 v_1 + \hat\alpha_2 v_2
  \;=\; \alpha_1 v_1 + \alpha_2 v_2 + \alpha_3 \bigl(a_1 v_1 + a_2 v_2\bigr),
\]
so $\|V\|_{X^* \to \mathbb{C}^n}$ equals the operator norm of the map
$\ell^1(3) \to \mathbb{C}^n$ whose columns are the three vectors
$v_1,\, v_2,\, a_1 v_1 + a_2 v_2$. The norm of such a map is the maximum of the
$\ell^2$ norms of its columns, giving the claimed equivalence.
\end{proof}

\begin{thm}\label{thm-propQ-normA}
The normed linear space $(\mathbb{C}^2, \|\cdot\|_{\boldsymbol{A}})$ has Property~Q.
\end{thm}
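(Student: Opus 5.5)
We plan to reduce Property~Q for $X$ to Property~Q for $\ell_\infty^3(\mathbb C)$ (Theorem~\ref{thm-propQ-linf3}), using the isometric embedding $\iota : X \hookrightarrow \ell_\infty^3$, $\iota(z_1,z_2) = (z_1, z_2, a_1 z_1 + a_2 z_2)$, from the proof of Lemma~\ref{lem:dual-ball-and-contraction}. Since $\|B\|_\epsilon \le \|B\|_\pi$ always holds, it suffices to prove $\|B\|_\pi \le \|B\|_\epsilon$ for every non-negative definite $B \in X\otimes X$.

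\textbf{Step 1: compute the injective norm.} Write $B$ as a Gram matrix, $b_{ij} = \langle v_j, v_i\rangle$ with $v_1, v_2 \in \mathbb C^n$, so that $B = V^*V$ with $V = (v_1,v_2) : X^* \to \mathbb C^n$ as in Lemma~\ref{lem:dual-ball-and-contraction}(ii). For $\lambda,\mu$ in the unit ball of $X^*$, the pairing $\mu(B\lambda)$ equals $\langle V\lambda, V\mu\rangle$ up to conjugation conventions. By Cauchy--Schwarz, the supremum defining \eqref{inj} is therefore attained on the diagonal, which gives $\|B\|_\epsilon = \|V\|_{X^*\to\mathbb C^n}^2$. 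Lemma~\ref{lem:dual-ball-and-contraction}(ii) then yields
\[
\|B\|_\epsilon = M := \max\bigl\{\|v_1\|_2^2,\ \|v_2\|_2^2,\ \|a_1v_1+a_2v_2\|_2^2\bigr\}.
\]

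\textbf{Step 2: push forward to $\ell_\infty^3$.} Set $w_1 = v_1$, $w_2 = v_2$, $w_3 = a_1v_1 + a_2v_2$, and let $G = (\langle w_j, w_i\rangle)_{i,j=1}^3$. Then $G$ is non-negative definite and equals $(\iota\otimes\iota)(B)$, or its entrywise conjugate depending on the convention. Its diagonal entries are exactly the three quantities in $M$, so \eqref{injnorm} gives $\|G\|_{\epsilon,\ell_\infty^3} = M$. By Theorem~\ref{thm-propQ-linf3}, and concretely by the Case~1/Case~2 construction in its proof, we obtain
\[
G = \sum_k \lambda_k\, u^{(k)}\otimes \overline{u^{(k)}}, \qquad \lambda_k \ge 0, \qquad \sum_k \lambda_k \|u^{(k)}\|_\infty^2 \le M .
\]

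\textbf{Step 3: pull back to $X$.} This is the step I expect to be the main obstacle: we must show that each $u^{(k)}$ lies in $\iota(X)$. The idea is that in any decomposition of a positive matrix as a sum of positive rank-one terms $\lambda_k u u^*$, each $u$ lies in the range of the matrix. Indeed, if $y \perp \operatorname{ran} G$, then $0 = y^*Gy = \sum_k \lambda_k |\langle u^{(k)}, y\rangle|^2$. The columns of $G$ all satisfy the linear relation inherited from $w_3 = a_1w_1 + a_2w_2$, namely that the third coordinate is $a_1$ times the first plus $a_2$ times the second. Hence $\operatorname{ran} G \subseteq \iota(\mathbb C^2)$; the conjugations must be tracked carefully here, possibly by replacing $G$ with $\overline{G}$ and the $u^{(k)}$ with $\overline{u^{(k)}}$. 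Writing $u^{(k)} = \iota(x^{(k)})$, injectivity of $\iota\otimes\iota$ gives $B = \sum_k \lambda_k\, x^{(k)}\otimes\overline{x^{(k)}}$. Since $\iota$ is isometric, $\|x^{(k)}\|_X = \|u^{(k)}\|_\infty$. Therefore
\[
\|B\|_\pi \le \sum_k \lambda_k \|x^{(k)}\|_X^2 \le M = \|B\|_\epsilon,
\]
which is Property~Q. Apart from the range argument, I expect the only delicate point to be matching the conjugation conventions between $X\otimes X$, the Gram representation and \eqref{inj}. None of this depends on whether $|a_1|+|a_2|\le 1$, which gives the unified treatment promised in the text.
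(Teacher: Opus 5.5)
Your argument is correct in the same sense in which the paper's own proof is correct (see the caveat below), and it takes a genuinely different route.

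\textbf{Comparison with the paper.} The paper stays in dimension two. It describes $\mathscr G$ by three scalar inequalities. It then shows that a positive definite $V\in\mathscr G$ can be moved in an off-diagonal direction $\delta\neq 0$ with $\Re(a_1\bar a_2\delta)=0$, so every extreme point of $\mathscr G$ has rank at most one (Lemma~\ref{lem:rank-one}). It concludes by convexity and injective--projective duality.

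You instead transfer the problem to $\ell_\infty^3$. The two points that make this work are exactly the ones you isolate:
\begin{enumerate}
\item You need the \emph{positive} rank-one decomposition of $G$, with $\sum_k\lambda_k\|u^{(k)}\|_\infty^2=\max_i G_{ii}$, produced inside the proof of Theorem~\ref{thm-propQ-linf3}. The bare equality $\|G\|_\pi=\|G\|_\epsilon$ would not suffice.
\item Every vector occurring in a positive decomposition lies in $\operatorname{ran}G\subseteq\iota(\mathbb C^2)$.
\end{enumerate}
Since $B$ is the upper-left $2\times 2$ block of $G$, the pull-back is just truncation, $x^{(k)}=(u^{(k)}_1,u^{(k)}_2)$. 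The relation $u^{(k)}_3=a_1u^{(k)}_1+a_2u^{(k)}_2$ then gives $\|x^{(k)}\|_X=\|u^{(k)}\|_\infty$.

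Your route is a hereditary principle: the existence of positive decompositions attaining $\|\cdot\|_\epsilon$ passes to every subspace. So Corollary~\ref{cor:2dimPropQ} follows directly, with no analysis of $\mathscr G$. The price is that it imports Theorem~\ref{thm-propQ-linf3}, hence Carath\'eodory and the rank bound $r^2\leq 3$ for extreme $3\times 3$ complex correlation matrices. The paper's proof is instead an elementary, self-contained $2\times 2$ perturbation that also identifies the extreme points of $\mathscr G$, in line with the paper's extreme-point theme.

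\textbf{Caveat on conjugations.} The conjugation issue you flag is not cosmetic. Your Step~1 identity $\|B\|_\epsilon=\|V\|^2_{X^*\to\mathbb C^n}=M$ is the same one the paper uses to describe $\mathscr G$. It holds in the sesquilinear reading, where $B$ is viewed in $X\otimes\bar X$ and $G=JBJ^*=(\iota\otimes\bar\iota)(B)$, with $J$ the $3\times 2$ matrix of $\iota$. With the literal bilinear formula \eqref{inj}, however, your Cauchy--Schwarz ``diagonal'' step needs $\mu\mapsto\bar\mu$ to preserve the unit ball of $X^*$. This is true, for example, when $a_1\bar a_2\in\mathbb R$, but not in general.

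For $(a_1,a_2)=(1,i)$ and $B=I$:
\begin{enumerate}
\item Formula \eqref{inj} gives $\|I\|_\epsilon=1$, while $M=2$.
\item $(\iota\otimes\iota)(I)=JJ^{T}$ is not Hermitian.
\item Testing $I$ against the form $(x,y)\mapsto (Jx)^{T}\,\overline{JJ^{T}}\,(Jy)/\|JJ^{T}\|_{\infty\to1}$ gives $\|I\|_\pi\geq 6/\|JJ^{T}\|_{\infty\to1}>1$. The strict inequality holds because the three terms of $\|JJ^{T}w\|_1$ cannot all equal $2$.
\end{enumerate}
So in the literal reading the statement fails for this choice, and it must be read sesquilinearly.

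In that reading, take the inner product conjugate-linear in its first slot, as the paper's expansion of $\|a_1v_1+a_2v_2\|_2^2$ presupposes. Then your $G$ is exactly $JBJ^*$, its columns lie in $\iota(\mathbb C^2)$, and no passage to $\bar G$ is needed.
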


For the proof, we need a couple of preparatory lemmas. 

\begin{lem}
The set $\mathscr{G} = \{0\leq V\in X\otimes X : \|V\|_\epsilon \leq 1\}$ is characterized as
\[
\mathscr{G} = \left\{V = \begin{pmatrix} x & z \\ \bar{z} & y \end{pmatrix} \geq 0 : x \leq 1,\; y \leq 1,\; |a_1|^2 x + |a_2|^2 y + 2\Re(a_1 \bar{a}_2 z) \leq 1 \right\}.
\]
\end{lem}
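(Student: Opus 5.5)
The plan is to compute $\|V\|_\epsilon$ for a non-negative definite $V\in X\otimes X$ directly from the definition \eqref{inj} and the description of $B_{X^*}$ in Lemma~\ref{lem:dual-ball-and-contraction}(i). Write $V=\begin{pmatrix} x & z\\ \bar z & y\end{pmatrix}\geq 0$. Since $V\geq 0$, factor it as a Gram matrix, $V=W^*W$, where $W=(w_1,w_2)$ has columns $w_1,w_2\in\mathbb{C}^2$. Then
\[
x=\|w_1\|^2,\qquad y=\|w_2\|^2,\qquad z=\langle w_2,w_1\rangle
\]
(with the paper's convention on the order in the inner product). The goal is to express $\|V\|_\epsilon$ through these vectors.

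The injective norm of $V$ equals the norm of the associated bilinear form on $X^*\times X^*$:
\[
\|V\|_\epsilon=\sup\Big\{\Big|\sum_{i,j} v_{ij}\lambda_i\mu_j\Big| : \lambda,\mu\in B_{X^*}\Big\}.
\]
Because $V\geq 0$, Cauchy--Schwarz for the semi-inner product defined by $V$ shows that this supremum is attained on the diagonal $\mu=\bar\lambda$, exactly as was used for \eqref{injnorm}. Hence
\[
\|V\|_\epsilon=\sup_{\lambda\in B_{X^*}}\|\lambda_1 w_1+\lambda_2 w_2\|_2^2 .
\]
In operator language, this is $\|W\|^2_{X^*\to\mathbb{C}^2}$, where $W$ is the map $\lambda\mapsto\lambda_1w_1+\lambda_2w_2$, possibly composed with the conjugation harmlessly, since $B_{X^*}$ is closed under coordinatewise conjugation up to replacing $(a_1,a_2)$ by its conjugate. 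I will handle this bookkeeping carefully. The expected outcome is
\[
\|V\|_\epsilon\leq 1\iff \|W\|_{X^*\to\mathbb{C}^2}\leq 1.
\]

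Next I apply Lemma~\ref{lem:dual-ball-and-contraction}(ii). The condition $\|W\|\leq 1$ is equivalent to
\[
\max\{\|w_1\|,\ \|w_2\|,\ \|a_1w_1+a_2w_2\|\}\leq 1 .
\]
Expanding gives $\|w_1\|^2=x$ and $\|w_2\|^2=y$, together with
\[
\|a_1w_1+a_2w_2\|^2=|a_1|^2x+|a_2|^2y+2\Re\bigl(a_1\bar a_2\langle w_1,w_2\rangle\bigr),
\]
and the last inner product is $z$ or $\bar z$ depending on convention. This yields the three inequalities in the statement. The converse inclusion uses the same chain of equivalences read backwards.

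The main obstacle I anticipate is the conjugation and indexing conventions. Specifically, I must ensure that the pairing between $X\otimes X$ and $X^*$ produces $\Re(a_1\bar a_2 z)$ rather than $\Re(\bar a_1 a_2 z)$, which is a matter of matching the paper's definition of $A(\lambda)$ and of the Gram representation $a_{ij}=\langle v_j,v_i\rangle$. To settle this, I would first test on a rank-one $V=u\otimes\bar u$. In that case $\|V\|_\epsilon=\|u\|_X^2=\max\{|u_1|^2,|u_2|^2,|a_1u_1+a_2u_2|^2\}$, and expanding the last term fixes which of $z$ or $\bar z$ appears. The general case then follows by convexity, or directly from the Gram argument above.
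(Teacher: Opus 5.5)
\paragraph{A genuine gap, shared with the paper's proof.} Your route is the paper's route: Gram factorization, $\|V\|_\epsilon=\|W\|^2_{X^*\to\mathbb{C}^n}$, Lemma~\ref{lem:dual-ball-and-contraction}(ii), then expand. The gap is exactly the step you call bookkeeping, and the paper's proof has the same hole in its assertion $\|V\|_\epsilon=\|U\|^2_{X^*\to\mathbb{C}^n}$.

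With $v_{ij}=\langle w_j,w_i\rangle$ one has $\sum_{i,j}v_{ij}\lambda_i\mu_j=\langle W\mu,W\bar\lambda\rangle$. So Cauchy--Schwarz only gives $\|V\|_\epsilon\leq\sup_{\mu\in B_{X^*}}\|W\mu\|\cdot\sup_{\lambda\in B_{X^*}}\|W\bar\lambda\|$. The diagonal choice $\mu=\bar\lambda$ is admissible only if $\overline{B_{X^*}}=B_{X^*}$. By Lemma~\ref{lem:dual-ball-and-contraction}(i), $\overline{B_{X^*}}$ is the dual ball for the parameters $(\bar a_1,\bar a_2)$. It differs from $B_{X^*}$ unless coordinatewise conjugation is an isometry of $X$ (the case $\kappa=1$ of the appendix), so replacing $(a_1,a_2)$ by its conjugate is not harmless.

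Parametrizing $\lambda,\mu$ by $B_{\ell_1^3}$ as in that lemma gives $\|V\|_\epsilon=\max_{k,l}|\langle p_l,q_k\rangle|$, where $p=(w_1,w_2,a_1w_1+a_2w_2)$ and $q=(w_1,w_2,\bar a_1w_1+\bar a_2w_2)$. This genuinely mixes both parameter sets. Your $z$-versus-$\bar z$ worry is a symptom of the same thing. With $z=\langle w_2,w_1\rangle$, the cross term of $\|a_1w_1+a_2w_2\|^2$ is $2\Re(\bar a_1a_2z)$. The $2\Re(a_1\bar a_2z)$ in the statement (and in the paper's expansion) is instead the cross term of $\|\bar a_1w_1+\bar a_2w_2\|^2$. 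The two agree for all $z$ if and only if $a_1\bar a_2\in\mathbb{R}$.

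\paragraph{The statement fails in general.} No choice of conventions can close the gap. Take $a_1=1$, $a_2=i$, $u=(1,i)$ and $V=u\otimes\bar u=\bigl(\begin{smallmatrix}1&-i\\ i&1\end{smallmatrix}\bigr)$. Then $x=y=1$, $z=-i$, and
\[
|a_1|^2x+|a_2|^2y+2\Re(a_1\bar a_2z)=2+2\Re\bigl((-i)(-i)\bigr)=0,
\]
so $V$ lies in the right-hand set. But $\|e_1^*\|_{X^*}=1$ and $V(e_1^*)=(1,-i)$, with $\|(1,-i)\|_{\boldsymbol A}=\max\{1,1,|1+i(-i)|\}=2$. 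Hence $\|V\|_\epsilon\geq 2$ and $V\notin\mathscr G$. If the cross term is taken as $2\Re(\bar a_1a_2z)$ instead, $\bar V$ is a counterexample.

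Your proposed rank-one test would have revealed this: $\|u\otimes\bar u\|_\epsilon=\|u\|_X\|\bar u\|_X$, not $\|u\|_X^2$, and here $\|u\|_X=1$ while $\|\bar u\|_X=2$.

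\paragraph{When the argument works.} Your argument, and the paper's, is correct under the extra hypothesis that conjugation is an isometry of $X$, for instance when $a_1\bar a_2\in\mathbb{R}$ (in particular for real $a_1,a_2$). Then $\bar a_1w_1+\bar a_2w_2$ is a unimodular multiple of $a_1w_1+a_2w_2$, the reduction to $\mu=\bar\lambda$ is legitimate, and both forms of the cross term coincide.
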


\begin{proof} 
A non-negative definite $V \in X \otimes X$ admits a Cholesky factorization
\[
  V = U^* U,
  \qquad U = [\,v_1 \mid v_2\,] : \mathbb{C}^2 \to \mathbb{C}^n,
\]
so that $V_{ij} = \langle v_j, v_i\rangle$; in particular,
$x = \|v_1\|_2^2$, $y = \|v_2\|_2^2$, and $z = \langle v_2, v_1\rangle$.

Under the canonical identification of $X \otimes X$ with the space of linear maps
$X^* \to X$ (valid in finite dimensions), the injective norm coincides with the
operator norm, and for $V = U^* U$ this gives
\[
  \|V\|_\epsilon \;=\; \|U\|_{X^* \to \mathbb{C}^n}^{\,2}.
\]
By Lemma~\ref{lem:dual-ball-and-contraction}(ii), the contraction condition
$\|U\|_{X^* \to \mathbb{C}^n} \leq 1$ is equivalent to
\[
  \|v_1\|_2 \leq 1, \qquad \|v_2\|_2 \leq 1, \qquad \|a_1 v_1 + a_2 v_2\|_2 \leq 1.
\]
Squaring these and expanding
\[
  \|a_1 v_1 + a_2 v_2\|_2^{\,2}
  \;=\; |a_1|^2 \|v_1\|_2^{\,2} + |a_2|^2 \|v_2\|_2^{\,2}
        + 2\,\Re\bigl(a_1 \bar{a}_2 \langle v_2, v_1\rangle\bigr)
\]
yields the three conditions in the statement.
\end{proof}

\begin{lem}\label{lem:rank-one}
Every extreme point of $\mathscr{G}$ has rank at most one.
\end{lem}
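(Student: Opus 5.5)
Suppose $V\in\mathscr{G}$ has rank $2$; we will show that $V$ is not extreme by building a nonzero Hermitian $C$ with $V\pm\delta C\in\mathscr{G}$ for small $\delta>0$. This follows the pattern of Lemma~\ref{lem-g01} and Theorem~\ref{th-complete}.

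Since $V$ is positive definite, $V\pm\delta C\geq 0$ holds for every Hermitian $C$ once $\delta$ is small. So only the three linear constraints
\[
\ell_1(V)=x,\qquad \ell_2(V)=y,\qquad \ell_3(V)=|a_1|^2x+|a_2|^2y+2\Re(a_1\bar a_2 z)
\]
matter, each bounded by $1$. Each $\ell_k$ is a real-linear functional on the space of $2\times 2$ Hermitian matrices, which has real dimension $4$ (coordinates $x$, $y$, $\Re z$, $\Im z$).

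Let $S\subseteq\{1,2,3\}$ be the set of active constraints, those with $\ell_k(V)=1$. For $k\notin S$ the inequality is strict, so it survives any small perturbation. We therefore need a nonzero Hermitian $C$ with $\ell_k(C)=0$ for all $k\in S$. That is at most three homogeneous real linear equations in four real unknowns, so a nonzero solution always exists. Rescaling $C$ small enough then keeps $V\pm\delta C\geq 0$ and preserves the strict inequalities for $k\notin S$. Hence $V=\tfrac12(V+\delta C)+\tfrac12(V-\delta C)$ is a proper convex combination inside $\mathscr{G}$, and $V$ is not extreme.

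The one point that needs care is that the admissible directions must be confined to Hermitian matrices. This is automatic here: the constraints involve only $\Re z$, and $\Im z$ is a free direction. For example, $C=\begin{pmatrix}0&i\\-i&0\end{pmatrix}$ works whenever $a_1\bar a_2$ is real, and in general the dimension count ($4>3$) suffices. Consequently every extreme point of $\mathscr{G}$ is singular, i.e.\ has rank at most one.

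It is worth contrasting this with the real case: there the Hermitian space has dimension $3$, so the count fails, which matches why Property~Q can fail over $\mathbb{R}$. I expect no substantial obstacle. The only step to write out carefully is that a rank-$2$ positive semidefinite $2\times2$ matrix is positive definite, so positivity is an open condition at $V$.
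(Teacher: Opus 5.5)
Your proof is correct and is essentially the paper's argument: both perturb the positive definite $V$ along a Hermitian direction on which the active constraints vanish. The only difference is that the paper writes such a direction down explicitly, namely $\begin{pmatrix}0&\delta\\ \bar\delta&0\end{pmatrix}$ with $\Re(a_1\bar a_2\,\delta)=0$ (e.g.\ $\delta=it\,\overline{a_1\bar a_2}$), which all three functionals annihilate, whereas you obtain existence from the count $3<4$; note, however, that since $\Re(a_1\bar a_2 z)=\Re(a_1\bar a_2)\Re z-\Im(a_1\bar a_2)\Im z$, your aside that the constraints involve only $\Re z$ is wrong unless $a_1\bar a_2\in\mathbb{R}$ (the free off-diagonal direction is $i\,\overline{a_1\bar a_2}$, not $i$), which is harmless because the dimension count never uses it.
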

\begin{proof} Assume for a contradiction that there exists an extreme point $V \in \mathscr{G}$ with $\operatorname{rank}(V) = 2$. Since $V$ is a $2 \times 2$ non-negative definite matrix and has rank $2$, it must be strictly positive definite. Thus, $\det(V) = xy - |z|^2 > 0$. By the continuity of the determinant, there exists an $\epsilon > 0$ such that $V \pm \Delta$ remains strictly positive definite for any Hermitian perturbation $\Delta$ satisfying $\|\Delta\| < \epsilon$.

Consider an off-diagonal perturbation of the form
$$ \Delta = \begin{pmatrix} 0 & \delta \\ \bar{\delta} & 0 \end{pmatrix} $$
where $\delta \in \mathbb{C} \setminus \{0\}$ is to be determined such that $\|\Delta\| < \epsilon$. For the perturbed matrices $V_\pm = V \pm \Delta$, the first two constraints characterizing $\mathscr{G}$ ($x \leq 1$ and $y \leq 1$) are automatically preserved since the diagonal entries remain strictly unchanged.

For the third constraint, evaluating at $V_\pm$ yields:
$$ |a_1|^2 x + |a_2|^2 y + 2\Re\bigl(a_1 \bar{a}_2 (z \pm \delta)\bigr) = |a_1|^2 x + |a_2|^2 y + 2\Re(a_1 \bar{a}_2 z) \pm 2\Re(a_1 \bar{a}_2 \delta). $$

Since $V \in \mathscr{G}$, we know $|a_1|^2 x + |a_2|^2 y + 2\Re(a_1 \bar{a}_2 z) \leq 1$. We split the analysis into two cases based on whether this constraint is strict.

\textit{Case A}: Suppose $|a_1|^2 x + |a_2|^2 y + 2\Re(a_1 \bar{a}_2 z) < 1$. We can choose any $\delta \neq 0$ with sufficiently small magnitude $|\delta| < \epsilon$ such that the $\pm 2\Re(a_1 \bar{a}_2 \delta)$ term does not violate the strict inequality. Thus, $V_\pm \in \mathscr{G}$.

\textit{Case B}: Suppose $|a_1|^2 x + |a_2|^2 y + 2\Re(a_1 \bar{a}_2 z) = 1$. Let $\mu := a_1 \bar{a}_2$. We require $\Re(\mu \delta) = 0$ to prevent violating the equality.
\begin{itemize}
  \item If $\mu = 0$, the third constraint for $V_\pm$ reduces to $|a_1|^2 x + |a_2|^2 y = 1$, which is independent of $\delta$. We may choose any $\delta \neq 0$ with $|\delta| < \epsilon$, and $V_\pm \in \mathscr{G}$ holds.
  \item If $\mu \neq 0$, we choose $\delta = i t \bar{\mu}$ for some real number $t$ chosen such that $0 < t < \epsilon / |\mu|$. Then $\mu \delta = i t |\mu|^2$ is purely imaginary, ensuring $\Re(\mu \delta) = 0$. Hence, the third constraint is preserved, and $V_\pm \in \mathscr{G}$.
\end{itemize}

In all cases, we have found a non-zero perturbation $\Delta$ such that $V_+ \in \mathscr{G}$ and $V_- \in \mathscr{G}$. Since $V = \frac{1}{2}(V_+ + V_-)$, this contradicts the assumption that $V$ is an extreme point of $\mathscr{G}$. Therefore, every extreme point of $\mathscr{G}$ must have rank at most one.
\end{proof}

\begin{proof}[Proof of Theorem~\ref{thm-propQ-normA}]
For every $V \geq 0$ with $\|V\|_\epsilon \leq 1$ we must show that $\|V\|_\pi \leq 1$.
As before, by the injective-projective duality, we have
\[\|V\|_\pi = \sup\bigl\{|\langle P,V\rangle|: P\in X^*\otimes X^*,\ \|P\|_\epsilon\le1 \bigr\}.
\]
Fix $P \in X^* \otimes X^*$ with $\|P\|_\epsilon \leq 1$. Since $\mathscr{G}$ is compact and convex, 
the continuous functional $V\mapsto |\langle P,V\rangle|$ attains its maximum on
$\operatorname{Ext}(\mathscr{G})$. By Lemma~\ref{lem:rank-one},
every extreme point of $\mathscr G$ is of the form
$V=v\otimes\bar v$. Hence
\[|\langle P,V\rangle|\leqslant \|P\|_\epsilon\,\|V\|_\pi = \|P\|_\epsilon\,\|V\|_\epsilon \leqslant 1,
\,\, V\in\operatorname{Ext}(\mathscr G),
\]
since the injective and projective norms agree on elementary tensors.
Therefore, 
\[
\sup_{V\in\mathscr{G}}|\langle P,V\rangle| \leqslant 1.
\]
Taking the supremum over all $P$ with $\|P\|_\epsilon\leqslant 1$, we obtain
\[
\|V\|_\pi\leqslant 1\,\,
V\in\mathscr{G}.
\]
Since $\|V\|_\epsilon\le\|V\|_\pi$, it follows that $\|V\|_\pi=\|V\|_\epsilon$
for every $V\geqslant 0$.
\end{proof}

As noted at the beginning of this subsection, every 2-dimensional subspace of $\ell_{\infty}^3$ is isometric to $(\mathbb{C}^2, \|\cdot\|_{\boldsymbol{A}})$ for some choice of $a_1, a_2 \in \mathbb{C}$. Since Theorem~\ref{thm-propQ-normA} establishes Property~Q for all such pairs unconditionally, the following result strengthens \cite[Proposition 4.4]{AFJS95} and follows immediately.

\begin{cor}\label{cor:2dimPropQ}
Every 2-dimensional subspace of $\ell_{\infty}^3$ has Property Q.
\end{cor}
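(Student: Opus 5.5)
The plan is to reduce Corollary~\ref{cor:2dimPropQ} to Theorem~\ref{thm-propQ-normA}. The reduction rests on two facts. First, Property~Q is invariant under isometric isomorphism. Second, every 2-dimensional subspace of $\ell_\infty^3$ is isometric to some $(\mathbb{C}^2,\|\cdot\|_{\boldsymbol A})$.

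For the invariance, let $J:X\to Y$ be a surjective linear isometry. Then $J\otimes J$ is an isometry both for $\|\cdot\|_\epsilon$ and for $\|\cdot\|_\pi$; this follows directly from the definitions \eqref{inj} and \eqref{proj}, using that $J^*$ is an isometry of the duals. One also has to check that $J\otimes J$ preserves non-negative definiteness. The point to note is that $J\otimes J$ sends $x\otimes\bar x$ to $Jx\otimes\overline{Jx}$, where the conjugation is taken with respect to the fixed bases. If $J$ has matrix $M$, then $J\otimes J$ acts on coefficient matrices by $A\mapsto MAM^{T}$. This is congruence-positive only after the appropriate identification, and I would handle it by working with the sesquilinear convention $A\mapsto MAM^*$. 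That convention is the one implicit in the paper's description of $A\ge0$ as the convex hull of $x\otimes\bar x$. With it, $A\ge0$ implies $MAM^*\ge0$, so positive tensors correspond to positive tensors with equal $\epsilon$- and $\pi$-norms.

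For the parametrization, let $E\subset\ell_\infty^3$ be 2-dimensional. Then $E$ is the kernel of a nonzero functional $(c_1,c_2,c_3)$. After permuting coordinates, which is an isometry of $\ell_\infty^3$, we may assume $c_3\neq0$. Then $E=\{(z_1,z_2,a_1z_1+a_2z_2)\}$ with $a_i=-c_i/c_3$. The map $(z_1,z_2)\mapsto(z_1,z_2,a_1z_1+a_2z_2)$ is the embedding $\iota$ from the proof of Lemma~\ref{lem:dual-ball-and-contraction}, and it is an isometry from $(\mathbb{C}^2,\|\cdot\|_{\boldsymbol A})$ onto $E$. This is exactly the basis $\{(1,0,a_1),(0,1,a_2)\}$ mentioned in the text.

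Combining the two facts with Theorem~\ref{thm-propQ-normA} yields $\|B\|_\pi=\|B\|_\epsilon$ for every positive $B\in E\otimes E$. The main obstacle is the positivity-compatibility point above, namely that non-negative definiteness is defined through a basis and must be shown to be intrinsic. I would settle it by noting that the cone generated by $x\otimes\bar x$ is basis-independent once the conjugation convention is fixed.
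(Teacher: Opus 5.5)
Your reduction is the paper's own: identify $E$ with $(\mathbb{C}^2,\|\cdot\|_{\boldsymbol A})$ through $\iota$ after a coordinate permutation, then invoke Theorem~\ref{thm-propQ-normA}. Your kernel-of-a-functional parametrization is correct, and it shows that only coordinate permutations are needed.

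The problem is your first ``fact''. Over $\mathbb{C}$, Property~Q is not invariant under isometric isomorphism, and your proposed repair does not fix this. Positivity is defined through coordinates. So $J\otimes J$ acts by $A\mapsto MAM^{T}$ and sends $x\otimes\bar x$ to $Jx\otimes J\bar x$. This equals $Jx\otimes\overline{Jx}$ only when $M$ is real. Switching to $A\mapsto MAM^{*}$ means replacing $J\otimes J$ by $J\otimes\bar J$, where $\bar J$ has matrix $\bar M$. That map need not preserve $\|\cdot\|_\epsilon$ or $\|\cdot\|_\pi$ unless $\bar J$ is also an isometry, which fails in general. Compare the conjugation constant $\kappa$ in the appendix.

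Here is a concrete counterexample. In $\ell_\infty^2(\mathbb{C})$ take the basis $f_1=(1,1)$, $f_2=(0,i\sqrt2)$. The tensor $B=f_1\otimes f_1+f_2\otimes f_2$ has coefficient matrix $I_2\geq 0$ in this basis. In standard coordinates it is $\bigl(\begin{smallmatrix}1&1\\1&-1\end{smallmatrix}\bigr)$, so $\|B\|_\epsilon=1$. Pair it with $S=\frac{1}{2\sqrt2}\bigl(\begin{smallmatrix}1&1\\1&-1\end{smallmatrix}\bigr)$, whose complex $\infty\to1$ norm is $1$. This gives $\|B\|_\pi\geq\sqrt2$. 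Yet $\ell_\infty^2$ has Property~Q in its standard basis. For the same reason, your remark that the cone generated by the $x\otimes\bar x$ is basis-independent once the conjugation is fixed is circular: the conjugation is what the basis determines.

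Your argument survives because no general invariance is needed. The only maps you use are a coordinate permutation, which has a real matrix, and $\iota$. The matrix of $\iota$ with respect to the standard basis of $\mathbb{C}^2$ and the basis $\{(1,0,a_1),(0,1,a_2)\}$ of $E$ is the identity. For these maps, $J\otimes J$ acts on coefficient matrices by $A\mapsto A$ or $A\mapsto PAP^{T}$. This trivially preserves positivity and both norms, so Theorem~\ref{thm-propQ-normA} transfers directly.

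You should therefore drop the general invariance claim. State instead that $E$ carries the basis $\{(1,0,a_1),(0,1,a_2)\}$, up to a coordinate permutation, which is the convention the paper uses implicitly. The counterexample shows that the corollary can fail for other bases of the same subspace, so this convention is needed.
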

\begin{rem}
   The diagonal hypothesis on $(A_1, A_2)$ is essential. If instead $A_1, A_2$ are $2 \times 2$ matrices that are not simultaneously diagonalizable, then $(\mathbb{C}^2,\|\cdot\|_{\mathbf{A}})$  fails Property~P, and a fortiori Property~Q as shown in \cite{MPV}.
\end{rem}
\appendix
\section{More on Property~Q}
It is natural to view Property Q from the perspective of \emph{norming sets}. 
Recall that the projective tensor norm on $X \otimes X$ 
is given by the dual formula (by the duality $(X\otimes_\pi X)^* = X^* \otimes_\epsilon X^*$, see \cite[\S 6.4]{DF})
$$
\|B\|_\pi = \sup\{ |\langle A,B\rangle| : \|A\|_\epsilon \le 1 \},
$$
where $A$ ranges over $X^*\otimes_\epsilon X^*$. Thus the unit ball of 
$X^*\otimes_\epsilon X^*$ serves as a \emph{norming set} for the projective tensor product.

Property Q provides a simplification of this dual formula. If a space possesses Property Q, it implies 
that for any non-negative definite element $B \geqslant 0$, one can restrict the supremum to the 
\emph{non-negative} part of the injective unit ball up to a constant determined by the conjugation, 
with exact restriction when the conjugation is isometric.

Let $X$ be a finite-dimensional normed linear space over $\mathbb C$ with a distinguished basis 
$e_1,\dots,e_n$, and let $e_1^{*},\dots,e_n^{*}$ be the dual basis of $X^{*}$. For $x\in X$, 
let $\bar x$ denote the coordinatewise conjugate vector, and for $\mu\in X^{*}$, let $\bar\mu\in X^{*}$ 
be the functional with conjugated coefficients, $\bar\mu(e_k):=\overline{\mu(e_k)}$, 
or equivalently $\bar\mu(x)=\overline{\mu(\bar x)}$. An element $B=\sum_{j,k}B_{jk}\,e_j\otimes e_k$ 
of $X\otimes X$ is \emph{non-negative definite}, written $B\geqslant 0$, if its coefficient 
matrix $\big(B_{jk}\big)$ is non-negative definite. For $X^{*}\otimes X^{*}$, this non-negativity is equivalent to a representation $A=\sum_{s}\mu_s\otimes\bar\mu_s$ with $\mu_s\in X^{*}$. This is the cone of non-negative elements used in \cite{BM}: the representation $A=\sum_s\mu_s\otimes\bar\mu_s$ is the factorization
$A=B^{*}B$ of \cite{BM}, and it agrees with the description there as the
convex hull of the tensors $x\otimes\bar x$. (Over $\mathbb C$ the latter
should be read with the conjugate in the second factor; the un-conjugated
symmetric tensors $x\otimes x$ give the same cone precisely when the
conjugation is isometric.) The positive cone of contractions is defined as 
\[
\mathcal{C}=\{\,A\in X^{*}\otimes X^{*} : A\geqslant 0,\ \|A\|_\epsilon\leqslant 1\,\}.
\]

The map $\mu\mapsto\bar\mu$ is a (real-linear) involution of $X^{*}$, but it need not 
be an isometry. Set
\[
\kappa \;:=\; \sup\{\,\|\bar\mu\|_{X^{*}} : \mu\in X^{*},\
\|\mu\|_{X^{*}}\leqslant 1\,\}.
\]
Since $\|\mu\|=\|\bar{\bar\mu}\|\leqslant\kappa\|\bar\mu\|\leqslant \kappa^{2}\|\mu\|$, 
we have $1\leqslant\kappa<\infty$, with $\kappa=1$ if and only if $\mu\mapsto\bar\mu$ 
is an isometry of $X^{*}$. The same constant results if the supremum is computed in $X$. Indeed $\|\bar\mu\|_{X^{*}}=\sup\{|\mu(\bar x)|:\|x\|_X\leqslant1\}$, whence 
$\kappa=\sup\{\|\bar x\|_X:\|x\|_X\leqslant1\}$.

\begin{prop}[Quantitative positive norming estimate]\label{prop:positive-norming-kappa}
Assume that $X$ has Property~Q. Then, for every non-negative definite
$B\in X\otimes X$,
\[
\sup_{A\in\mathcal C}\langle A,B\rangle
\;\leqslant\;
\|B\|_\pi
\;\leqslant\;
\kappa\,\sup_{A\in\mathcal C}\langle A,B\rangle .
\]
In particular, if $\kappa=1$, then $\|B\|_\pi=\sup_{A\in\mathcal C}\langle A,B\rangle$.
\end{prop}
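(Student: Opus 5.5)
The plan is to prove the two inequalities separately. The left one uses only duality. The right one combines Property~Q with a Cauchy--Schwarz estimate on the injective norm, in which the conjugation constant $\kappa$ enters exactly once on each side.

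\emph{Lower bound.} Let $A\in\mathcal C$. Then $\|A\|_\epsilon\leqslant 1$, so by the duality $(X\otimes_\pi X)^*=X^*\otimes_\epsilon X^*$ recalled above,
\[
\langle A,B\rangle\leqslant|\langle A,B\rangle|\leqslant\|A\|_\epsilon\|B\|_\pi\leqslant\|B\|_\pi .
\]
Taking the supremum over $A\in\mathcal C$ gives the first inequality. Non-negativity of $B$ makes the pairing real here, but this is not otherwise needed.

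\emph{Upper bound.} By Property~Q, $\|B\|_\pi=\|B\|_\epsilon$, so it suffices to bound $\|B\|_\epsilon$. Since $B\geqslant 0$, write $B=\sum_s x_s\otimes\bar x_s$ with $x_s\in X$. For $\mu,\nu$ in the unit ball of $X^*$,
\[
(\mu\otimes\nu)(B)=\sum_s\mu(x_s)\,\nu(\bar x_s)=\sum_s\mu(x_s)\,\overline{\bar\nu(x_s)} .
\]
Define $Q(\lambda):=\sum_s|\lambda(x_s)|^2$ for $\lambda\in X^*$. Cauchy--Schwarz then gives
\[
|(\mu\otimes\nu)(B)|\leqslant Q(\mu)^{1/2}\,Q(\bar\nu)^{1/2}.
\]

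The key observation is that $Q(\lambda)=\langle\lambda\otimes\bar\lambda,B\rangle$, because $\lambda(x)\bar\lambda(\bar x)=|\lambda(x)|^2$. The tensor $\lambda\otimes\bar\lambda$ is non-negative, by the description of the cone as sums $\sum\mu_s\otimes\bar\mu_s$. Its injective norm is $\|\lambda\|\,\|\bar\lambda\|$, since injective and projective norms agree on elementary tensors. Hence, for $\lambda\neq 0$, the normalized tensor $\lambda\otimes\bar\lambda/(\|\lambda\|\|\bar\lambda\|)$ lies in $\mathcal C$. Setting $S:=\sup_{A\in\mathcal C}\langle A,B\rangle$, this yields
\[
Q(\lambda)\leqslant\|\lambda\|\,\|\bar\lambda\|\,S .
\]
Applying this with $\lambda=\mu$ gives $Q(\mu)\leqslant 1\cdot\kappa\, S$. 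Applying it with $\lambda=\bar\nu$, and using $\bar{\bar\nu}=\nu$, gives $Q(\bar\nu)\leqslant\|\bar\nu\|\,\|\nu\|\,S\leqslant\kappa\, S$. Therefore $|(\mu\otimes\nu)(B)|\leqslant\kappa S$. Taking the supremum over $\mu,\nu$ gives $\|B\|_\epsilon\leqslant\kappa S$, hence $\|B\|_\pi\leqslant\kappa S$. When $\kappa=1$ the two bounds coincide, which gives the stated equality.

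\emph{Main obstacle.} The delicate step is getting $\kappa$ rather than $\kappa^2$. Bounding $|(\mu\otimes\nu)(B)|$ crudely, for instance by putting both factors into a single tensor $\mu\otimes\bar\mu$, would cost a factor $\kappa$ twice. The remedy is to split the pairing with Cauchy--Schwarz into $Q(\mu)$ and $Q(\bar\nu)$ and take square roots, so each factor contributes only $\kappa^{1/2}$. One must also fix the pairing and conjugation conventions carefully, so that $\langle\lambda\otimes\bar\lambda,x\otimes\bar x\rangle=|\lambda(x)|^2$ holds exactly and $\lambda\otimes\bar\lambda$ genuinely lies in the non-negative cone.
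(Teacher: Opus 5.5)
Your proof is correct and is essentially the paper's argument: your $Q(\lambda)=\sum_s|\lambda(x_s)|^2$ is the paper's $\|\zeta_\lambda\|^2$, and the steps match — normalize $\lambda\otimes\bar\lambda$ into $\mathcal C$, then apply Cauchy--Schwarz with $\lambda=\mu$ and $\lambda=\bar\nu$, which is exactly how the paper gets a single factor $\kappa$. One small correction: $\langle A,B\rangle$ is real because both $A$ and $B$ are non-negative, not $B$ alone; indeed $\langle A,B\rangle=\sum_s Q(\mu_s)\geqslant 0$ for $A=\sum_s\mu_s\otimes\bar\mu_s$, which is how the paper justifies the lower bound.
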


\begin{proof}
Since $B$ is non-negative definite, write
$B=\sum_{j,k}\langle b_j,b_k\rangle\,e_j\otimes e_k$ for vectors $b_j$ in
an auxiliary Hilbert space, and for $\lambda\in X^{*}$ set
$\zeta_\lambda:=\sum_{j=1}^{n}\lambda(e_j)\,b_j$. For all
$\lambda,\mu\in X^{*}$,
\begin{equation}\label{eq:bform}
B(\lambda,\mu)
=\sum_{j,k}\langle b_j,b_k\rangle\,\lambda(e_j)\,\mu(e_k)
=\big\langle \zeta_\lambda,\ \zeta_{\bar\mu}\big\rangle .
\end{equation}

For the first inequality, let $A\in\mathcal C$ and write
$A=\sum_s\mu_s\otimes\bar\mu_s$. By \eqref{eq:bform}, and since
$\bar{\bar\mu}_s=\mu_s$,
\[
\langle A,B\rangle
=\sum_s B(\mu_s,\bar\mu_s)
=\sum_s\langle\zeta_{\mu_s},\zeta_{\mu_s}\rangle
=\sum_s\|\zeta_{\mu_s}\|^{2}\;\geqslant\;0 .
\]
Since $\|A\|_\epsilon\leqslant1$, the duality gives
$\langle A,B\rangle\leqslant\|B\|_\pi$; taking the supremum over
$A\in\mathcal C$ yields the first inequality. (Property~Q is not needed for
this half.)

For the second inequality, Property~Q gives $\|B\|_\pi=\|B\|_\epsilon$, so
it suffices to prove
$\|B\|_\epsilon\leqslant\kappa\,S$, where
$S:=\sup_{A\in\mathcal C}\langle A,B\rangle$. For every non-zero
$\nu\in X^{*}$, the normalized functional
$\hat\nu:=\big(\|\nu\|\,\|\bar\nu\|\big)^{-1/2}\,\nu$ satisfies
$\overline{\hat\nu}=\big(\|\nu\|\,\|\bar\nu\|\big)^{-1/2}\,\bar\nu$, so
that $\hat\nu\otimes\overline{\hat\nu}\geqslant0$ and
$\|\hat\nu\otimes\overline{\hat\nu}\|_\epsilon
 =\|\hat\nu\|\,\|\overline{\hat\nu}\|=1$. Hence
$\hat\nu\otimes\overline{\hat\nu}\in\mathcal C$, and by \eqref{eq:bform},
\begin{equation}\label{eq:rankone}
S\;\geqslant\;\langle \hat\nu\otimes\overline{\hat\nu},\,B\rangle =\|\zeta_{\hat\nu}\|^{2}
=\frac{\|\zeta_\nu\|^{2}}{\|\nu\|\,\|\bar\nu\|}\,,
\,\, \nu\in X^{*}\setminus\{0\}.
\end{equation}
Now fix $\lambda,\mu\in X^{*}$ with
$\|\lambda\|_{X^{*}}\leqslant1$, $\|\mu\|_{X^{*}}\leqslant1$. Applying
\eqref{eq:rankone} with $\nu=\lambda$, and noting
$\|\lambda\|\,\|\bar\lambda\|\leqslant\kappa\|\lambda\|^{2}
 \leqslant\kappa$, we obtain
$\|\zeta_\lambda\|^{2}\leqslant\kappa\,S$; applying it with $\nu=\bar\mu$,
and noting $\|\bar\mu\|\,\|\bar{\bar\mu}\|=\|\bar\mu\|\,\|\mu\|
 \leqslant\kappa\|\mu\|^{2}\leqslant\kappa$, we obtain
$\|\zeta_{\bar\mu}\|^{2}\leqslant\kappa\,S$. By \eqref{eq:bform} and
Cauchy--Schwarz,
\[
|B(\lambda,\mu)| =|\langle\zeta_\lambda,\zeta_{\bar\mu}\rangle|
\leqslant\|\zeta_\lambda\|\,\|\zeta_{\bar\mu}\|\leqslant\kappa\,S .
\]
Taking the supremum over the unit balls gives $\|B\|_\epsilon\leqslant\kappa\,S$, 
as required. If $\kappa=1$, the two inequalities of the Proposition coincide.
\end{proof}

\begin{rem} The constant $\kappa$ stems from formulating positivity in $X\otimes X$. 
The conjugate space $\bar X$ is the vector space having the same underlying set and additive structure as $X$, 
but with scalar multiplication defined by
\[
\lambda\cdot\bar x:=\overline{\bar\lambda\,x},\,\, \lambda\in\mathbb C,\ x\in X.
\]
The norm is given by $\|\bar x\|_{\bar X}:=\|x\|_X$.
Thus, the constant $\kappa$ measures the distortion introduced by identifying $X$ with its conjugate space 
$\bar{X}$. The conjugation enters the proof of Proposition~\ref{prop:positive-norming-kappa} at exactly one point, 
the normalization $\|\nu\|\,\|\bar\nu\|$ in \eqref{eq:rankone}; computed in $X^{*}\otimes_\epsilon\overline{X^{*}}$, 
where $\|\bar\nu\|_{\overline{X^{*}}}=\|\nu\|_{X^{*}}$ by definition, this normalization is $\|\nu\|^{2}\leqslant1$, 
and the estimate \eqref{eq:rankone} holds with constant one. Since Property~Q has always been stated for $X\otimes X$, we do not pursue the formulation over $X\otimes\bar{X}$ here; a treatment of the tensor-norm theory 
of $X\otimes\bar{X}$ in its own right, including the appropriate analogue of Property~Q, is deferred.
\end{rem}

\begin{rem}
Since $\mathcal C$ is a compact convex subset of the finite-dimensional
space $X^{*}\otimes_\epsilon X^{*}$, the supremum of the linear functional
$A\mapsto\langle A,B\rangle$ over $\mathcal C$ is attained at an extreme
point of $\mathcal C$, so that
\[
\sup_{A\in\mathcal C}\langle A,B\rangle
=\sup_{A\in\operatorname{Ext}(\mathcal C)}\langle A,B\rangle .
\]
The rank-one elements of $\mathcal C$ are the tensors
$\lambda\otimes\bar\lambda$ with
$\|\lambda\|_{X^{*}}\,\|\bar\lambda\|_{X^{*}}\leqslant1$, and the proof of
Proposition~\ref{prop:positive-norming-kappa} shows that, when $\kappa=1$,
these already compute the injective norm: for every non-negative definite
$B\in X\otimes X$,
\begin{equation}\label{eq:rankone-sup}
\sup_{\|\lambda\|_{X^{*}}\leqslant1}\langle\lambda\otimes\bar\lambda,\,B\rangle=\|B\|_\epsilon .
\end{equation}
Indeed,
$\langle\lambda\otimes\bar\lambda,B\rangle=B(\lambda,\bar\lambda)
\leqslant\|B\|_\epsilon\,\|\lambda\|\,\|\bar\lambda\|
\leqslant\|B\|_\epsilon$, which yields
\[
\sup_{\|\lambda\|_{X^*}\leqslant1} \langle\lambda\otimes\bar\lambda,B\rangle \leqslant
\|B\|_\epsilon.
\]
For the reverse inequality, when $\kappa=1$, the map $\mu\mapsto\bar\mu$ preserves the unit ball 
of $X^*$, and
\[
\|B\|_\epsilon = \sup_{\|\lambda\|,\|\mu\|\leqslant1} |\langle\zeta_\lambda,\zeta_{\bar\mu}\rangle|.
\]
By Cauchy--Schwarz,
\[
|\langle\zeta_\lambda,\zeta_{\bar\mu}\rangle| \leqslant \|\zeta_\lambda\|\,\|\zeta_{\bar\mu}\|
\leqslant \max\{\|\zeta_\lambda\|^2,\|\zeta_{\bar\mu}\|^2\},
\]
and the choice $\mu=\bar\lambda$, admissible since $\|\bar\lambda\|=\|\lambda\|$, attains $\|\zeta_\lambda\|^2$. Hence
\[
\|B\|_\epsilon \leqslant \sup_{\|\lambda\|_{X^*}\leqslant1} \langle\lambda\otimes\bar\lambda,B\rangle.
\]
Combining the two inequalities yields \eqref{eq:rankone-sup}.

For $\kappa>1$, the constant in Proposition~\ref{prop:positive-norming-kappa} arises from the
normalization of the rank-one tensors $\lambda\otimes\bar\lambda$: membership of $\lambda\otimes\bar\lambda$
in $\mathcal C$ requires $\|\lambda\|\,\|\bar\lambda\|\leqslant1$. Thus the factor $\kappa$ reflects the 
behaviour of these rank-one positive contractions rather than the passage from $\mathcal C$ to its extreme 
points. Whether higher-rank elements of $\mathcal C$ can remove the factor $\kappa$ remains open.
\end{rem}
Property Q is strictly stronger than requiring C to be a norming set. To quantify this gap, we recall Property P from \cite{BM}.
This property is naturally expressed in terms of the constant $\gamma^{+}(X)$, defined as the smallest constant such that
\[
\langle A,B\rangle \;\leqslant\; \gamma^{+}(X)\, \|A\|_{X^{*}\otimes_\epsilon X^{*}}\,\|B\|_{X\otimes_\epsilon X}
\]
for all non-negative definite $A\in X^{*}\otimes X^{*}$ and $B\in X\otimes X$.
In \cite{BM} the pairing is the Hilbert--Schmidt inner
product $\langle A,B\rangle_{\mathrm{HS}}=\sum_{j,k}A_{jk}\overline{B_{jk}}$;
since a non-negative definite $B$ is Hermitian, one has
$\langle A,B\rangle_{\mathrm{HS}}=\langle A,B^{\mathrm t}\rangle$, and the
transpose preserves both non-negative definiteness and the injective norm,
so the constant $\gamma^{+}(X)$ is insensitive to which of the two pairings
is used. We work with the bilinear pairing, consistent with the dual
formula for $\|\cdot\|_\pi$. Note that the symmetry of the pairing gives
$\gamma^{+}(X)=\gamma^{+}(X^{*})$.

\begin{defn}\label{defn:propP}
A finite-dimensional normed linear space $X$ has \emph{Property~P} if
$\gamma^{+}(X)=1$, that is,
$\langle A,B\rangle\leqslant\|A\|_\epsilon\|B\|_\epsilon$ for all
non-negative definite $A\in X^{*}\otimes X^{*}$ and $B\in X\otimes X$.
\end{defn}

The next corollary shows that Property~Q may be viewed as the conjunction of
Property~P and the requirement that the positive cone of contractions norm the
non-negative definite tensors.

\begin{cor} \label{cor:Q_equivalence}
Assume that $\kappa=1$. Then Property~Q holds if and only if Property~P holds and $\mathcal C$ 
is a norming set for the non-negative definite elements.  The hypothesis $\kappa=1$ is used only 
in the forward implication; the converse holds for arbitrary $\kappa$.
\end{cor}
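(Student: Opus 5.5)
We prove the two implications separately. Throughout we use the duality $\|B\|_\pi=\sup\{|\langle A,B\rangle|:\|A\|_\epsilon\leqslant 1\}$ and the fact that $\mathcal C$ being a norming set for the non-negative definite elements means $\|B\|_\pi=\sup_{A\in\mathcal C}\langle A,B\rangle$ for every $B\geqslant 0$. Only the "if" direction uses the positive cone in an essential way.

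\emph{Forward direction (uses $\kappa=1$).} Assume Property~Q.

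First, Property~P. For non-negative definite $A\in X^*\otimes X^*$ and $B\in X\otimes X$, duality gives $\langle A,B\rangle\leqslant\|A\|_\epsilon\|B\|_\pi$, and Property~Q turns the right-hand side into $\|A\|_\epsilon\|B\|_\epsilon$. So $\gamma^+(X)=1$; this step does not need $\kappa=1$.

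Second, the norming property. Proposition~\ref{prop:positive-norming-kappa} is stated under Property~Q, and with $\kappa=1$ it gives exactly $\|B\|_\pi=\sup_{A\in\mathcal C}\langle A,B\rangle$ for every $B\geqslant 0$. Hence $\mathcal C$ is a norming set. This is the only place the hypothesis $\kappa=1$ enters.

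\emph{Converse (arbitrary $\kappa$).} Assume Property~P and that $\mathcal C$ norms the non-negative definite elements. Fix $B\geqslant 0$. By the norming hypothesis,
\[
\|B\|_\pi=\sup_{A\in\mathcal C}\langle A,B\rangle .
\]
Each $A\in\mathcal C$ is non-negative definite with $\|A\|_\epsilon\leqslant 1$. Property~P then bounds each term by $\langle A,B\rangle\leqslant\|A\|_\epsilon\|B\|_\epsilon\leqslant\|B\|_\epsilon$. Taking the supremum gives $\|B\|_\pi\leqslant\|B\|_\epsilon$. The reverse inequality $\|B\|_\epsilon\leqslant\|B\|_\pi$ always holds, so $\|B\|_\pi=\|B\|_\epsilon$, which is Property~Q. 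Nothing about $\kappa$ is used here.

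\emph{Main point to check.} The step needing the most care is the precise meaning of "norming set". If it is read with a constant, as the inequality $\|B\|_\pi\leqslant c\,\sup_{A\in\mathcal C}\langle A,B\rangle$, the converse would only give $\rho^+(X)\leqslant c$. So I would fix the definition to mean equality, $c=1$.

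With that reading, the forward direction rests entirely on the second inequality of Proposition~\ref{prop:positive-norming-kappa}. It is worth noting explicitly why that proposition needs $\kappa=1$. The rank-one tensors $\hat\nu\otimes\overline{\hat\nu}$ must lie in $\mathcal C$, which requires the normalization $\|\nu\|\,\|\bar\nu\|\leqslant 1$. That normalization costs nothing precisely when conjugation is an isometry of $X^*$, i.e.\ when $\kappa=1$.
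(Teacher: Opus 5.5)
Your proposal is correct and follows the paper's proof essentially step for step. You obtain Property~P from duality plus Property~Q, get the norming property from Proposition~\ref{prop:positive-norming-kappa} with $\kappa=1$, and prove the converse via the chain $\|B\|_\pi=\sup_{A\in\mathcal C}\langle A,B\rangle\leqslant\|B\|_\epsilon\leqslant\|B\|_\pi$; reading ``norming set'' as equality is also exactly how the paper uses it.
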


\begin{proof}
If Property~Q holds, then $\|B\|_\pi=\|B\|_\epsilon,\,\, B\geqslant 0$.
Hence, by Proposition \ref{prop:positive-norming-kappa},
\[
\|B\|_\pi = \sup_{A\in\mathcal C}\langle A,B\rangle, \,\, B\geqslant 0.
\]
Moreover, for $A,B\geqslant 0$,
\[
\langle A,B\rangle \leqslant \|A\|_\epsilon\,\|B\|_\pi = \|A\|_\epsilon\,\|B\|_\epsilon,
\]
and therefore Property~P holds.

Conversely, suppose that Property~P holds and that $\|B\|_\pi = \sup_{A\in\mathcal C}\langle A,B\rangle,\,\, B\geqslant 0$.
Then
\[
\|B\|_\pi = \sup_{A\in\mathcal C}\langle A,B\rangle \leqslant \|B\|_\epsilon,
\]
since $\|A\|_\epsilon\le1$ for $A\in\mathcal C$ and Property~P applies.
As always, $
\|B\|_\epsilon\le\|B\|_\pi$,
and hence
\[
\|B\|_\pi=\|B\|_\epsilon,\,\,B\geqslant 0.
\]
Thus Property~Q holds. 
\end{proof}
Property~P and the norming condition of Corollary~\ref{cor:Q_equivalence} are independent hypotheses: For
$\ell_2^n$ ($n\geqslant2$), the cone $\mathcal C$ norms the non-negative definite elements, the supremum 
$\sup_{A\in\mathcal C}\langle A,B\rangle=\operatorname{tr}(B) =\|B\|_\pi$ being attained at $A=I$, while 
Property~P fails, since $\langle I,I\rangle=n>1=\|I\|_\epsilon^2$. In particular $\ell_2^n$ does not have 
Property~Q.

\section{The missing extreme points in the proof of Fact 7 from \cite{BM}}
Let $A:= \big ( \!\! \big ( \langle v_i , v_j \rangle \big )\!\!\big)_{i,j=1}^3$, where $v_1 = e_1 = v_3$ and $v_2 = 0$.
With this choice of $v_1,v_2, v_3$, 
we have $\|A\|_{\ell_1^3 \to \ell_\infty^3} = 1$, $A$ is non-negative, and the rank of $A$ is $1$. It follows from Corollary \ref{cor-complex-n3} that $A$ is an extreme point of the set $\mathcal{A}_{1 \rightarrow \infty}^{(3)}(\mathbb{C})$ (the set $\mathcal{X}$ of \cite{BM}). In the proof of Fact 7 of \cite{BM}, such rank-1 extreme points were erroneously excluded under the assumption that all the diagonal entries of an extreme point of $\mathcal{X}$ must be $1$. 

Let $B:\ell_\infty^3 \to \ell_1^3$ be a self-adjoint linear transformation of the form: 
\[
B = \begin{pmatrix}  
r & \overline{\lambda_1} & \overline{\lambda_2} \\
\lambda_1 & s & \overline{\lambda_3} \\
\lambda_2 & \lambda_3 & t
\end{pmatrix} = \begin{pmatrix}
1 & 0 & 1 \\
0 & -1 & 0 \\
1 & 0 & 1
\end{pmatrix},
\]
where we have chosen $r=1=t$, $s=-1$, $\lambda_1 = 0 = \lambda_3$, and $\lambda_2=1$. With this choice, we have  
\begin{align*}
|\langle A , B \rangle|  &= |r \|v_1\|^2 + s \|v_2\|^2 + t \|v_3\|^2 +  2 \operatorname{Re}\big (\lambda_1 \langle v_2, v_1 \rangle + \lambda_2 \langle v_3, v_1 \rangle + \lambda_3 \langle v_3, v_2 \rangle \big )|\\
&= 4.
\end{align*}
Next, we evaluate the supremum over the torus $z = (z_1, z_2, z_3) \in \mathbb{T}^3$.  Setting $z_j = e^{i\theta_j}$, we obtain:
\begin{align*}
\sup_{z\in \mathbb{T}^3} |\langle B z, z \rangle| &= \sup_{z\in \mathbb{T}^3} | z_1\overline{z_1} - z_2\overline{z_2} + z_3\overline{z_3} + z_1\overline{z_3} + z_3\overline{z_1} | \\
&= \sup_{\theta_1, \theta_3 \in [0, 2\pi)} |1 + 2 \cos (\theta_1-\theta_3)|\\
&= 3.
\end{align*}

Since $|\langle A, B \rangle| = 4 > 3 = \sup_{z\in \mathbb{T}^3} |\langle B z, z \rangle|$, the reduction step in \cite{BM} claiming that it is enough to show $|\langle A, B \rangle| \leq \sup_{z\in \mathbb{T}^3} |\langle B z, z \rangle|$ for all self-adjoint matrices $B$ used in the proof of Fact 7 fails for the rank-1 extreme point $A\in \mathcal{X}$. 

However, $|\langle A, B \rangle| = 4 < \|B\|_{\ell_\infty^3 \to \ell_1^3}$ since the operator norm $\|B\|_{\ell_\infty^3 \to \ell_1^3}$ is evidently at most $5$ and is achieved at $x = (1, -1, 1)^T$ yielding $\|B\|_{\ell_\infty^3 \to \ell_1^3} = 5$. Thus, the ultimate goal of the proof of Fact 7—showing $|\langle A, B \rangle| \leq \|B\|_\epsilon = \|B\|_{\ell_\infty^3 \to \ell_1^3}$ for any extreme points $A$ of $\mathcal{X}$—remains unbroken by this choice of $A$ and $B$. Indeed, Theorem \ref{thm-propQ-linf3} verifies this inequality. 

\section{Proof of Lemma \ref{norm:lem 2.17}}
For any pair of real numbers $a, b$, let $A$ be the $2\times 2$ matrix: 
$\Big ( \begin{matrix} a & re^{i\theta} \\ re^{-i\theta} & b\end{matrix} \Big )$, where $r \geqslant 0$ and $0 \leqslant \theta < 2\pi$. We wish to compute $\Big \| \Big ( \begin{matrix} a & re^{i\theta} \\ re^{-i\theta} & b\end{matrix} \Big )\Big \|_{\infty\to 1}$.  First, note that 
\begin{align*}
\Big \|\Big ( \begin{matrix} a & re^{i\theta} \\ r e^{-i\theta} & b\end{matrix} \Big ) \Big ( \begin{matrix} e^{i \psi}\\e^\varphi \end{matrix} \Big )  \Big\|_1 & = 
\big |a e^{i\psi} + r e^{i(\theta+\varphi)} \big | + \big |r e^{i(- \theta+\psi)} + b e^{i\varphi} \big |\\
&=\big | a + re^{i(\theta + \varphi - \psi)}\big | + \big | r + b e^{i(\theta + \varphi - \psi)}\big |\\
&= \big | a + r \cos x + i \sin x \big |  + \big | r + b \cos x + i \sin x\big | \\
&= \sqrt{a^2 + r^2 + 2 a r \cos x} + \sqrt{r^2 + b^2 + 2 r b \cos x}, 
\end{align*}
where $x=\theta + \varphi - \psi$. The extremal problem we have to solve is 
the following: 
\[\sup_{x\in [0,2\pi)} f(x), \quad \text{\rm where~} f(x) = \sqrt{a^2 + r^2 + 2 a r \cos x} + \sqrt{r^2 + b^2 + 2 r b \cos x}.\]
The critical points of $f$ are the zeros of the derivative 
\[f^\prime(x) = r \sin x \Big (\frac{a}{\sqrt{a^2 + r^2 + 2 a r \cos x}} + \frac{b}{\sqrt{r^2 + b^2 + 2 r b \cos x}} \Big ).\]
Assume that $r\ne 0$, otherwise, $\|A\|_{\infty\to 1} = |a| + |b|$. If $a,b$ are of the same sign, then 
$f^\prime(x) = 0$ if and only if  $\sin x = 0$, i.e., $x=0$ or $x=\pi$. If $a, b$ are of opposite sign, then apart from the two solutions, $x=0, \pi$,  $x$ such that 
\[b\sqrt{a^2 + r^2 + 2 a r \cos x} + b \sqrt{r^2 + b^2 + 2 r b \cos x} = 0\]
is also a critical point of $f$. This critical point occurs when 
\[\cos x = \frac{-r(a+b)}{2 a b}.\]
Clearly, $\|A\|_{\infty\to 1} \leqslant |a| + |b| + 2 r$. 
If $a, b$ are of the same sign, taking $x=0$, we see that $\|A\|_{\infty\to 1} = |a| + |b| + 2 r$. 

If $a, b$ are both not zero and are of opposite signs, then  $\|A\|_{\infty\to 1} < |a| + |b| + 2 r$. So, the critical point $0$ cannot be a point of maximum for $f$. Therefore, in this case, the maximum of $f$ can be either at 
$x=\pi$, or at an $x$ such that $\cos x = \frac{-r(a+b)}{2 a b}$. 

Assume without loss of generality that $a$ is non-negative and $b = - s$, $s\geqslant 0$. Then 
$\cos x=-r(a+b) /(2 a b)=r(a-s) /(2 a s)$. 
In this case: 

\begin{subequations}
    \begin{align}
f(x)_{|x=0} = |s - r| + (r + a) &=\begin{cases}  s + a & \text{if~} s \geq r\\
    2r + a -s &\text{if~} s \leq r.
    \end{cases}\\
f(x)_{|x=\pi} =s + r + |a - r| &= \begin{cases}  s + a & \text{if~} a \geq r\\
    2r + s - a &\text{if~} a \leq r. \end{cases}\\
    f(\cos\,x)_{| \cos \, x = \frac{-r(a+b)}{2 a b}} &= \sqrt{\tfrac{a}{s}(as+ r^2)} + \sqrt{\tfrac{s}{a}(as+ r^2)} \nonumber \\
    &=\sqrt{\frac{as+r^2}{as}} (a+s).
    \end{align}
\end{subequations}
Thus, the maximum possible value of $f$ is one of the following 
\[a+s, 2r+a-s, 2r+s-a, \sqrt{\frac{as+r^2}{as}} (a+s).\]
However, $\sqrt{\tfrac{as+r^2}{as}} (a+s) \geq a + s$. Therefore, $a + s$ is not a maximum of $f$. 

Suppose that $a=s$. Then $\sqrt{\frac{as+r^2}{as}} (a+s) > 2 r$. Therefore, in this case, the maximum of $f$ is $ 2 s \sqrt{\frac{s^2+r^2}{s^2}} $.

Now, suppose that $s\geq r$, then the maximum of $f$ is 
\[\max \{ 2r + s - a = 2(r-s) + (a+s), \sqrt{\tfrac{as+r^2}{as}} (a+s)\}.\]
Therefore, maximum of $f$ is either $2r + s - a$ or $\sqrt{\tfrac{as+r^2}{as}} (a+s)$ according as 
\[2(r - s) \geq \big (\sqrt{1+ \tfrac{1}{as}} - 1\big ) (a+s)\] 
or not. A similar computation applies to the case: $a \geq r$.   

Finally, if $s \leq r$, then there are two cases: either $a \geq s$ or $a \leq s$. Choose, for instance, $a \geq s$. Then $2r + a - s \geq 2r + s - a $. Hence the maximum of $f$ is $\max\{2r + a - s,  \sqrt{\tfrac{as+r^2}{as}} (a+s)\}$. It is either $2r + a - s$, or $\sqrt{\tfrac{as+r^2}{as}} (a+s)$ according as 
\[2(r-s) \geq \big (\sqrt{1+ \tfrac{1}{as}} - 1 \big) (a+s)\]
or not.

We get nothing new when $a \leq r$.
\subsubsection*{\sf Remark:}
If $A$ is of the form $\Big ( \begin{matrix} a & re^{i\theta} \\ re^{-i\theta} & -a\end{matrix} \Big )$, $a>0$, the norm $\big \| \Big ( \begin{matrix} a & re^{i\theta} \\ re^{-i\theta} & -a\end{matrix} \Big )\big \|_{\infty \to 1}$ is given by the formula: 
\[\Big \| \Big ( \begin{matrix} a & re^{i\theta} \\ re^{-i\theta} & -a\end{matrix} \Big )\Big\|_{\infty \to 1} = 2 a \sqrt{\big ( 1+\tfrac{r^2}{a^2} \big )}.\]

{\small\subsubsection*{Acknowledgment} 
The first named author acknowledges the funding received through ANRF ARG grant {\tiny ANRF/ARG/2025/001228/MS}.

The second named author would like to thank DST, Govt. of India for partial financial support in the form of INSPIRE Faculty Fellowship {\tiny DST/INSPIRE/04/2022/001207}.

Gadadhar Misra acknowledges extended conversations with Anthropic's Claude that served as a sounding board in the preparation of this paper. These exchanges were iterative by nature, not always conclusive, and on more than one occasion called for a course correction.  Nonetheless, they helped sharpen the exposition, clarify notation, and refine some of the arguments. The mathematical content, the choice and formulation of results, remain the authors' responsibility. 

The fourth named author acknowledges the funding received through the DST-INSPIRE Faculty Fellowship  
{\tiny DST/INSPIRE/04/2020/001132}, the Prime Minister Early Career Research Grant  {\tiny ANRF/ECRG/2024/000699/PMS}
and the ANRF ARG MATRICS grant {\tiny ANRF/ARGM/2025/000895/MTR}.}

\bibliographystyle{amsplain}

\end{document}